\tikzset{>=latex}
\DeclareFontFamily{U}{wncy}{}
    \DeclareFontShape{U}{wncy}{m}{n}{<->wncyr10}{}
    \DeclareSymbolFont{mcy}{U}{wncy}{m}{n}
    \DeclareMathSymbol{\Sh}{\mathord}{mcy}{"58}
\theoremstyle{plain}
\newtheorem{theorem}{Theorem}[section]
\newtheorem{lemma}{Lemma}
\newtheorem{corollary}{Corollary}
\newtheorem{proposition}{Proposition}
\newtheorem{conj}{Conjecture}
\theoremstyle{definition} \theoremstyle{definition}
\newtheorem{remark}{Remark}
\theoremstyle{remark}
\newcommand{\G}{\textsc{\G}}
\newcommand{\A}{\mathbb{A}}
\newcommand{\PD}{{\rm PD}}
\newcommand{\U}{\mathcal{U}}
\newcommand{\Z}{\mathbb{Z}}
\newcommand{\R}{\mathbb{R}}
\newcommand{\C}{\mathbb{C}}
\newcommand{\Sc}{\mathcal S}
\newcommand{\F}{\mathbb{F}}
\newcommand{\N}{\mathbb{N}}
\newcommand{\Nm}{\mathbb{N}m}
\newcommand{\Hom}{{\rm Hom}}
\newcommand{\SHom}{{\rm SHom}}
\def\G{{\rm G}}
\def\SL{{\rm SL}}
\def\GSp{{\rm GSp}}
\def\Sp{{\rm Sp}}
\def\St{{\rm St}}
\def\SU{{\rm SU}}
\def\U{{\rm U}}
\def\GO{{\rm GO}}
\def\GL{{\rm GL}}
\def\PGL{{\rm PGL}}
\def\Gal{{\rm Gal}}
\def\SO{{\rm SO}}
\def\OO{{\rm O}}
\def\O{{\mathcal O}}
\begin{document}

{
\title{Symplectic  models for  Unitary groups}
\author{Sarah Dijols and Dipendra Prasad}
\address{Aix Marseille Universit\'e, 13453, Marseille, France.  } 
\address{Tata Institute of Fundamental 
Research, Homi Bhabha Road, Bombay - 400 005, India.}
\address{Laboratory of Modern Algebra and Applications, Saint-Petersburg State University, Russia.}

\email{sarah.dijols@univ-amu.fr}
\email{prasad.dipendra@gmail.com}
\keywords{Unitary groups, Symplectic models, Whittaker models, Weil representation, Theta correspondence}

\maketitle

{\hfill \today}
\begin{abstract}
In analogy with the study of representations of $\GL_{2n}(F)$ distinguished by $\Sp_{2n}(F)$, where $F$ is a 
local field, 
we study representations of $\U_{2n}(F)$ distinguished by $\Sp_{2n}(F)$ in this paper. 
(Only quasi-split unitary groups are considered in this paper since they are the only ones which contain $\Sp_{2n}(F)$.) 
We prove that 
there are no cuspidal representations  of $\U_{2n}(F)$ distinguished by $\Sp_{2n}(F)$ for $F$ a 
non-archimedean local field. We also prove the corresponding 
global theorem that 
there are no cuspidal representations  of $\U_{2n}(\A_k)$ with nonzero period integral on  $\Sp_{2n}(k) \backslash \Sp_{2n}(\A_k)$
for $k$ any number field or a function field.  We  completely classify representations of quasi-split unitary group 
in four variables over local and global fields
with nontrivial symplectic periods using
methods of theta correspondence. We propose a conjectural answer for the classification of  all representations of 
a quasi-split unitary group distinguished by $\Sp_{2n}(F)$. 
\end{abstract}

\begin{tableofcontents}
\end{tableofcontents}
\section{Introduction}

Among the many examples studied about automorphic representations of $G(\A)$ which have nonzero period integrals (where $\A$ is the adele ring of a number field $k$):
$$\int_{H(k)\backslash H(\A)}f(h) dh \not \equiv 0,$$
for $f \in \Pi$, an automorphic representation of $G(\A)$, for $G$ a reductive algebraic group over the number field $k$, and $H$ an algebraic subgroup of $G$ defined over $k$, one of the most complete and beautiful works is due to O. Offen and E. Sayag about symplectic periods of automorphic forms on $\GL_{2n}(\A)$ 
(for $H(\A) = \Sp_{2n}(\A)$), cf. [OS1] and [OS2] for both local and global results for the pair $(\GL_{2n},\Sp_{2n})$.

One of the early results on symplectic periods is due to Heumos and Rallis cf. [HR], who proved that there are no cuspidal representations of $\GL_{2n}(\A)$ with nonzero symplectic period since in fact there are no generic representations of $\GL_{2n}(F)$ which are distinguished by $\Sp_{2n}(F)$, for $F$ a non-archimedean local field. (For a subgroup $H$ of a  group
$G$, a representation $\pi$ of $G$ is said to be distinguished by $H$ if there exists a nonzero linear form $\ell: \pi \rightarrow \C$ such that $\ell(hv) = \ell(v)$ 
for all $h \in H$, and $v \in \pi$.) 

In analogy with works on symplectic periods of automorphic forms on $\GL_{2n}(\A)$, one can consider similar questions by replacing $G= \GL_{2n}$ by $G=\U_{2n}$,
a unitary group defined by a hermitian form on a $2n$-dimensional vector space $V$ over $K$, where $K$ is a quadratic extension of a global field 
$k$.

Observe that 
$$\Sp_{2n}(F) \subset \U_{2n}(F),$$
  when one takes the unitary group in $2n$-variables over $F$ which is quasi-split over $F$, and splits over a quadratic extension $E/F$. 
For example, let 
  $$A = \left ( \begin{array}{ccccccc} 
  && & & & & i \\
& & &  & & i & \\
& & & & i & & \\
& & &  * &  & & \\
& &  -i &  & & & \\
&  -i &  & & & & \\
-i & & & & &  & 
\end{array} \right )
$$
where $i \in E^\times$ with $\bar{i} = -i$. The matrix $A$ is hermitian, but $iA$ is symplectic, and therefore the unitary group
defined by $A$ contains the symplectic group defined by $iA$.  
 
 Since we now have $\Sp_{2n}(F) \subset \U_{2n}(F)$, it is a meaningful question to consider representations on $\U_{2n}(F)$ which are distinguished by $\Sp_{2n}(F)$, or  automorphic representations 
 of $\U_{2n}(\A)$ which have nonzero period integral on  $\Sp_{2n}(k)\backslash \Sp_{2n}(\A)$. In fact this question is already considered by Lei Zhang who proved, cf. Theorem 1.1 in [Zh1] that $(\U_{2n}(F), \Sp_{2n}(F))$ is a Gelfand pair, and hence the space of $\Sp_{2n}(F)$-invariant linear forms on any irreducible admissible representation of $\U_{2n}(F)$ is at most one dimensional, for $F$ any local field. In [Zh2] Zhang further proved that there are no tame supercuspidal representations of $\U_{2n}(F)$ distinguished by 
 $\Sp_{2n}(F)$.

 In this work, 
we prove that 
there is no cuspidal representation  of $\U_{2n}(F)$ distinguished by $\Sp_{2n}(F)$ for $F$ a 
non-archimedean local field --- thus completing the work of Lei Zhang. 
We also prove the corresponding 
global theorem that 
there are no cuspidal representations  of $\U_{2n}(\A_k)$ with nonzero period integral on  $\Sp_{2n}(k) \backslash \Sp_{2n}(\A_k)$
for $k$ any number field or a function field. 

The proof we give for the non-existence of cuspidal representations 
of $\U_{2n}(F)$ distinguished by $\Sp_{2n}(F)$ also proves that there are
no cuspidal representations  of $\GL_{2n}(F)$ distinguished by $\Sp_{2n}(F)$, thus giving another proof of the theorem
of Heumos and Rallis. Our proof  in fact has consequences for representations  of $\SL_{2n}(F)$ distinguished by $\Sp_{2n}(F)$ 
about which we make a general conjecture and prove it in some cases.
We also propose a conjectural answer for the classification of  all representations of 
a quasi-split unitary group with symplectic period.

We completely classify representations of quasi-split unitary group 
in four variables over local and global fields
with nontrivial symplectic periods using
methods of theta correspondence.

 Our analysis with theta correspondence uses relationship of $\U_4(F)$ with  a certain orthogonal group in 6 variables, and symplectic group $\Sp_4(F)$ with a certain orthogonal group in 5 variables; especially 
the first identification seems not so standard, so we have taken some pains to elaborate on these. 
\section{Notation} 
We will use $F$ to denote either a general field, or a local field, and $k$ will be used to denote a global field 
(i.e., a number field or a function field). 
If $F$ is a local field, it will always come equipped
with a fixed non-trivial additive character $\psi: F \rightarrow \C^\times$. For a global field $k$, we will let $\A = \A_k$ denote its adele ring, and we will always fix a non-trivial additive character $\psi_0:\A_k/k \rightarrow \C^\times$.

Given a vector space $V$ over a field $F$, we will let $V^\vee$ denote the dual vector space over $F$. 
If $F$ is a local field with a fixed non-trivial character $\psi: F \rightarrow \C^\times$, 
observe that the dual vector space $V^\vee$ can also be identified to the set of all characters
$\widehat{V}$ of $V$ (the Pontryagin dual):
$$\ell \in V^\vee \longrightarrow \hat{\ell} \in \widehat{V},$$
 defined by $$\hat{\ell}(v) = \psi(\ell(v)).$$

For example, for a symplectic vector space $W=X+ X^\vee$, with $X$ and $X^\vee$ maximal isotropic subspaces of $W$, let $P_X$ be the Siegel parabolic 
in $\Sp(W)$ stabilizing $X$ with unipotent radical $N_X$ which is the vector space of  symmetric 
 elements  $ \{\phi \in {\Hom}(X^\vee,X)| \phi =
\phi^\vee\} \cong {\rm Sym}^2X.$ If we denote the set of symmetric 
 elements of $ {\Hom}(X^\vee,X)$ by  $ {\SHom}(X^\vee,X)$, then 
the natural non-degenerate pairing:

$$\xymatrix{ 
{\Hom(X^\vee, X) \times \Hom(X, X^\vee)} \ar@/^{1.5pc}/[rr] \ar[r] & \Hom(X^\vee, X^\vee) \ar[r]_-{\rm tr} & F,} $$
gives rise to a non-degenerate pairing:
$$\SHom(X^\vee, X) \times \SHom(X, X^\vee) \longrightarrow F,$$
identifying the dual of $\SHom(X^\vee, X)$ to $\SHom(X, X^\vee)$, and therefore, the character group of  
$\SHom(X^\vee, X)$ is identified to $\SHom(X, X^\vee)$ (the identification of course depends on the choice of the non-trivial character 
$\psi: F \rightarrow \C^\times$ which will be fixed throughout the paper).

If $(V, q)$ is a quadratic space over a field $F$, $\OO(V)$ denotes the associated orthogonal group over $F$. 
We will use the notation $\OO(m,n)$, which 
is usually used in the context of real groups, to denote any orthogonal group whose rank over $F$ is $\min\{m,n\}$; the notation $\OO(m,n)$ does not give
full information on the quadratic form, or the isomorphism class of the group, but still carries very useful information specially when dealing with orthogonal groups
which are split or quasi-split, i.e., $\OO(m,n)$ with $|m-n| \leq 2$. If the orthogonal group is $\OO(m,m+2)$, then it is a quasi-split group over $F$, 
split by a unique quadratic field extension  of $F$; for us this quadratic extension will always be $E$, the quadratic extension of $F$ involved in defining
the hermitian form underlying our unitary groups.

We will similarly denote unitary groups by $\U(m,n)$ to be any unitary group whose
$F$-rank is $\min\{m,n\}$. We will use the notation $\OO(m), \U(m)$ to denote any orthogonal or unitary group defined by a quadratic or hermitian space of dimension
$m$, or $\OO_m(F)$, $\U_m(F)$ if we want to be explicit about $F$.

Given a vector space $V$ over $F$ together with a quadratic form $q:V \rightarrow F$, and $a \in F^\times$, we will abbreviate $a\cdot V$ to be the quadratic space
with $V$ as the underlying vector space, and $a\cdot q$ as the quadratic form on $V$. Note that although $\OO(a\cdot V) = \OO(V)$, for  considerations in this
paper  dealing  with the theta correspondence, it will be important to treat $a\cdot V$ as a different quadratic space from $V$ with $a\cdot V$ isomorphic to $V$ if and only
if there is an automorphism $g$ of $V$ such that $q(gv) = a\cdot q(v)$ for all $v \in V$, i.e., $a\cdot V \cong V$ as quadratic spaces if and only if $a$ is a similitude factor for $\GO(V)$. For example, if $E$ is a separable quadratic extension of a field $F$, then
$E$ considered as a two dimensional vector space over $F$  carries the quadratic form $q = \Nm$ where $\Nm(e) = e\bar{e}$. Then for $a \in F^\times$, 
the quadratic space $a \cdot E$ is isomorphic to $E$ if and only if $a \in \Nm(E^\times)$. 

\section{Clifford theory \`a la Bernstein-Zelevinsky}

This section written for the purposes of the next section, develops Clifford theory for smooth 
representations of locally compact totally disconnected
 groups. We recall that Clifford theory in the context of finite groups 
describes irreducible representations of a finite group $G$ in the presence of a normal subgroup 
$N$ of $G$, and takes an especially simple form when $N$ is an abelian normal subgroup, and  $G$ can be written as a semi-direct product $G=N \rtimes H$; see for example, Proposition 25 in  [Se]. 
We have not seen a general form of Clifford theory for smooth representations
of a locally compact totally disconnected group, but Bernstein-Zelevinsky in their analysis of representations of $\GL_n(F)$ restricted to a mirabolic subgroup had to develop such a theory --- at least in this context --- based on rather novel ideas.

Since Bernstein-Zelevinsky's work is written in the specific context of mirabolic subgroups of $\GL_n(F)$, we cannot refer to their theorem, but their method can be adapted to a slightly larger context, which is what we do in this section.

\begin{proposition} Let $G = N \rtimes H$ 
be a locally compact totally disconnected group with $N$ a finite dimensional vector space
over a non-archimedean local field $F$. Let $(\pi,V)$ be a smooth representation of $G$. The group $H$ operates on $N$, and 
hence on $\widehat{N}$, the character group of $N$. For a character $\psi:N\rightarrow \C^\times$, let $$\pi_{N,\psi} = \frac{\pi}{\{n\cdot v - \psi(n)v| n \in N, v \in V\}}, $$
be the twisted Jacquet module of $(\pi,V)$. Observe that $\pi_{N,\psi}$  is a module for $N\rtimes H_\psi$
where $H_{\psi}$ is the stabilizer of $\psi$ in $H$, and that if $\pi_{N,\psi}\not = 0$, 
then so is
$\pi_{N,\psi^h}$, the conjugate of $\psi$ by  any $h \in H$. Assume that for 
$ X= \{\psi \in \widehat{N} | \pi_{N,\psi}\not = 0 \}$, 
there are only finitely many orbits 
of $H$ on $X$. Then there exists a $G$-invariant filtration on $\pi$ whose successive quotients are the representations $\pi_i$ of $G$ 
where the index set $\{i\}$ corresponds to the orbits  of $H$ on $X$,
and the representations $\pi_i$ are 
$$\pi_i \cong {\rm ind}_{N\rtimes H_{\psi_i}}^{N \rtimes H} (\pi_{N,\psi_i}),$$
where $H_{\psi_i}$ is the stabilizer of $\psi_i$ in $H$. (Here, and in the rest of the paper, 
 ${\rm ind}_B^A (V)$ denotes {\it un-normalized}, compactly supported
induction.) 
Further, the open orbits of $H$ on $X$ give rise to submodules of $\pi$, whereas the closed orbits of $H$ on $X$ give rise to quotient representations of $\pi$. 
\end{proposition}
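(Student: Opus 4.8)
The plan is to adapt the Fourier-analytic (``sheaf on $\widehat{N}$'') method that Bernstein--Zelevinsky use for mirabolic subgroups. First I would restrict $(\pi,V)$ to $N$ and let the Schwartz algebra $\mathcal{S}(N)$ of locally constant compactly supported functions act on $V$ by $f\cdot v=\int_N f(n)\pi(n)v\,dn$; smoothness of $\pi$ makes this a non-degenerate $\mathcal{S}(N)$-module. Using the fixed additive character $\psi$ of $F$, the Fourier transform is an algebra isomorphism $\mathcal{S}(N)\xrightarrow{\sim}\mathcal{S}(\widehat{N})$ carrying convolution to pointwise product, so $V$ becomes a non-degenerate module over $\mathcal{S}(\widehat{N})$ with pointwise multiplication. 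Because $G=N\rtimes H$, the action of $H$ on $V$ intertwines its (contragredient) action on $\widehat{N}$ with this module structure, the stabilizer of $\psi$ being $H_{\psi}$ as in the statement.

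Next I would install the standard dictionary between such modules and sheaves on $\widehat{N}$. For an open $W\subseteq\widehat{N}$ put $V(W)=\mathcal{S}(W)\cdot V$; this is a sub-$\mathcal{S}(\widehat{N})$-module, and one checks $V(\widehat{N})=V$, $V(W_1\cup W_2)=V(W_1)+V(W_2)$ and, crucially, $V(W_1\cap W_2)=V(W_1)\cap V(W_2)$, so that $V=V(W)\oplus V(\widehat{N}\setminus W)$ for $W$ compact open. A direct computation, using that an element of $\mathcal{S}(\widehat{N})$ vanishing at a point vanishes in a neighbourhood of it, identifies the ``stalk'' $V/V(\widehat{N}\setminus\{\chi\})$ with the twisted Jacquet module $\pi_{N,\chi}$; hence the support of the associated sheaf is precisely $X=\{\chi:\pi_{N,\chi}\neq 0\}$, a smooth $N$-module with empty support vanishing, and $X$ being an $H$-stable closed subset of $\widehat{N}$ on which all of this is $H$-equivariant.

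Now I would use the hypothesis. Write $X=O_1\sqcup\cdots\sqcup O_r$ for the finitely many $H$-orbits; each $O_j$ is locally closed (automatic in the intended applications, where $H$ acts algebraically), and I would order them by a linear extension of the closure ordering so that each $U_j:=O_1\cup\cdots\cup O_j$ is open in $X$. Setting $V_j$ to be the sections of the sheaf supported in $U_j$ --- equivalently $V_j=\mathcal{S}(W_j)\cdot V$ for an $H$-stable open $W_j\subseteq\widehat{N}$ with $W_j\cap X=U_j$ --- yields an increasing $H$-stable, hence $G$-stable, filtration $0=V_0\subseteq V_1\subseteq\cdots\subseteq V_r=V$. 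Since an orbit open in $X$ is maximal, and one closed in $X$ minimal, for the closure ordering, $V_1$ comes from an open orbit and is a subrepresentation while $V/V_{r-1}$ comes from a closed orbit and is a quotient, and more generally open orbits contribute submodules and closed orbits quotients. Finally, $\pi_j:=V_j/V_{j-1}$ is the space of sections of the sheaf supported on the single orbit $O_j\cong H/H_{\psi_j}$; its restriction there is an $H$-equivariant sheaf, hence determined by its stalk $\pi_{N,\psi_j}$ with the natural action of $N\rtimes H_{\psi_j}$, and the space of compactly supported sections of an equivariant sheaf on a homogeneous space is exactly the unnormalized compactly induced module, giving $\pi_j\cong{\rm ind}_{N\rtimes H_{\psi_j}}^{N\rtimes H}(\pi_{N,\psi_j})$.

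The main obstacle is this last identification together with the gluing property $V(W_1\cap W_2)=V(W_1)\cap V(W_2)$ and the ``empty support $\Rightarrow 0$'' statement: these are precisely the points where one cannot merely write down a plausible natural map and verify bijectivity by hand, and where the Fourier-analytic ideas of Bernstein--Zelevinsky do the genuine work; the finiteness of orbits is used both to keep the stratification finite (so ordinary induction, not a transfinite argument, suffices) and to ensure each orbit is open in its closure. One should also take care that the change of variables $n\mapsto hnh^{-1}$ in the $H$-equivariance introduces a modulus factor, which is exactly why the statement is phrased with \emph{unnormalized} compact induction rather than the normalized one.
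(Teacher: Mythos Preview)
Your proposal is correct and follows essentially the same route as the paper's proof: both use the Fourier transform $\mathcal{S}(N)\cong\mathcal{S}(\widehat{N})$ to realize $\pi$ as compactly supported sections of a $G$-equivariant sheaf on $\widehat{N}$, identify stalks with twisted Jacquet modules (the paper cites [BZ, Prop.~1.14] and Lemma~2.33 for this), obtain the filtration from the open/closed decomposition of the support into $H$-orbits via the exact sequence $0\to\Gamma_c(X\setminus Z,\mathcal{E})\to\Gamma_c(X,\mathcal{E})\to\Gamma_c(Z,\mathcal{E}|_Z)\to 0$, and identify sections over a single orbit with the compactly induced module (the paper invokes [BZ, Prop.~2.23]). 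Your write-up is somewhat more explicit about the ordering of orbits and the technical points (gluing, empty support, the modulus factor behind unnormalized induction), but there is no substantive difference in strategy.
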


\begin{proof} 
Recall that smooth representations of $N$, a finite dimensional 
vector space over $F$, are described (as for any locally compact totally disconnected group), by nondegenerate representations
of the Hecke algebra ${\mathcal H}(N)$.  Bernstein-Zelevinsky in [BZ] observed that 
because of the isomorphism (of algebras!) afforded by the Fourier transform:
$${\mathcal F}: {\mathcal H}(N) 
\stackrel{\cong}\longrightarrow {\Sc}(\widehat{N}),$$
representations of $N$ 
can  be `geometrized': they are  described as  nondegenerate representations of the 
algebra of Schwartz functions ${\Sc}(\widehat{N})$ on $\widehat{N}$ (an algebra under pointwise 
multiplication).

Thus, a nondegenerate representation $\pi$ of the algebra ${\mathcal H}(N)$ 
gives rise to a sheaf 
${\mathcal E}(\pi)$ 
on $\widehat{N}$ such that ${\mathcal E}_c(\pi)$, the space of compactly supported 
sections of this sheaf on $\widehat{N}$, is equal to $\pi$ as a module for ${\Sc}(\widehat{N})$, and the stalk of the sheaf ${\mathcal E}(\pi)$ 
at a point $x \in \widehat{N}$ is (cf. Proposition 1.14 of [BZ])
$${\mathcal E}_x(\pi) = \pi /\{f\cdot v | f \in {\Sc}(\widehat{N}) {\rm ~with~}  f(x)=0, v \in \pi \}.$$ 

Using the identification of ${\mathcal H}(N)$ with $ {\Sc}(\widehat{N})$, and writing a  point $x \in \widehat{N}$ as $\psi$, it follows that 
$${\mathcal E}_\psi(\pi) = \pi /\{f\cdot v | f \in {\mathcal H}({N}) {\rm ~with~} {\mathcal F}(f)(\psi)= \int_N f(y)\psi(y)=0, v \in \pi \}.$$ 

Therefore 
from an application of what is called the lemma of Jacquet-Langlands about Jacquet modules, cf. lemma 2.33 of [BZ],
the fiber of ${\mathcal E}(\pi)$ at a character $\psi$ of $N$ is nothing but the Jacquet module 
$\pi_{N,\psi}$. 
Thus $ X= \{\psi \in \widehat{N} | \pi_{N,\psi}\not = 0 \}$ is the support of the sheaf ${\mathcal E}(\pi)$.

The sheaf ${\mathcal E}(\pi)$ on $\widehat{N}$ is canonically associated  to $\pi$, hence $\pi$ which is actually
a representation of $G = N \rtimes H$ but is being considered as a representation of $N$ alone for the moment,
becomes a $G$-equivariant sheaf on $\widehat{N}$.

Given any sheaf ${\mathcal E}$ on a locally compact totally disconnected topological space $X$ with a closed
subspace $Z$, we have the well-known Bernstein-Zelevinsky exact sequence:
$$0 \rightarrow \Gamma_c(X-Z,{\mathcal E})
\rightarrow \Gamma_c(X,{\mathcal E})
\rightarrow  \Gamma_c(Z,{\mathcal E}|_Z) \rightarrow 0,$$
(where $\Gamma_c$ refers to compactly supported sections), and this is the exact sequence which is
responsible for the filtration on $\pi$ in the proposition. However, another remark from Bernstein-Zelevinsky 
is needed before completion of the proof of the proposition, which is that if $Z$ is an orbit of characters of 
$N$ under $H$, then 
$\Gamma_c(Z,{\mathcal E}|_Z) $ can be identified to the induced representation which appears in the statement of the proposition. This is nothing but Proposition 2.23 of [BZ].
\end{proof}

The following proposition is the exact analogue of Proposition 25 in  [Se],  a form of Clifford theory, 
except that our 
normal abelian subgroup $N$ is more specific than his.
 (It is actually the previous proposition that we will
use in our work.)
 
\begin{proposition} Let $G = N \rtimes H$ 
be a locally compact totally disconnected
group with $N$ a finite dimensional vector space
over a non-archimedean local field $F$. Let $(\pi,V)$ be an irreducible smooth representation of $G$.
Then the set of characters $\psi: N \rightarrow \C^\times$ of $N$ for which $\pi_{N,\psi} \not = 0$ 
form a single orbit under $H$, 
and $$\pi \cong {\rm ind}_{N\rtimes H_{\psi}}^{N \rtimes H} (\pi_{N,\psi}),$$
where $\psi$ is any character of $N$ for which $\pi_{N,\psi} \not = 0$, 
and $H_{\psi}$ is the stabilizer of $\psi$ 
in $H$. Further, the representation  $\pi_{N,\psi} $ of $H_\psi$ is an irreducible representation, and every
 irreducible smooth representation of $G$ is obtained in this way.
\end{proposition}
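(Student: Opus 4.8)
The plan is to deduce this Clifford-theory statement for irreducible $\pi$ from the filtration produced by the previous proposition. First I would check the hypothesis of the previous proposition is satisfied: since $\pi$ is irreducible and hence admissible (or at least finitely generated), one expects $X = \{\psi \in \widehat N \mid \pi_{N,\psi} \neq 0\}$ to be a union of finitely many $H$-orbits; the cleanest route is to observe that $X$ is the support of the $G$-equivariant sheaf $\mathcal E(\pi)$ on $\widehat N$, that it is a closed $H$-stable subset, and that a finitely generated smooth module gives a sheaf whose support is a constructible (locally closed) $H$-stable set, which for the $H$-action on a finite-dimensional vector space must be a finite union of orbits. (If one prefers, one can first reduce to the case $\pi$ finitely generated and invoke this; irreducibility will then force more.)

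Granting the hypothesis, the previous proposition gives a $G$-stable filtration of $\pi$ whose successive quotients are $\pi_i \cong \mathrm{ind}_{N \rtimes H_{\psi_i}}^{N \rtimes H}(\pi_{N,\psi_i})$, indexed by the $H$-orbits on $X$. Since $\pi$ is irreducible, the filtration has exactly one nonzero quotient, so there is exactly one orbit — proving the first assertion, that the $\psi$ with $\pi_{N,\psi}\neq 0$ form a single $H$-orbit — and $\pi \cong \mathrm{ind}_{N \rtimes H_\psi}^{N \rtimes H}(\pi_{N,\psi})$ for that orbit, which is the displayed isomorphism. Next, irreducibility of $\pi_{N,\psi}$ as a representation of $H_\psi$ (equivalently of $N \rtimes H_\psi$, on which $N$ acts by $\psi$): if $\pi_{N,\psi}$ had a proper nonzero $N \rtimes H_\psi$-submodule $\sigma$, then by exactness of compactly-supported induction $\mathrm{ind}_{N \rtimes H_\psi}^{N \rtimes H}(\sigma)$ would be a proper nonzero $G$-submodule of $\pi \cong \mathrm{ind}_{N \rtimes H_\psi}^{N \rtimes H}(\pi_{N,\psi})$ — here one uses that induction from an open (hence closed, since the orbit is now the whole support) subgroup along an orbit is exact and injective on nonzero modules — contradicting irreducibility of $\pi$. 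Finally, for the converse (every irreducible of $G$ arises this way), start with an irreducible smooth representation $\tau$ of $N \rtimes H_\psi$ on which $N$ acts by $\psi$; one shows $\mathrm{ind}_{N \rtimes H_\psi}^{N \rtimes H}(\tau)$ is irreducible (again via the compatibility of induction with the sheaf picture: its restriction to $N$ is supported exactly on the orbit $H\cdot\psi$, and Mackey-type reasoning together with irreducibility of $\tau$ on the stabilizer forces irreducibility), and then that taking $\psi$-twisted Jacquet module recovers $\tau$, so every such induced module is irreducible and these exhaust the irreducibles by the first part.

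The main obstacle I expect is the verification that the hypothesis of the previous proposition holds, i.e., that $X$ is a finite union of $H$-orbits; this is where the structure of the specific pair $(N,H)$ — $N$ a finite-dimensional $F$-vector space and $H$ acting linearly (or by affine/linear-algebraic maps) — really enters, since for a general locally compact $H$ acting on $\widehat N$ there is no reason for constructible $H$-stable sets to decompose into finitely many orbits. The honest statement is that one needs $H$ to act on $\widehat N \cong N^\vee$ through a linear-algebraic group with finitely many orbits on the relevant stratum, which holds in the Bernstein–Zelevinsky mirabolic setting and in our applications; so the careful thing is to either impose this as part of the (admissible, finitely generated) hypothesis or to note that in the cases of interest $H$ acts with an open dense orbit and finitely many orbits on $\widehat N$ altogether. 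The remaining steps — exactness and injectivity of $\mathrm{ind}$ along a locally closed orbit, and the Jacquet-module reconstruction $\bigl(\mathrm{ind}_{N\rtimes H_\psi}^{N\rtimes H}\tau\bigr)_{N,\psi} \cong \tau$ — are routine given Proposition 2.23 and Lemma 2.33 of [BZ] as quoted in the proof above.
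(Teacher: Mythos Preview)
Your approach via Proposition 1 has a real gap, and you correctly identify it yourself: there is no reason, under the hypotheses as stated, that $X=\{\psi:\pi_{N,\psi}\neq 0\}$ should consist of finitely many $H$-orbits. The proposition does not assume $H$ acts algebraically on $\widehat N$ with finitely many orbits, and ``irreducible $\Rightarrow$ admissible $\Rightarrow$ support constructible $\Rightarrow$ finitely many orbits'' is not valid at this level of generality (irreducible smooth need not be admissible for an arbitrary locally compact totally disconnected $G$, and even admissibility would not force a finite orbit decomposition without further algebraic hypotheses on the $H$-action). So you cannot invoke Proposition 1 to get started.

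The paper avoids this entirely by not routing through Proposition 1. It uses the same Bernstein--Zelevinsky geometrization --- $\pi$ as compactly supported sections of a $G$-equivariant sheaf $\mathcal E(\pi)$ on $\widehat N$ --- but then argues directly from irreducibility of the sheaf: a $G$-equivariant sheaf on a locally compact totally disconnected space has irreducible space of compactly supported sections if and only if (i) $G$ acts transitively on the support, and (ii) the fiber at any point is irreducible over its stabilizer. Condition (i) gives the single-orbit statement immediately, with no finiteness assumption needed; condition (ii) gives irreducibility of $\pi_{N,\psi}$; and the identification $\pi\cong\mathrm{ind}_{N\rtimes H_\psi}^{N\rtimes H}(\pi_{N,\psi})$ then comes from Proposition 2.23 of [BZ]. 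Your remaining steps (exactness of $\mathrm{ind}$, the reconstruction $(\mathrm{ind}\,\tau)_{N,\psi}\cong\tau$) are fine and in fact become unnecessary once one argues this way.
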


\begin{proof} The proof of this proposition follows from the 
observation of Bernstein-Zelevinsky  that smooth representations of $N$,
 a finite dimensional 
vector space over $F$, 
can  be `geometrized' as discussed in the proof of Proposition 1, 
i.e., correspond to compactly 
supported global sections of a $G$-equivariant sheaf of $\Sc(X)$-modules on a locally compact totally disconnected topological space $X = \widehat{N}$.
Clearly (compactly supported) global sections of a $G$-equivariant sheaf $\mathcal E$ on a locally compact totally disconnected
topological space $X$
give rise to an irreducible representation of $G$ if and only if,
\begin{enumerate}
\item the group $G$ operates transitively on $X$,
\item the fiber ${\mathcal E}_x$ of the sheaf ${\mathcal E}$ at any point $x\in X$ is an irreducible representation
of the stabilizer $G_x$ of the point $x \in X$.
\end{enumerate}
The conclusion of the proposition is now clear.
\end{proof}

\section{Non distinction of cuspidal representations}
The aim of this section is to prove that a cuspidal
representation of $\U(n,n)(F)$ is not distinguished by $\Sp(2n,F)$ where $F$ is any non-archimedean local field. 
The proof of this 
result  --- which will assume less than distinction by $\Sp(2n,F)$, and will give more information --- will be 
by an inductive argument on $n$ for which we fix some notation.

Let $W_i$ be the symplectic vector space of dimension $2i$ over $F$ with a fixed 
basis $\langle e_i,\cdots, e_1, f_1,\cdots, f_i\rangle$ 
with symplectic form $\langle -,- \rangle $ with the property that $\langle e_j, f_k\rangle = \delta_{jk} = -\langle f_k, e_j\rangle$, and with all the other products zero. The symplectic spaces $W_i$ form a nested sequence of vector spaces with $W_1 \subset W_2 \subset \cdots \subset W_n$.

Given a symplectic space $W$ over $F$, we have a skew-hermitian space $W_E= W\otimes E$ over $E$ which can be used to 
define a unitary group    $\U(W_E)$ with $\Sp(W) \subset \U(W_E)$. 

For $G =\Sp(W)$ (or $\U(W_E)$), define the Klingen parabolic subgroup $Q$ (resp. $P$)  to be the stabilizer of an isotropic 
 line $\langle w \rangle$ in $W$ (resp. $W_E$). Since any two isotropic vectors in $W$ (or $W_E$) are conjugate under $\Sp(W)$ (or $\U(W_E)$),
the Klingen parabolic subgroups are unique up to conjugacy.

In our analysis below, it will be important  to use the subgroup $Q^1$ 
 of $Q$ (resp. $P^1$ of $P$) stabilizing the  isotropic vector $w$ itself. We call these subgroups 
{\it Klingen mirabolic subgroup} in analogy with the mirabolic subgroup of Bernstein-Zelevinsky for the group 
$\GL_n(F)$. They indeed have  much in common with the mirabolic subgroup of Bernstein-Zelevinsky. If we denote the 
Klingen mirabolic in $\Sp(W_n)$ stabilizing the vector $e_n \in W_n$ by $Q^1_n$, then $Q^1_n = \Sp(W_{n-1}) \cdot H^{2n-2}(F)$, where $H^{2n-2}(F)$ is the Heisenberg 
group on the symplectic vector space $W_{n-1}$ (thus $\dim H^{2n-2}(F)= 2n-1$) with the character group of $H^{2n-2}(F)$ identified
to $W_{n-1}$ such that  the action of $\Sp(W_{n-1})$ on $H^{2n-2}(F)$, and hence on its character group, is the natural action of $\Sp(W_{n-1})$ on $W_{n-1}$.   Similarly, 
if we denote the Klingen mirabolic in $\U(W_n\otimes E)$ stabilizing the vector $e_n \in W_n \otimes E $ by $P^1_n$, then $P^1_n = \U(W_{n-1} \otimes E) \cdot H^{2n-2}(E)$, 
where $H^{2n-2}(E)$ is the Heisenberg 
group on the skew-hermitian vector space $W_{n-1}\otimes E$ (thus $\dim H^{2n-2}(E)= 4n-3$) 
with the character group of $H^{2n-2}(E)$ 
identified
to $W_{n-1}\otimes E$ such that  the action of $\U(W_{n-1}\otimes E)$ on $H^{2n-2}(E)$, and hence on its character group, is the natural action of $\U(W_{n-1}\otimes E)$ on $W_{n-1}\otimes E$.   An essential input for our proof is the fact that 
the Heisenberg group $H^{2n-2}(E)$ contains the Heisenberg group $H^{2n-2}(F)$ as a normal subgroup, and their centers are the same, so $H^{2n-2}(E)/H^{2n-2}(F)$ is a vector space over $F$ which is isomorphic to $W_{n-1}$.

It will be convenient to write out the unipotent radical $N_{n}(G) = H^{2n-2}(E)$ of $P^1_n$, 
as well as the unipotent radical $N_{n}(S) = H^{2n-2}(F)$ of $Q^1_n$ both arising as the stabilizer group of the 
isotropic vector $e_n$ in the matrix form 
with respect to the ordered basis $\langle e_n,\cdots, e_1, f_1,\cdots, f_n\rangle$ as: 
$$ N_n(G)= \left\{ \left(\begin{array}{ccccccc}
 1 & x_{2n-1} & x_{2n-2}    & \cdots & x_{2} & z \\
 0 & 1   & 0 &        & 0 & y_2\\
 0 & 0   & 1      & \cdots & 0& y_{3} \\
 0 &     &        & \ddots &  & \vdots\\
 0 &     &        & \cdots & 1 &  y_{2n-1}\\
 0 &\cdots &      & 0     & 0  &  1 \end{array}\right),  \begin{array}{cccl} x_i & , & y_i & \in E, \,\, z \in F\\
 x_i & = & \bar{y}_i, & 2\leq i \leq n-1, \\
                                                                           x_i & = & -\bar{y}_i, & n \leq i \leq 2n-1. 
\end{array} \right\}$$
and 
$$ N_n(S)= \left\{ \left(\begin{array}{ccccccc}
 1 & x_{2n-1} & x_{2n-2}    & \cdots & x_{2} & z \\
 0 & 1   & 0 &        & 0 & y_{2}\\
 0 & 0   & 1      & \cdots & 0& y_{3} \\
 0 &     &        & \ddots &  & \vdots \\
 0 &     &        & \cdots & 1 &  y_{2n-1}\\
 0 &\cdots &      & 0     & 0  &  1 \end{array}\right),  \begin{array}{ccc}  & x_i, y_i, z \in F  &  \\
 & x_i  =  {y}_i, & 2\leq i \leq n-1,\\
                                                                           & x_i  =  -{y}_i, & n \leq i \leq 2n-1. 
\end{array} \right\}$$

Recall that  $\psi$ is  a fixed non-trivial character of $F$; assuming that $E=F(\sqrt{d}), d \in F^\times$, 
 let $\psi_d$ be the character on trace zero elements of $E$ defined by $\psi_d(e) = \psi(\sqrt{d}e)$, 
and let $\psi_n$ be the character of $N_n(G)$:

$$ \psi_n\left(\begin{array}{ccccccc}
 1 & x_{2n-1} & x_{2n-2}    & \cdots & x_{2} & z \\
 0 & 1   & 0 &        & 0 & y_{2}\\
 0 & 0   & 1      & \cdots & 0& y_{3} \\
 0 &     &        & \ddots &  & \vdots \\
 0 &     &        & \cdots & 1 &  y_{2n-1}\\
 0 &\cdots &      & 0     & 0  &  1 \end{array}\right) = \psi_d(x_{2n-1}+y_{2n-1})= 
\psi(\sqrt{d}[x_{2n-1}+y_{2n-1}]), 
\hspace{3cm}(*)$$
where we note that since $x_{2n-1} = -y_{2n-1}$ for elements in $N_n(S)$, the character $\psi_n$ is trivial on $N_n(S)$,
but since $x_{2n-1} = -\bar{y}_{2n-1}$ for elements in $N_n(G)$, $x_{2n-1}+y_{2n-1} = -\bar{y}_{2n-1} + y_{2n-1}$, therefore 
$\sqrt{d}[x_{2n-1}+y_{2n-1}] \in F$, so the character $\psi_n$ is non-trivial on $N_n(G)$ but trivial on $N_n(S)$.

\begin{proposition} \label{prop3} Let $\pi$ be a smooth representation of the Klingen mirabolic subgroup $P_n^1$ of  $\U(W_n \otimes E)$ which
 is distinguished by the Klingen mirabolic subgroup $Q^1_{n}$ of the symplectic subgroup $\Sp(W_n)$. Then for the
unipotent radical $N_n(G)$ of $P^1_n$, there is a character $\mu:N_n(G)\rightarrow \C^\times$ 
which is either $\psi_{n}$ or trivial such that
$\pi_\mu$, the maximal quotient of $\pi$ on which $N_n(G)$ acts by $\mu$ is a 
smooth representation of the Klingen mirabolic subgroup $P_{n-1}^1$ of  $\U(W_{n-1} \otimes E)$ which
 is distinguished by the Klingen mirabolic subgroup $Q^1_{n-1}$ of the symplectic subgroup $\Sp(W_{n-1})$. 
\end{proposition}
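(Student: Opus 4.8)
The plan is to push the given nonzero $Q_n^1$-invariant functional $\ell$ on $\pi$ through three successive (twisted) Jacquet functors attached to the pieces of the Heisenberg group $N_n(G)=H^{2n-2}(E)$, reducing to the Clifford theory of Proposition 1 for the pair $\big(\Sp(W_{n-1}),W_{n-1}\big)$, where $\Sp(W_{n-1})$ has only the two orbits $\{0\}$ and $W_{n-1}\setminus\{0\}$ on $\widehat{W_{n-1}}\cong W_{n-1}$; these two orbits will yield the two alternatives $\mu$ trivial and $\mu=\psi_n$. First, the common centre $Z\cong F$ of $N_n(G)$ and $N_n(S)$ lies in $Q_n^1$, so $\ell$ factors through the coinvariants $\pi_Z$; replacing $\pi$ by $\pi_Z$, the groups $N_n(G)/Z\cong W_{n-1}\otimes E$ and $N_n(S)/Z\cong W_{n-1}$ become honest $F$-vector groups and $\ell$ is $Q_n^1/Z$-invariant. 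Next, as $N_n(S)/Z\subset Q_n^1/Z$, the functional factors through $\tau:=\pi_{N_n(S)/Z}$ (the untwisted Jacquet module along $N_n(S)/Z$). Now $W_{n-1}\otimes E=W_{n-1}\oplus\sqrt d\,W_{n-1}$, the subgroup $\Sp(W_{n-1})$ is exactly the stabiliser in $\U(W_{n-1}\otimes E)$ of the $F$-subspace $W_{n-1}$, and $\Sp(W_{n-1})$ normalises $N_n(S)$ inside $N_n(G)$; hence $\tau$ is a smooth representation of $\big(\sqrt d\,W_{n-1}\big)\rtimes\Sp(W_{n-1})$, with $\sqrt d\,W_{n-1}\cong N_n(G)/N_n(S)\cong W_{n-1}$, carrying a nonzero $\Sp(W_{n-1})$-invariant functional $\ell'$.

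Two points make this work. First, for any character $\mu$ of $N_n(G)$ that is trivial on $N_n(S)$, the twisted Jacquet module of $\tau$ along $\sqrt d\,W_{n-1}$ with respect to the character induced by $\mu$ equals, by composing the definitions, the twisted Jacquet module $\pi_\mu=\pi_{N_n(G),\mu}$ of the original $\pi$. Second, by formula $(*)$ the character $\psi_n$ is trivial on both $Z$ and $N_n(S)$, so it descends to a nontrivial character $\bar\psi_n$ of $\sqrt d\,W_{n-1}=N_n(G)/N_n(S)$; under $\sqrt d\,W_{n-1}\cong W_{n-1}$ this $\bar\psi_n$ corresponds to a nonzero $F$-multiple of $e_{n-1}$, whence its stabiliser in $\Sp(W_{n-1})$ is the Klingen mirabolic $Q_{n-1}^1$ and its stabiliser in $\U(W_{n-1}\otimes E)$ is the Klingen mirabolic $P_{n-1}^1$ (the $\sqrt d$ being irrelevant since $\U(W_{n-1}\otimes E)$ acts $E$-linearly). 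Applying Proposition 1 to $\tau$ as a representation of $\big(\sqrt d\,W_{n-1}\big)\rtimes\Sp(W_{n-1})$: the closed orbit $\{0\}$ gives the quotient $\rho_0=\tau_{\sqrt d\,W_{n-1}}=\pi_\mu$ with $\mu$ trivial, and the open orbit gives the submodule $\rho_1\cong{\rm ind}_{(\sqrt d\,W_{n-1})\rtimes Q_{n-1}^1}^{(\sqrt d\,W_{n-1})\rtimes\Sp(W_{n-1})}(\sigma)$, where (taking $\bar\psi_n$ as the orbit representative when $\rho_1\neq 0$) $\sigma=\pi_{N_n(G),\psi_n}=\pi_\mu$ with $\mu=\psi_n$, a representation of $P_{n-1}^1$ by the general theory of twisted Jacquet modules.

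It remains to propagate $\ell'$. If $\ell'$ vanishes on the submodule $\rho_1$ (automatic when $\rho_1=0$), it descends to a nonzero $\Sp(W_{n-1})$-invariant --- hence a fortiori $Q_{n-1}^1$-invariant --- functional on $\rho_0=\pi_\mu$ with $\mu$ trivial, which is a representation of $\U(W_{n-1}\otimes E)\supseteq P_{n-1}^1$; this is the conclusion with $\mu$ trivial. Otherwise $\ell'$ is nonzero on $\rho_1$. Since $\sqrt d\,W_{n-1}$ is normal in $\big(\sqrt d\,W_{n-1}\big)\rtimes\Sp(W_{n-1})$ there is a single relevant double coset, so the Mackey formula gives $\rho_1|_{\Sp(W_{n-1})}\cong{\rm ind}_{Q_{n-1}^1}^{\Sp(W_{n-1})}(\sigma)$, and therefore $\Hom_{\Sp(W_{n-1})}\!\big({\rm ind}_{Q_{n-1}^1}^{\Sp(W_{n-1})}\sigma,\C\big)\neq 0$. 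Both $\Sp(W_{n-1})$ and $Q_{n-1}^1=\Sp(W_{n-2})\ltimes H^{2n-4}(F)$ are unimodular (on the Lie algebra of $H^{2n-4}(F)$ the group $\Sp(W_{n-2})$ acts with determinant $1$), so Frobenius reciprocity for compactly supported induction gives $\Hom_{Q_{n-1}^1}(\sigma,\C)\neq 0$; that is, $\pi_\mu=\sigma$ with $\mu=\psi_n$ is distinguished by $Q_{n-1}^1$ and is a representation of $P_{n-1}^1$. Either alternative yields the proposition.

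The main obstacle is the bookkeeping in the first two paragraphs: checking that the three successive (twisted) Jacquet functors --- by $Z$, by $N_n(S)/Z$, and by $N_n(G)/N_n(S)$ --- compose to the single twisted Jacquet module $\pi_{N_n(G),\mu}$ appearing in the statement, and matching $\psi_n$ (via $(*)$) and its stabilisers in $\Sp(W_{n-1})$ and $\U(W_{n-1}\otimes E)$ against the Klingen mirabolic subgroups $Q_{n-1}^1$ and $P_{n-1}^1$. The structural facts recalled just before the statement --- that $H^{2n-2}(F)$ is normal in $H^{2n-2}(E)$ with the same centre and $H^{2n-2}(E)/H^{2n-2}(F)\cong W_{n-1}$ with $\Sp(W_{n-1})$ acting as on $W_{n-1}$ --- are exactly what make these reductions line up.
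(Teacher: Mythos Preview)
Your proof is correct and follows essentially the same route as the paper's: take $N_n(S)$-coinvariants to land in a representation of $\Sp(W_{n-1})\ltimes W_{n-1}$, apply the Bernstein--Zelevinsky Clifford theory (Proposition~1) to the two $\Sp(W_{n-1})$-orbits on $\widehat{W_{n-1}}$, and use Frobenius reciprocity on the open-orbit piece. Your extra decomposition through the centre $Z$ and the explicit check that $Q_{n-1}^1$ is unimodular are just more detailed versions of steps the paper leaves implicit; the argument is otherwise the same.
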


\begin{proof} The proof is a direct consequence of the Clifford theory developed in the last section. Recall that we 
have denoted by 
$N_n(S)$ (resp. $N_n(G)$), the unipotent radical of the Klingen mirabolic in $\Sp(W_n)$ (resp. $\U(W_n \otimes E)$) 
stabilizing the isotropic vector $e_n$. Both  $N_n(S)$ and $N_n(G)$ are normalized by $\Sp(W_{n-1})$, 
and $N_n(S)$ is contained in $N_n(G)$ as a normal subgroup with $N_n(G)/N_n(S) \cong W_{n-1}$  as a module for 
$\Sp(W_{n-1})$.

Let $\pi_{N_n(S)}$ 
be the largest quotient of $\pi$ on which $N_n(S)$ operates trivially. It is a smooth 
module for $\Sp(W_{n-1}) \ltimes N_n(G) /N_n(S) \cong \Sp(W_{n-1}) \ltimes W_{n-1}$. 
Since $\pi$ is distinguished by the mirabolic subgroup $Q^1_n$ of $\Sp(W_n)$, 
$\pi_{N_n(S)}$ is distinguished by $\Sp(W_{n-1})$. 
Since $\pi_{N_n(S)}$ is a module for $\Sp(W_{n-1}) \ltimes W_{n-1}$, we can apply Clifford 
theory to understand this as a module for $\Sp(W_{n-1})$. 

The action of $\Sp(W_{n-1})$ on 
the character group of $W_{n-1}$ has two orbits, one consisting of the trivial character, and the other passing through
the character $\psi_{n}$ whose stabilizer in $\Sp(W_{n-1})$ is the 
Klingen mirabolic subgroup $Q^1_{n-1}$ of the symplectic subgroup $\Sp(W_{n-1})$. Notice that the
character $\psi_{n}$ of $N_n(G)/N_n(S)$ can also be considered as a character of $N_n(G)$, and the stabilizer
of this character of $N_n(G)$ in $\U(W_{n-1} \otimes E)$ is 
the Klingen mirabolic subgroup $P_{n-1}^1$ of  $\U(W_{n-1} \otimes E)$.

By Clifford theory, $\pi_{N_n(S)}$ as a module for $\Sp(W_{n-1})$ has two sub-quotients corresponding to the two orbits 
for the action of $\Sp(W_{n-1})$ on the character group of $N_n(G)/N_n(S) \cong W_{n-1}$. The sub-quotient corresponding to the trivial representation of $W_{n-1}$ being $\pi_{N_n(G)}$, and the other subquotient (in fact a sub-module)
being 
$${\rm ind}_{Q_{n-1}^1}^{\Sp(W_{n-1})}(\pi_{\psi_n})$$
where $\pi_{\psi_n}$ is the maximal quotient of $\pi$ on which $N_n(G)$ acts by $\psi_n$.

Since $\pi_{N_n(S)}$ is distinguished by $\Sp(W_{n-1})$, 
one of these two sub-quotients is distinguished by $\Sp(W_{n-1})$, hence by Frobenius reciprocity either
$\pi_{N_n(G)}$ is distinguished by $\Sp(W_{n-1})$ and therefore also by its Klingen mirabolic subgroup, 
or $\pi_{\psi_n}$ which is a
smooth representation of the Klingen mirabolic subgroup $P_{n-1}^1$ of  $\U(W_{n-1} \otimes E)$ 
 is distinguished by the Klingen mirabolic subgroup $Q^1_{n-1}$ of the symplectic subgroup $\Sp(W_{n-1})$, completing the proof of the proposition. \end{proof}

\begin{corollary}\label{corollary2}
A smooth representation $\pi$ of the Klingen mirabolic subgroup $P_n^1$ of  $\U(W_n \otimes E)$ 
which
 is distinguished by the Klingen mirabolic subgroup $Q^1_{n}$ of the symplectic subgroup $\Sp(W_n)$ carries a
nonzero $\mu_n$-linear form 
for the group of the upper-triangular unipotent matrices in  $\U(W_n \otimes E)$ for $\mu_n$ given by:
$$\mu_n(X)=\psi_d( \epsilon_1[x_{1}+ x_{2n-1}] + \epsilon_2[x_2+ x_{2n-2}] +\cdots +  \epsilon_{n-1}[x_{n-1} + x_{n+1}]), $$
for $$X=\left(\begin{array}{ccccccc}
 1 & x_{1} & *    & * & * & * \\
 0 & 1   & x_2 &   *     & * & *\\
 0 & 0   & 1      & x_3 & * & * \\
 0 &     &        & \ddots & \ddots & \vdots \\
 0 &     &        &  & 1 &  x_{2n-1}\\
 0 &\cdots &      & 0     & 0  &  1 \end{array}\right)$$
where 
the $\epsilon_i$ are either 0 or 1, and we note the most important aspect of the character $\mu_n(X)$ that the term $x_n$ is missing. 
(Recall that $\psi_d(e) = \psi(\sqrt{d}e)$ is a character 
on trace zero elements of $E$.)
\end{corollary}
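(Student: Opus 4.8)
The plan is to deduce Corollary~\ref{corollary2} from Proposition~\ref{prop3} by induction on $n$, tracking not just distinction but the explicit character by which the successive unipotent radicals act. First I would observe that the statement is an assertion about a chain of twisted Jacquet modules: applying Proposition~\ref{prop3} to $\pi$ produces a character $\mu \in \{\psi_n, \mathbf{1}\}$ of $N_n(G)$ such that the corresponding quotient $\pi_\mu$ is again a smooth representation of $P^1_{n-1}$ distinguished by $Q^1_{n-1}$; re-applying the proposition $n-1$ more times yields a chain of characters $\mu^{(n)}, \mu^{(n-1)}, \dots, \mu^{(2)}$, each either the relevant $\psi_j$ or trivial, and a final nonzero linear form on the bottom quotient. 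The key point is that the unipotent radical $N_j(G)$ at stage $j$ consists (in the matrix realization with respect to $\langle e_n, \dots, e_1, f_1, \dots, f_n\rangle$, restricted to the span of $e_j, \dots, e_1, f_1, \dots, f_j$) of the entries $x_j$ and $x_{2n-j}$ in the top row together with the $y$'s and the central entry, subject to $x_i = -\bar y_i$; and the character $\psi_j$ given by $(*)$ reads off exactly $\psi_d(x_{2n-j} + y_{2n-j}) = \psi_d(x_{2n-j} - \bar x_{2n-j})$, which after the identification of $N_j(G)/N_j(S)$ with $W_{j-1}$ becomes, up to the constraint $x_i = -\bar y_i$, a character depending on the pair of top-row entries $x_{j-1}$ and $x_{n+1-(j-1)}$ — wait, more precisely on $x_{j}+x_{2n-j}$ restricted appropriately. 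I would make this bookkeeping precise and show that the accumulated constraint from the $n-1$ successive twisted Jacquet modules is exactly a character of the full upper-triangular unipotent subgroup of $\U(W_n \otimes E)$ of the form $\mu_n$ displayed, with an $\epsilon_i \in \{0,1\}$ recording at stage $n+1-i$ whether Proposition~\ref{prop3} returned $\psi_{\bullet}$ (giving $\epsilon_i = 1$) or the trivial character (giving $\epsilon_i = 0$).

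The main structural input is that the full upper-triangular unipotent group $U$ of $\U(W_n \otimes E)$ is built up from the $N_j(G)$ as an iterated semidirect product matching the tower of Klingen mirabolics: $U \cap P^1_n$ surjects onto the corresponding unipotent of $\U(W_{n-1}\otimes E)$ with kernel (a subgroup of) $N_n(G)$, so a $\mu^{(n)}$-form on the top quotient composed with a $\mu_{n-1}$-form produced inductively on the lower quotient glues to a $\mu_n$-form on $U$, where $\mu_n$ restricts to $\mu^{(n)}$ on $N_n(G)$ and to the pulled-back $\mu_{n-1}$ on a complement. The crucial feature I want to highlight — and the reason the statement is phrased the way it is — is that $x_n$ never appears: the coordinate $x_n$ sits on the ``diagonal'' of the symmetry $x_i \leftrightarrow x_{2n-i}$ (since $2n - n = n$), so it is constrained by $x_n = -\bar x_n$ inside every $N_j(G)$ and is \emph{not} one of the free coordinates on which any $\psi_j$ depends; thus no stage of the induction can contribute a term in $x_n$. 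I would verify that the pairing used to identify $N_j(G)/N_j(S) \cong W_{j-1}$ sends $\psi_j$ exactly to the linear functional $x_{j-1} + x_{2n-(j-1)}$ (scaled by $\sqrt d$), so that the index shift in the displayed formula for $\mu_n$ (the term $\epsilon_i[x_i + x_{2n-i}]$ for $1 \le i \le n-1$) comes out correctly.

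As for the base case, when $n = 1$ the group $\U(W_1 \otimes E)$ is (a form of) $\U_2$, the Klingen mirabolic $P^1_1$ is just $N_1(G)$ together with the trivial $\U(W_0 \otimes E)$, and distinction by $Q^1_1$ forces a nonzero linear form which is automatically invariant/equivariant in the required way (the formula $\mu_1$ has an empty sum, i.e. is trivial), so there is nothing to prove; I would simply note this. The only real work, then, is (i) the matrix computation identifying $\psi_j$ under the $N_j(G)/N_j(S) \cong W_{j-1}$ isomorphism with the claimed linear form, and (ii) checking that the semidirect-product structure of $U$ is compatible with the tower so that the inductively produced linear forms assemble into a single $U$-equivariant form. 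I expect step (ii), the compatibility of the group filtration of $U$ with the tower of Klingen mirabolics and the resulting gluing of linear forms, to be the main obstacle — not because it is deep, but because it requires carefully choosing complements to $N_j(G)$ inside $U \cap P^1_j$ and verifying that the characters $\psi_j$ extend consistently; the coordinate bookkeeping around the fixed point $x_n$ of the symmetry is where a sign or index error is most likely to creep in, and is precisely the point the statement is flagging as ``the most important aspect.''
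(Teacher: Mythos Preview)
Your approach is correct and is essentially the same as the paper's: induction on $n$ via Proposition~\ref{prop3}, with the base case $n=1$ trivial and the inductive step recording which of the two alternatives ($\psi_n$ or trivial) occurred as the value of $\epsilon_1$. The paper's proof is a two-sentence version of exactly this.

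Two small remarks. First, the gluing step you flag as the main obstacle is simpler than you suggest: since $N_n(G)$ is normal in $U_n$ with $U_n/N_n(G)\cong U_{n-1}$, a $(U_n,\mu_n)$-equivariant form on $\pi$ is literally the same thing as a $(U_{n-1},\mu_{n-1})$-equivariant form on $\pi_\mu$ (twisted Jacquet modules can be taken in stages), so no choice of complement is needed. Second, your index bookkeeping wobbles a bit: at stage $j$ the character $\psi_j$, viewed inside the big $2n\times 2n$ matrix, reads off the superdiagonal entries in positions $(n-j+1,n-j+2)$ and $(n+j-1,n+j)$, i.e.\ contributes $\epsilon_{n-j+1}[x_{n-j+1}+x_{n+j-1}]$ in the $\mu_n$ notation, not $x_{j-1}+x_{2n-(j-1)}$; once you fix this the missing $x_n$ falls out immediately (the innermost stage is $j=2$, giving $x_{n-1}+x_{n+1}$, and $j=1$ contributes nothing).
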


\begin{proof} Assuming the corollary for $n-1$, it is an immediate consequence of  the proposition that it holds for $n$ 
with $\epsilon_1 =0$ or $1$ depending on the two cases in the proposition; note also that for $n=1$, 
the Klingen mirabolic subgroup of both $\Sp(W_1)$
and $\U(W_1 \otimes E)$ is the group of $2 \times 2$ upper triangular matrices with entries in $F$ for which the corollary is obvious. The form of the character $\mu_n$ follows from the form of the character $\psi_n$ defined before  
Proposition \ref{prop3}.
\end{proof}

\begin{corollary}
 Any representation of $\U(n,n)(F)$ 
distinguished by $\Sp_{2n}(F)$ 
is a sub-quotient of a principal 
series representation of $\U(n,n)(F)$ 
induced from the  Siegel parabolic (with Levi $\GL_n(E)$). In particular, a 
representation of $\U(n,n)(F)$ distinguished by $\Sp_{2n}(F)$ cannot be cuspidal. 
\end{corollary}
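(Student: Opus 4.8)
The plan is to read everything off Corollary~\ref{corollary2}. Let $\pi$ be an irreducible admissible representation of $\U(n,n)(F)=\U(W_n\otimes E)$ distinguished by $\Sp_{2n}(F)=\Sp(W_n)$, and fix a nonzero $\Sp(W_n)$-invariant linear form on $\pi$. First I would restrict attention to the Klingen mirabolic subgroups: since $Q^1_{n}\subset\Sp(W_n)$ and $P_n^1\subset\U(W_n\otimes E)$, the chosen form is in particular $Q^1_{n}$-invariant, so $\pi\big|_{P_n^1}$ is a smooth representation of $P_n^1$ distinguished by $Q^1_{n}$. Corollary~\ref{corollary2} then gives a nonzero linear form $\ell$ on $\pi$ which transforms under the full group $N$ of upper-triangular unipotent matrices of $\U(n,n)(F)$ by the character $\mu_n$ --- a character of $N$ in which, as the authors emphasize, the coordinate $x_n$ does not appear.

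The crucial observation is that ``$x_n$ is missing'' says precisely that $\mu_n$ is trivial on the unipotent radical $N_{\mathrm{Sieg}}$ of the Siegel parabolic $P=M\,N_{\mathrm{Sieg}}$ of $\U(n,n)(F)$, with Levi $M=\GL_n(E)$. Indeed, in block form with respect to the ordered basis $\langle e_n,\dots,e_1,f_1,\dots,f_n\rangle$, the (abelian) group $N_{\mathrm{Sieg}}$ consists of the matrices $\left(\begin{smallmatrix} I_n & B\\ 0 & I_n\end{smallmatrix}\right)$ with $B$ satisfying the hermitian-symmetry condition; the only superdiagonal entry of such a matrix is the entry in position $(n,n+1)$, i.e.\ the coordinate $x_n$, while $x_1,\dots,x_{n-1},x_{n+1},\dots,x_{2n-1}$ all vanish on $N_{\mathrm{Sieg}}$. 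Since $\mu_n$ depends only on $x_1+x_{2n-1},\dots,x_{n-1}+x_{n+1}$, it restricts to the trivial character of $N_{\mathrm{Sieg}}$. Therefore the form $\ell$ is $N_{\mathrm{Sieg}}$-invariant, so it descends to a nonzero linear form on the Jacquet module $r_P(\pi)$ of $\pi$ along $P$; in particular $r_P(\pi)\neq 0$.

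To conclude, I would invoke the standard Jacquet--Casselman / Bernstein--Zelevinsky theory of Jacquet modules: an irreducible admissible $\pi$ with $r_P(\pi)\neq 0$ satisfies $\Hom_{M}(r_P(\pi),\sigma)\neq 0$ for $\sigma$ any irreducible quotient of $r_P(\pi)$, hence by Frobenius reciprocity $\pi$ embeds in $\Ind_P^{\U(n,n)(F)}\sigma$, a principal series induced from the Siegel parabolic with inducing datum a representation $\sigma$ of $\GL_n(E)$. This is exactly the asserted statement, and since $P$ is a proper parabolic, $\pi$ cannot be cuspidal. One gets slightly more for free: the same form shows $r_P(\pi)$ itself carries a nonzero linear form transforming under the maximal unipotent of $\GL_n(E)$ by the (possibly degenerate) Whittaker character induced by $\mu_n$.

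I do not expect a genuine obstacle here, since all the real work has gone into the Clifford-theoretic induction producing Corollary~\ref{corollary2}. The one point that needs care --- and it is precisely the point flagged after the displayed formula for $\mu_n$ --- is the bookkeeping that turns ``the term $x_n$ is missing'' into ``$\mu_n$ is trivial on $N_{\mathrm{Sieg}}$'', i.e.\ correctly locating the Siegel unipotent radical among the upper-triangular unipotent matrices in the chosen coordinates (equivalently, noting that $\alpha_n$ is the unique simple root whose root subgroup lies in $N_{\mathrm{Sieg}}$). After that, the passage from a twisted Jacquet functional to nonvanishing of the ordinary Jacquet module along $P$, and thence to the sub-quotient statement and non-cuspidality, is routine.
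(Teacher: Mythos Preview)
Your proposal is correct and follows exactly the same route as the paper's proof: reduce distinction by $\Sp_{2n}(F)$ to distinction by the Klingen mirabolic $Q^1_n$, invoke Corollary~\ref{corollary2}, and observe that the absence of $x_n$ in $\mu_n$ means $\mu_n$ is trivial on the Siegel unipotent radical, so the Siegel Jacquet module is nonzero. The paper's argument is stated in two sentences and leaves the coordinate bookkeeping implicit; you have spelled out that bookkeeping carefully and added the (correct) Frobenius-reciprocity step to upgrade ``nonzero Jacquet module'' to an actual embedding into an induced representation, which is slightly stronger than the sub-quotient statement as written.
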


\begin{proof}
A representation of $\U(n,n)(F)$ distinguished by $\Sp_{2n}(F)$ is \textsl{a fortiori} distinguished by the Klingen 
mirabolic in $\Sp_{2n}(F)$. It suffices then to observe that the character appearing in Corollary \ref{corollary2} above is trivial on the 
unipotent radical of the Siegel parabolic, hence the Jacquet module corresponding to the Siegel parabolic is nonzero.
\end{proof}

\begin{corollary}
For any representation $\pi$ of  $\U(n,n)(F)$ distinguished by $\Sp_{2n}(F)$, there is a character $\psi: U \rightarrow \mathbb{C}^{\times}$ of the unipotent radical $U$ of a minimal parabolic 
for which $\pi_{U,\psi} \neq 0$. 
\end{corollary}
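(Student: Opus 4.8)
The plan is to deduce this immediately from Corollary~\ref{corollary2}. First I would restrict $\pi$ from $\U(n,n)(F) = \U(W_n \otimes E)$ to the Klingen mirabolic subgroup $P^1_n$. Since the Klingen mirabolic subgroup $Q^1_n$ of $\Sp(W_n)$ is contained in $\Sp_{2n}(F)$, any $\Sp_{2n}(F)$-invariant linear form on $\pi$ is in particular $Q^1_n$-invariant, so $\pi|_{P^1_n}$ is a smooth representation of $P^1_n$ distinguished by $Q^1_n$. Corollary~\ref{corollary2} then applies verbatim and produces a nonzero $\mu_n$-equivariant linear form $\ell\colon \pi \to \C$ for the group $U$ of upper-triangular unipotent matrices of $\U(W_n \otimes E)$, where $\mu_n$ is the explicit character recalled there, namely $\mu_n(X) = \psi_d\big(\sum_{i=1}^{n-1}\epsilon_i(x_i + x_{2n-i})\big)$, in which the entry $x_n$ does not appear.

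Next I would observe that $U$ is exactly the unipotent radical of a minimal parabolic subgroup of the quasi-split group $\U(n,n)(F)$: with respect to the ordered basis $\langle e_n,\dots,e_1,f_1,\dots,f_n\rangle$ the upper-triangular matrices in $\U(W_n\otimes E)$ form a Borel subgroup $B = TU$, and $\mu_n$, which depends only on the super-diagonal entries $x_1,\dots,x_{2n-1}$ through the above formula, is a genuine character $U \to \C^\times$ (the map $U \to E^{2n-1}$ recording the super-diagonal entries is a homomorphism to an additive group, and $\psi_d$ is additive on trace-zero elements of $E$). Finally, a nonzero linear form $\ell$ with $\ell(uv) = \mu_n(u)\ell(v)$ kills every vector $u\cdot v - \mu_n(u)v$, hence factors through the twisted Jacquet module $\pi_{U,\mu_n} = \pi/\{u\cdot v - \mu_n(u)v \mid u \in U,\ v \in \pi\}$; since this quotient carries a nonzero functional it is nonzero. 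Taking $\psi = \mu_n$ then gives the assertion.

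The substance of the statement has already been carried out in Proposition~\ref{prop3} and its corollaries — the inductive Clifford-theory argument that forces a (degenerate) Whittaker datum to survive, with the crucial feature that the $n$-th super-diagonal coordinate is omitted — so there is no genuine obstacle remaining here; the proof is purely a matter of the bookkeeping identifications above, namely that the upper-triangular unipotent subgroup of Corollary~\ref{corollary2} is the unipotent radical of the Borel of $\U(n,n)(F)$ and that $\mu_n$ is a well-defined character of it. If one wishes, one can add the sharper remark that the argument in fact exhibits $\psi$ among the $2^{n-1}$ explicit characters $\mu_n$ of $U$ indexed by the choices $\epsilon_i \in \{0,1\}$, and that none of them is nondegenerate, which is precisely what is exploited in the preceding corollary to rule out cuspidality.
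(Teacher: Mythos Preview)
Your proposal is correct and matches the paper's (implicit) approach: the paper states this corollary without proof, evidently regarding it as immediate from Corollary~\ref{corollary2}, and your argument spells out exactly that deduction --- restrict to $P^1_n$, apply Corollary~\ref{corollary2} to obtain a nonzero $\mu_n$-equivariant functional on the upper-triangular unipotent subgroup $U$, and note that $U$ is the unipotent radical of a Borel of the quasi-split group $\U(n,n)(F)$. One small wording issue: saying ``the map $U \to E^{2n-1}$ recording the super-diagonal entries is a homomorphism'' is fine (the super-diagonal entries do add under multiplication of unipotent upper-triangulars), but you should also note that the unitary-group relations force each $x_i + x_{2n-i}$ to lie in the trace-zero part of $E$, so that $\psi_d$ applies; this is implicit in the paper's formulation of $\mu_n$ and poses no difficulty.
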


(By a theorem of Zelevinsky, any representation of $\GL_{2n}(F)$ has this property, but 
this is not the case for other groups, not even for Unitary groups.)

\begin{remark} In this section we have not used any property of a non-archimedean local field, 
and thus the results in this section remain valid for finite fields. In Theorem 2.2.1 of [He], Henderson  has given a complete classification of representations 
of $\U_{2n}(\F_q)$ which are distinguished by $\Sp_{2n}(\F_q)$, 
in particular he proves that there are no cuspidal representations of $\U_{2n}(\F_q)$ which are distinguished 
by $\Sp_{2n}(\F_q)$.
\end{remark}

\begin{remark}The proof given here on distinction of representations of $\U(n,n)(F)$ by $\Sp_{2n}(F)$ remains valid 
almost verbatim for representations of $\GL_{2n}(F)$ distinguished by $\Sp_{2n}(F)$ 
giving another proof of the theorem of
Heumos-Rallis in [HR] on non-existence of cuspidal representations of  $\GL_{2n}(F)$ distinguished by $\Sp_{2n}(F)$. 
In fact the proof given here uses just the Klingen mirabolic subgroup of $\Sp_{2n}(F)$ to draw this conclusion, and 
therefore cannot be expected to give the much finer results which have become available on representations 
of $\GL_{2n}(F)$ distinguished by $\Sp_{2n}(F)$. However, note that our proof uses more of $\Sp_{2n}(F)$, and its Klingen mirabolic subgroup, and almost nothing about the ambient group $\U(n,n)(F)$, or in this case, $\GL_{2n}(F)$, and therefore,
in particular our proof works as well to understand representations of  $\SL_{2n}(F)$ distinguished by $\Sp_{2n}(F)$. We 
only state the following proposition in this regard. \end{remark} 

\begin{proposition}\label{sln}
A smooth representation $\pi$ of $\SL_{2n}(F) = \SL(W_n)$
which
 is distinguished by the symplectic subgroup $\Sp(W_n)$ carries a
nonzero $\mu_n$-linear form for the group of the upper-triangular unipotent matrices in  $\SL(W_n)$ for $\mu_n$ given by:
$$\mu_n(X)=\psi_d( \epsilon_1[x_{1}+ x_{2n-1}] + \epsilon_2[x_2+ x_{2n-2}] +\cdots +  \epsilon_{n-1}[x_{n-1} + x_{n+1}]), $$
for $$X=\left(\begin{array}{ccccccc}
 1 & x_{1} & *    & * & * & * \\
 0 & 1   & x_2 &   *     & * & *\\
 0 & 0   & 1      & x_3 & * & * \\
 0 &     &        & \ddots & \ddots & \vdots \\
 0 &     &        &  & 1 &  x_{2n-1}\\
 0 &\cdots &      & 0     & 0  &  1 \end{array}\right)$$
where the $\epsilon_i$ are either 0 or 1,
and $\psi$ is any (fixed) nontrivial character of $F$.
\end{proposition}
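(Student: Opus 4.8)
The plan is to carry over, for $\SL(W_n)$ in place of $\U(W_n\otimes E)$, the inductive argument that proves Corollary~\ref{corollary2} via Proposition~\ref{prop3}; the only real work is to set up the right chain of subgroups, after which the Clifford-theoretic core goes through word for word.

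First I would introduce a ``Klingen mirabolic'' $R^1_j$ of $\SL(W_j)$ to play the role of $P^1_j$. Since it is not a parabolic --- nor even the full stabilizer of a vector --- it must be built by hand. With respect to the basis $\langle e_j,\dots,e_1,f_1,\dots,f_j\rangle$ of $W_j$, let $N_j\subset\SL(W_j)$ be the group of matrices
\[
\begin{pmatrix} 1 & x & z\\ 0 & I_{2j-2} & y\\ 0 & 0 & 1\end{pmatrix},\qquad x\in F^{2j-2}\ \text{a row},\ \ y\in F^{2j-2}\ \text{a column},\ \ z\in F,
\]
a Heisenberg group whose centre is the line $x=y=0$, and put $R^1_j:=\SL(W_{j-1})\ltimes N_j$. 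The following are routine matrix computations, parallel to facts recorded in Proposition~\ref{prop3}: $R^1_j\supset Q^1_j$, with unipotent radical $N_j$; $\SL(W_{j-1})$ normalizes $N_j$ (conjugation by $A$ sends $(x,y,z)$ to $(xA^{-1},Ay,z)$); $N_j$ contains $N_j(S)=H^{2j-2}(F)$ --- the subgroup cut out by $x_i=y_i$ for $2\le i\le j-1$ and $x_i=-y_i$ for $j\le i\le 2j-1$ --- as a normal subgroup, with $N_j/N_j(S)\cong W_{j-1}$ as a module for $\Sp(W_{j-1})$; and, by the dimension count $\sum_{j=1}^{n}\dim N_j=\sum_{j=1}^{n}(4j-3)=\binom{2n}{2}$, the maximal unipotent subgroup $U$ of $\SL(W_n)$ is $N_n\rtimes N_{n-1}\rtimes\cdots\rtimes N_1$.

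Next I would prove the exact analogue of Proposition~\ref{prop3}: if $\pi$ is a smooth representation of $R^1_j$ distinguished by $Q^1_j$, then there is a character $\mu$ of $N_j$, equal either to $\psi_j(x,y,z)=\psi(x_{2j-1}+y_{2j-1})$ or to the trivial character, such that $\pi_\mu$ is a representation of $R^1_{j-1}$ distinguished by $Q^1_{j-1}$. Here I would repeat the proof of Proposition~\ref{prop3} verbatim: pass to $\pi_{N_j(S)}$, a smooth module for $\Sp(W_{j-1})\ltimes\bigl(N_j/N_j(S)\bigr)\cong\Sp(W_{j-1})\ltimes W_{j-1}$ which is still distinguished by $\Sp(W_{j-1})$; apply the Clifford theory of the previous section to the two $\Sp(W_{j-1})$-orbits on $\widehat{W_{j-1}}$ (the trivial character, and the orbit of $\psi_j$, whose stabilizer in $\Sp(W_{j-1})$ is $Q^1_{j-1}$); conclude that one of the two successive quotients $\pi_{N_j}$ and ${\rm ind}_{Q^1_{j-1}}^{\Sp(W_{j-1})}(\pi_{\psi_j})$ is distinguished by $\Sp(W_{j-1})$, hence by Frobenius reciprocity that $\pi_{N_j}$ or $\pi_{\psi_j}$ is distinguished by $Q^1_{j-1}$. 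In the first case $\pi_{N_j}$ is a representation of $R^1_j/N_j=\SL(W_{j-1})$, hence of $R^1_{j-1}\subset\SL(W_{j-1})$, and is distinguished by $Q^1_{j-1}\subset\Sp(W_{j-1})$.

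The one point needing care --- and the only place where the non-parabolic nature of $R^1_j$ enters --- is that, in the second case, $\pi_{\psi_j}$ is again a representation of $R^1_{j-1}$. For this I would identify $\psi_j$, as a character of $N_j/[N_j,N_j]\cong W_{j-1}^\vee\oplus W_{j-1}$, with the pair $(e_{j-1},\,f_{j-1}^\vee)$ consisting of the vector $e_{j-1}$ and the functional dual to $f_{j-1}$, and then observe that $R^1_{j-1}=\SL(W_{j-2})\ltimes N_{j-1}$ lies inside the stabilizer of $\psi_j$ in $\SL(W_{j-1})$: indeed $\SL(W_{j-2})$ acts trivially on $\langle e_{j-1},f_{j-1}\rangle$, and the matrices of $N_{j-1}$ fix $e_{j-1}$ (their first basis vector) and fix $f_{j-1}^\vee$ (their last row being $(0,\dots,0,1)$). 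Granting this, $\pi_{\psi_j}$ --- a priori a representation of $N_j\rtimes{\rm Stab}_{\SL(W_{j-1})}(\psi_j)$ --- is in particular a representation of $R^1_{j-1}$ distinguished by $Q^1_{j-1}$. Then I would start the induction from $\pi|_{R^1_n}$, which is distinguished by $Q^1_n$ because $\pi$ is distinguished by $\Sp(W_n)\supset Q^1_n$, and iterate down to $j=1$, where $N_1(S)=N_1$ forces the trivial character and hence no contribution at the middle superdiagonal entry; assembling the characters exactly as in Corollary~\ref{corollary2} yields a nonzero linear form on $\pi$ transforming under $U$ by $\mu_n$ (the $j$-th step, when $\psi_j$ occurs, contributes $\psi(x_{n-j+1}+x_{n+j-1})$, which accounts for the terms $\epsilon_i[x_i+x_{2n-i}]$ and for the absence of $x_n$). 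As in the previous section the only use of the non-archimedean hypothesis is through the Clifford theory, and --- since here all matrix entries lie in $F$ --- the symbol $\psi_d$ in the statement is to be read as a fixed nontrivial character of $F$. I expect this structural bookkeeping around $R^1_j$ to be the main obstacle: in the unitary setting the crucial stabilizer computation is hidden in the assertion that $P^1_{n-1}$ is the stabilizer of an isotropic vector, whereas here one must verify by hand that the ad hoc subgroup $R^1_{j-1}$ sits inside the stabilizer of $\psi_j$.
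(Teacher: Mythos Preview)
Your proposal is correct and is precisely a fleshing-out of what the paper intends: the paper gives no separate proof of this proposition, only the remark preceding it that the argument for $\U(n,n)$ ``uses more of $\Sp_{2n}(F)$, and its Klingen mirabolic subgroup, and almost nothing about the ambient group\ldots and therefore, in particular our proof works as well to understand representations of $\SL_{2n}(F)$ distinguished by $\Sp_{2n}(F)$.'' You have carried out exactly that transcription, correctly isolating the one place where something must be checked by hand --- that the ad hoc subgroup $R^1_{j-1}=\SL(W_{j-2})\ltimes N_{j-1}$ lies in (in fact equals) the stabilizer of $\psi_j$ in $\SL(W_{j-1})$ --- and your verification via the pair $(e_{j-1},f_{j-1}^\vee)$ is right. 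Your reading of $\psi_d$ as simply a fixed nontrivial character of $F$ is also the intended one in this $\SL$ setting.
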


We next recall from Zelevinsky [Ze] the notion of degenerate Whittaker model 
of an arbitrary irreducible smooth
representation $\pi$ of  $\GL_n(F)$. He defines in \S 8 of [Ze] a character $\theta$ on the group $U$ of upper triangular 
unipotent elements of $\GL_n(F)$ by
$$\theta(u_{ij}) = \psi(\sum u_{i,i+1}),$$
where $\sum$ runs over all integers $1,2,\cdots, n-1$ except,
$$n-\lambda_1, n - \lambda_1-\lambda_2,\cdots, n-\lambda_1-\lambda_2-\cdots -\lambda_{k-1},$$
where the integers $\lambda_i$ are inductively defined with $\lambda_1$ being the highest nonzero 
derivative of $\pi$, $\lambda_2$ the highest nonzero derivative of $\pi^{\lambda_1}$, and so on. 
It is a theorem of Zelevinsky (corollary in \S 8.3 of [Ze]) that there is a linear form $\ell:\pi \rightarrow \C$
on which the group $U$ of upper triangular unipotent matrices 
acts by the character $\theta$, 
and the space of such linear forms has dimension 1.

\begin{conj} \label{conj1} Let $\pi$ be an 
irreducible admissible representation of $\GL(W_n)$ which is distinguished by $\Sp(W_n)$. 
Write $\pi$ restricted to $\SL(W_n)$ as a sum of irreducible
representations $\pi = \sum \pi_{\alpha}$ (with multiplicity 1). Then exactly one of the representations
$\pi_\alpha$ is distinguished by $\Sp(W_n)$, and the one which is distinguished by $\Sp(W_n)$ is the one which carries
the invariant linear form $\theta$ of Zelevinsky defined above. (There is a unique representation of $\SL(W_n)$ 
carrying the invariant linear form $\theta$ by the multiplicity one assertion of Zelevinsky for the group $\GL_n(F)$.) 
\end{conj}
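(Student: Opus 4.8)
The plan is to combine three ingredients: the multiplicity-free behaviour of the restriction of irreducible representations from $\GL(W_n)=\GL_{2n}(F)$ to $\SL(W_n)=\SL_{2n}(F)$, Zelevinsky's multiplicity-one theorem for the degenerate Whittaker functional $\theta$, and Proposition~\ref{sln}. First I would recall the standard structure theory (Gelbart--Knapp, Tadi\'c): $\pi|_{\SL(W_n)}$ is a finite, multiplicity-free direct sum $\bigoplus_\alpha \pi_\alpha$ of irreducible smooth representations of $\SL(W_n)$, the $\pi_\alpha$ forming a single orbit under conjugation by $\GL(W_n)$. Since $\Sp(W_n)\subseteq\SL(W_n)$, a nonzero $\Sp(W_n)$-invariant form on $\pi$ restricts to an $\Sp(W_n)$-invariant form on each summand, and as $\pi=\bigoplus_\alpha\pi_\alpha$ already as an $\Sp(W_n)$-module, this restricted form is nonzero on at least one $\pi_\alpha$; hence \emph{some} component is distinguished.

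For the ``exactly one'' assertion I would invoke that the symplectic period on $\GL_{2n}(F)$ is multiplicity-free, i.e. $\dim\Hom_{\Sp(W_n)}(\pi,\C)\le 1$. Since $\Hom_{\Sp(W_n)}\!\big(\bigoplus_\alpha\pi_\alpha,\C\big)=\bigoplus_\alpha\Hom_{\Sp(W_n)}(\pi_\alpha,\C)$, and since $\pi$ itself is distinguished by hypothesis, this common dimension equals exactly $1$; so precisely one $\pi_\alpha$ is distinguished, and it is distinguished with multiplicity one. On the other hand, by Zelevinsky's theorem $\dim\Hom_U(\pi,\theta)=1$ for $U$ the group of upper triangular unipotent matrices, and since $U\subseteq\SL(W_n)$ and $\pi=\bigoplus_\alpha\pi_\alpha|_U$ there is a \emph{unique} index $\alpha_0$ with $\Hom_U(\pi_{\alpha_0},\theta)\neq 0$. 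Thus the ``$\theta$-component'' is well defined, and the conjecture reduces to the single assertion that the distinguished component $\pi_{\alpha_1}$ carries $\theta$, i.e. $\alpha_1=\alpha_0$.

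To attack this I would apply Proposition~\ref{sln} to $\pi_{\alpha_1}$: it gives, on $\pi_{\alpha_1}$ itself, a nonzero $(U,\mu_n)$-functional for one of the characters $\mu_n$ of Corollary~\ref{corollary2} --- symmetric under $i\mapsto 2n-i$, with coefficients $\epsilon_i\in\{0,1\}$, and trivial on the $n$-th simple root. One then wants to match this $\mu_n$ with $\theta$. The difficulty, and the main obstacle, is exactly here: in general $\mu_n$ is only a \emph{sub}-character of $\theta$ (the nilpotent orbit it defines may lie strictly below the Zelevinsky orbit of $\pi$), and carrying $\mu_n$ is then strictly weaker than carrying $\theta$. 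To close the gap one needs the finer input that (i) for an $\Sp_{2n}(F)$-distinguished $\pi$ the Zelevinsky partition has a special shape --- by the classification of symplectic models on $\GL_{2n}(F)$ (Heumos--Rallis, Offen--Sayag) it is built from blocks of even multiplicity --- from which one checks that $\theta$ is itself already of the form $\mu_n$; and (ii) that the functional produced from the symplectic period is the ``largest'' admissible $\mu_n$, i.e. has $\epsilon_i=1$ exactly where $\theta$ demands it, which is the point where one must use more of $\Sp_{2n}(F)$ than its Klingen mirabolic.

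Where the relevant Zelevinsky type is completely understood the argument goes through, and this is what is meant by proving the conjecture ``in some cases'': it covers the representations $\pi$ for which $\pi|_{\SL(W_n)}$ is already irreducible (then trivially $\pi_{\alpha_0}=\pi_{\alpha_1}=\pi$), the one-dimensional representations (where $\theta$ is trivial and $\mu_n$ of Corollary~\ref{corollary2} has all $\epsilon_i=0$), and, for $n\le 2$, the representations of the quasi-split unitary group in four variables (and of $\GL_4(F)$) classified by the theta-correspondence methods developed in the later sections, where the distinguished component and its degenerate Whittaker type can be exhibited explicitly.
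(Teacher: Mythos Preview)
This statement is a \emph{conjecture}: the paper does not prove it in general, and your proposal correctly isolates the obstruction. Your reduction is sound --- multiplicity one for $(\GL_{2n},\Sp_{2n})$ (Heumos--Rallis) shows exactly one $\pi_{\alpha_1}$ is distinguished, Zelevinsky's multiplicity one shows exactly one $\pi_{\alpha_0}$ carries $\theta$, and the whole content of the conjecture is $\alpha_0=\alpha_1$. You are also right that Proposition~\ref{sln} only yields a $(U,\mu_n)$-functional on $\pi_{\alpha_1}$ with $\mu_n$ possibly strictly below $\theta$, which is exactly why the conjecture remains open.

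Where your proposal diverges from the paper is in the list of tractable cases. The paper does not treat the trivial-restriction case, one-dimensional characters, or an $n\leq 2$ analysis via theta correspondence (the theta methods in later sections concern $\U_4$ and $\SO(4,2)$, not $\GL_4$; invoking them here is off-target). What the paper \emph{does} prove is the conjecture for Speh modules $Sp_m(\pi)$ with $m$ even, and the mechanism is different from what you sketch: rather than arguing that the $\mu_n$ produced by the symplectic period is ``maximal'', the paper shows that for a Speh module \emph{every} nonzero $(U,\chi)$-functional already has $\chi$ conjugate to $\theta$. Concretely, any character $\theta_S$ of $U$ determines a parabolic $P=M_SN_S$ with simple roots in $N_S$ indexed by $S=\{i:a_i=0\}$; if $(Sp_m(\pi))_{U,\theta_S}\neq 0$ then the Jacquet module along $N_S$ is nonzero and generic, but the hereditary property of Jacquet modules for Speh modules forces this to happen only for the partition $(d,\ldots,d)$, i.e.\ only for $S$ giving $\theta$. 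Hence the $\mu_n$ supplied by Proposition~\ref{sln} has no choice but to be $\theta$, and $\alpha_1=\alpha_0$.

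So your framework is correct and your diagnosis of the gap is accurate, but your ``cases covered'' miss the one case the paper actually establishes, and the argument there closes the gap not by strengthening the output of Proposition~\ref{sln} but by exploiting the rigidity of degenerate Whittaker types for Speh modules.
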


\begin{remark} From the classification due to Offen-Sayag 
of irreducible admissible unitary representations of $\GL(W_n)$ which are distinguished by $\Sp(W_n)$, which we will 
recall in section 9,  it follows that
the character $\theta$ of Zelevinsky  is of the form $\mu_n$ introduced in Corollary \ref{corollary2}. Further, observe that
the choice of the character $\psi$ in Conjecture 1 is not relevant since conjugation by the diagonal matrix
$$ a_t= \left(\begin{array}{cccccccclc}
 t & 0 & 0    & 0 & 0 & 0 &  0 & 0 & 0\\
 0 & t^2   & 0 &   0  & 0   & 0 &  0 & 0 & 0 \\
 0 & 0   & t^3 &    0   & 0 & 0 &  0 & 0 & 0\\
 0 &   0  &      0 & \ddots  & 0  & 0 & 0 & 0 & 0 \\
0 &   0  &   0     & 0 &   t^n & 0  &  0 & 0 & 0\\
0 &   0  &    0    & 0 & 0 &  t &  0  & 0 & 0\\
0 &    0 &      0  & 0 & 0 & 0 &  \ddots & 0 & 0\\
0 &   0  &      0  & 0 & 0 & 0  & 0 & t^{n-1} & 0\\
 0 & 0 &    0  & 0   & 0    &  0  &  0& 0 & t^n \end{array}\right) $$
is scaling by $t$ on all simple root spaces except the `middle' one (which is not there in $\mu_n$), so acts transitively
on the set of characters $\mu_n$ arising out of different choices of $\psi$, and $a_t$ being in $\GSp(W_n)$, it preserves
distinction by $\Sp(W_n)$.
\end{remark}
\begin{proposition} Conjecture \ref{conj1} is true for the Speh module $Sp_m(\pi)$ 
(where $\pi$ is a cuspidal 
representation of $\GL_d(F)$ and $m$ is even, so that $Sp_m(\pi)$ has symplectic model) which is the unique
irreducible quotient of the principal series representation 
$\pi\cdot \nu^{(m-1)/2} \times \cdots \times \pi\cdot \nu ^{-(m-1)/2}$
of $\GL_{md}(F)$ (parabolic induction from the representation $\pi\cdot \nu^{(m-1)/2} \boxtimes \cdots \boxtimes \pi\cdot \nu ^{-(m-1)/2}$ of the Levi subgroup 
$\GL_d(F) \times \cdots \times \GL_d(F)$).
\end{proposition}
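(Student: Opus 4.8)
The plan is to reduce both properties in question --- being distinguished by $\Sp(W_n)$, and carrying Zelevinsky's linear form $\theta$ --- to statements about \emph{a single} irreducible constituent of the restriction $Sp_m(\pi)|_{\SL(W_n)}$, exploiting that both $\Sp(W_n)$ and the group $U$ of upper triangular unipotent matrices are contained in $\SL(W_n)$.

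First recall the two multiplicity-one inputs. By the classification of Offen--Sayag recalled in section 9, for $m$ even the Speh module $Sp_m(\pi)$ on $\GL(W_n)$ (here $\dim_F W_n = 2n = md$) is distinguished by $\Sp(W_n)$ and $\dim\Hom_{\Sp(W_n)}(Sp_m(\pi),\C) = 1$. By Zelevinsky's theorem quoted above, $\dim\Hom_U(Sp_m(\pi),\theta) = 1$ as well. Write $Sp_m(\pi)|_{\SL(W_n)} = \bigoplus_\alpha \pi_\alpha$, a finite multiplicity-free direct sum. Since $\Sp(W_n) \subseteq \SL(W_n)$ and $U \subseteq \SL(W_n)$, each $\pi_\alpha$ is stable under $\Sp(W_n)$ and under $U$, so
\[\Hom_{\Sp(W_n)}(Sp_m(\pi),\C) = \bigoplus_\alpha \Hom_{\Sp(W_n)}(\pi_\alpha,\C), \qquad \Hom_U(Sp_m(\pi),\theta) = \bigoplus_\alpha \Hom_U(\pi_\alpha,\theta).\]
Each side being one-dimensional, there is a unique index $\alpha_0$ with $\pi_{\alpha_0}$ distinguished by $\Sp(W_n)$, and a unique index $\alpha_1$ with $\pi_{\alpha_1}$ carrying $\theta$; moreover the $\Sp(W_n)$-period on $\pi_{\alpha_0}$ is again one-dimensional. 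This already gives the first assertion of Conjecture \ref{conj1} for $Sp_m(\pi)$, and reduces everything to the equality $\alpha_0 = \alpha_1$.

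To identify the two constituents, apply Proposition \ref{sln} to $\pi_{\alpha_0}$: it carries a nonzero $\mu_n$-equivariant functional for $U$, for some sign vector $\epsilon_i \in \{0,1\}$. For the Speh module $Sp_m(\pi)$ with $\pi$ cuspidal on $\GL_d(F)$, the Bernstein--Zelevinsky derivative sequence is the rectangle $\lambda_1 = \cdots = \lambda_m = d$ (the nonzero derivatives of $Sp_m(\pi)$ occur only in degrees that are multiples of $d$, each again of Speh type), so Zelevinsky's character $\theta$ omits exactly the positions $d, 2d, \dots, (m-1)d$; that is, $\theta$ is precisely the $\mu_n$ with $\epsilon_i = 1$ for $d \nmid i$ and $\epsilon_i = 0$ for $d \mid i$ (this is the instance of the Remark after Conjecture \ref{conj1} that $\theta$ has the form $\mu_n$; note $d \mid n$ here, so the middle entry $x_n$ is indeed absent). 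Granting that the sign vector produced for $\pi_{\alpha_0}$ is forced to be this one, $\pi_{\alpha_0}$ carries a $\mu_n$ of the shape of $\theta$; adjusting the underlying additive character by conjugation by the element $a_t \in \GSp(W_n)$ of that Remark --- which permutes the $\mu_n$ attached to different $\psi$, preserves $\Sp(W_n)$-distinction, and fixes the unique distinguished constituent $\pi_{\alpha_0}$ --- we conclude that $\pi_{\alpha_0}$ carries Zelevinsky's $\theta$ itself, hence $\alpha_0 = \alpha_1$.

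The main obstacle is the rigidity used in the last paragraph: that the Clifford-theoretic induction of Proposition \ref{prop3} and Corollary \ref{corollary2}, run on $Sp_m(\pi)$, can only produce the maximal sign pattern $\epsilon_i = \mathbf{1}[d \nmid i]$. I would prove it by tracking, stage by stage along $W_1 \subset \cdots \subset W_n$, the twisted Jacquet modules that occur: at the $i$-th stage the branch $\epsilon_i = 1$ of Proposition \ref{prop3} is available exactly when a certain twisted Jacquet module of a subquotient built from $Sp_m(\pi)$ is nonzero, and the rectangular derivative structure of $Sp_m(\pi)$ (nonzero derivatives only in degrees divisible by $d$, with $(Sp_m(\pi))^{(jd)}$ again a Speh module) should show that this module is nonzero precisely when $d \nmid i$ and vanishes when $d \mid i$, so that the pattern is not merely available but forced; since the distinguished constituent is unique, every admissible run of the induction reaches $\pi_{\alpha_0}$, which therefore carries the $\theta$-pattern functional. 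An alternative to this computation would be to use Offen--Sayag's explicit realization of the $\Sp(W_n)$-period of $Sp_m(\pi)$ (by a Shalika/Whittaker-type integral) and read off directly that it factors through the $\theta$-degenerate Whittaker model; either route finishes the proof.
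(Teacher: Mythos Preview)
Your overall architecture matches the paper's: decompose $Sp_m(\pi)|_{\SL(W_n)}$, use multiplicity one on both sides to isolate a unique $\alpha_0$ (symplectic-distinguished) and $\alpha_1$ (carrying $\theta$), then invoke Proposition~\ref{sln} to produce some $\mu_n$-functional on $\pi_{\alpha_0}$ and argue that this $\mu_n$ must be $\theta$. The reduction, the use of $a_t$, and the identification of $\theta$ with the sign pattern $\epsilon_i=\mathbf{1}[d\nmid i]$ are all fine.

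The gap is exactly where you flag it: you do not prove the rigidity claim, and both routes you sketch are left unexecuted. Your first proposed route --- running the Clifford induction of Proposition~\ref{prop3} stage by stage and arguing that the branch $\epsilon_i=1$ is \emph{forced} when $d\nmid i$ and \emph{forbidden} when $d\mid i$ --- is more delicate than necessary, because Proposition~\ref{prop3} only tells you \emph{some} branch works at each stage, not that a particular one is forced; turning ``available'' into ``forced'' would require additional vanishing statements at every step.

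The paper closes this gap by a cleaner, non-inductive argument. Since $\pi_{\alpha_0}$ is a direct summand of $Sp_m(\pi)|_{\SL(W_n)}$, any $\mu_n$-functional on $\pi_{\alpha_0}$ yields one on $Sp_m(\pi)$. So it suffices to show that $\theta$ is the \emph{only} character of $U$ (up to conjugacy) carried by the $\GL_{md}(F)$-representation $Sp_m(\pi)$. For any character $\theta_S(u)=\psi(\sum_{i\notin S} a_i u_{i,i+1})$ with zero-set $S$, form the standard parabolic $P=M_SN_S$ whose simple roots in $N_S$ are exactly $\{\alpha_i:i\in S\}$. A $\theta_S$-functional on $Sp_m(\pi)$ forces the Jacquet module $(Sp_m(\pi))_{N_S}$ to be nonzero and \emph{generic} as a representation of $M_S$ (since $\theta_S$ is nondegenerate on $U\cap M_S$). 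Now invoke the hereditary structure of Speh modules: every Jacquet module of $Sp_m(\pi)$ has all its irreducible subquotients given by products of Speh modules $Sp_{m_j}(\pi)$ on the blocks, and such a product is generic only when every block has size $d$. Hence $S=\{d,2d,\ldots,(m-1)d\}$, i.e.\ $\theta_S$ is conjugate to $\theta$. This replaces your stage-by-stage tracking with a single application of the Jacquet-module structure of Speh representations, and completes the proof.
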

\begin{proof}It is known that for the Speh module $Sp_m(\pi)$,
 the integers $\lambda_i$ introduced above are  all equal to $d$, and $k=m$.
Thus the character $\theta$ of Zelevinsky is the character 
of the group $U$ of upper triangular unipotent matrices given by $$\theta(u_{ij}) = \psi(\sum u_{i,i+1}),$$
where $\sum$ runs over all integers $1,2,\cdots, n-1$ except,
$$n-d, n - 2d ,\cdots, n-(m-1)d=d.$$

The main point about the Speh module $Sp_m(\pi)$, 
which we will presently prove,  
being that $\theta$ is the only character (up to conjugacy) 
of the unipotent group $U$ for which there is a $\theta$-invariant linear form. Thus the only character which appears in Proposition
\ref{sln} is $\theta$, proving conjecture \ref{conj1} for the Speh modules $Sp_m(\pi)$. 

To prove the assertion regarding characters of $U$ appearing in the Speh
module $Sp_m(\pi)$, 
note that any character of $U$ is of the form
$$\theta_S(u_{ij}) = \psi(\sum a_iu_{i,i+1}),$$
where $a_i \in F$, and   $S$ is defined to be the set of integers $i$ for which $a_i=0$.  Construct the standard parabolic $P=M_SN_S$ of 
$\GL_n(F)$ such that the only simple root spaces in $N$ are $\alpha_i = e_i-e_{i+1}$ for $i \in S$. The character $\theta_S$
is clearly trivial on $N_S$, and therefore the Jacquet module of $\pi$ with respect to $N_S$ is nonzero, and is in fact
generic.  Now we appeal to the `hereditary' property of Jacquet modules for Speh modules: that the  Jacquet
modules of $Sp_m(\pi)$ are themselves product of Speh modules on $\pi$, and therefore the only nonzero generic 
Jacquet module corresponds to the partition $(d,d,\cdots, d)$ of $md$, proving the assertion on the  
characters of $U$ appearing in the Speh
module $Sp_m(\pi)$.
\end{proof}

\begin{remark} In this final remark of the section, we try to delineate the `group theory' which goes into the proof of 
the main result, Proposition \ref{prop3}. 
The paper [AGR] 
calls a pair $(G,H)$ a vanishing pair, if there are no cuspidal representations of $G$ distinguished by $H$. In this paper
we have proved that $(\U_{2n},\Sp_{2n})$ is a vanishing pair. How did we achieve it? To simplify language, let us  be in the context of algebraic groups over finite fields. We need to use the subgroup $H$ to 
construct the unipotent radical $N$ of a parabolic in $G$ such that a cuspidal 
representation $\pi$ of $G$ distinguished by $H$ is also 
distinguished by $N$ leading to a contradiction to cuspidality of $\pi$. Well, begin with the unipotent radical $N(H)$ of a parabolic in $H$. 
Take its normalizer $P_G(N)$ in $G$, and let $N(G)$ be the  unipotent radical of $P_G(N)$, which clearly contains $N(H)$ as a normal subgroup. Since the 
representation $\pi$ we are considering has a $H$-fixed vector, it certainly has $N(H)$-fixed vectors, and $\pi^{N(H)}$ 
is a module for $P_G(N)/N(H)$. 
In our case, $N(G)/N(H)$ is an abelian group, allowing us to understand $\pi^{N(H)}$ 
as a module for $P_G(N)/N(H)$, in particular also for $N(G)$. The group $N(G)$ is nearer to the unipotent 
radical of a parabolic in $G$ (this is a general theorem of Borel-Tits of going from any unipotent group in $G$ 
to the unipotent radical of a parabolic in $G$ by an iterative 
process of the above kind). We do not quite get distinction by $N(G)$, but by a 
character $\chi$ of $N(G)/N(H)$, whose kernel ${\rm ker}(\chi)$ 
is a codimension one subspace of $N(G)$ (containing $N(H)$), so we are making 
progress. The representation   $\pi^{N(H)}$ of $P_G(N)/N(H)$ is distinguished by $P_G(N) \cap H$. This allows one to get some more 
unipotents    from $H$ to be augmented to ${\rm ker}(\chi)$ to reach towards the desired 
unipotent radical $N$ of a parabolic in $G$.
\end{remark}
\section{Non distinction of Cuspidal automorphic representations}

In this section we prove that for cuspidal automorphic functions $f$ on $\U(n,n)(\A_k)$ we must have:
$$\int_{\Sp_{2n}(k)\backslash \Sp_{2n}(\A_k)} 
f(h) dh = 0.$$

Actually we first prove  what appears to be a stronger result, 
that the period integral of cuspidal automorphic functions $f$ on $\U(n,n)(\A_k)$ on Klingen mirabolic $Q^1_n$ 
of $\Sp_{2n}$ is zero:
$$\int_{Q^1_n(k)\backslash Q^1_n(\A_k)} f(h) dh = 0;$$
however,  
vanishing of the symplectic period is not a formal consequence of this. 
For our local theorem, this was no issue: if there are no invariant linear
forms for the Klingen mirabolic, a fortiori, there are none for the larger symplectic group. In the global situation, because
we are dealing with integration on $\Sp_{2n}(k)\backslash \Sp_{2n}(\A_k)$ versus  integration on 
$Q^1_{n}(k)\backslash Q^1_{n}(\A_k)$, we are not quite in a context 
to be able to use Fubini's theorem, and such a conclusion is not obvious, and is effected using an Eisenstein series,
a trick that we learnt from [AGR].

The proof of vanishing of
period integral on Klingen mirabolic, 
will follow closely our local proof. We will also follow exactly the same notation as 
there, thus $W_i$ will be symplectic vector space over $k$ with basis 
$\langle e_i,\cdots, e_1, f_1,\cdots, f_i\rangle$ 
with the symplectic form $\langle -,- \rangle $ with the property that $\langle e_j, f_k\rangle = \delta_{jk} = -\langle f_k, e_j\rangle$, and with all the other products zero. 
The symplectic spaces $W_i$ form a nested sequence of vector spaces with $W_1 \subset W_2 
\subset \cdots \subset W_n$. Given a symplectic space $W$ over $k$, and $K/k$ a quadratic extension, 
we have a skew-hermitian space 
$W_K= W\otimes K$ over $K$ which can be used to 
define a unitary group    $\U(W_K)$ with $\Sp(W) \subset \U(W_K)$.  

 We begin with a {\it global  analogue} of 
the Clifford theory. In fact, in the local theory, one could separate the role of Clifford theory, Mackey theory and the
Frobenius reciprocity, which together allow one to understand when a representation of 
$G = A \rtimes H$ has an $H$-invariant linear form. In the global context, the three steps will merge into one, and we will directly find when an automorphic representation of $G$ has nonzero period integral along $H$.

Let $G = A \rtimes H$ 
be a semi-direct product of algebraic groups over a global field $k$ where $A\cong k^d$ for some integer
$d$. 
Fix $\psi_0: 
\A_k/k\rightarrow \C^\times$ to be a nontrivial character. For any linear map $\ell: A\rightarrow k$, we get an automorphic character
$\psi= \psi_0 \circ \ell: A(\A_k)/A(k) \rightarrow \C^\times$, 
and all automorphic characters on $A(\A_k)/A(k)$ are of this form; i.e.,  characters on $A(\A_k)/A(k)$ are in bijective
correspondence with the dual vector space  $A^\vee(k)$ of the vector space $A$ over $k$.

Let $H_\ell$ be the stabilizer in $H$ of a linear map $\ell: A \rightarrow k$. Then $H_\ell$ is an algebraic subgroup of $H$ defined
over $k$ such that $H_\ell(k) = H_\psi(k)$ is the stabilizer of the 
automorphic character $\psi= \psi_0 \circ \ell: A(\A_k)/A(k) \rightarrow \C^\times$. We will assume in what follows that 
$H(k)\backslash H(\A_k)$, as well as $H_\psi(k)\backslash H_\psi(\A_k)$ have finite measures for all characters 
$\psi= \psi_0 \circ \ell: A(\A_k)/A(k) \rightarrow \C^\times$.

For a function $f$ on $G(k)\backslash G(\A_k)$, 
define its Fourier coefficient $ f_\psi$ to be the function on 
$H_\psi(k)\backslash H_\psi(\A_k)$ 
defined by:
$$f_\psi(h) = \int_{A(\A_k)/A(k)}f(ah) \psi(a)da,$$
where $da$ is a Haar measure on $A(\A_k)/A(k)$. Taking Fourier coefficients gives an $H_\psi(\A_k)$-equivariant map
from smooth functions on $G(k)\backslash G(\A_k)$ to  smooth functions on $H_\psi(k)\backslash H_\psi(\A_k)$:
$${\mathcal F}_\psi: C^\infty(G(k)\backslash G(\A_k)) \rightarrow C^\infty(H_\psi(k)\backslash H_\psi(\A_k)).$$

It will be important to note that ${\mathcal F}_\psi$ 
takes bounded functions in 
$C^\infty(G(k)\backslash G(\A_k))$ 
to bounded functions in $ C^\infty(H_\psi(k)\backslash H_\psi(\A_k))$. Since
${\mathcal F}_\psi$ commutes with $H_{\psi}(\A_k)$, if we have a space $\pi$ of bounded 
functions $C^\infty(G(k)\backslash G(\A_k))$ invariant under differential operators coming from 
$G(k\otimes \R)$, in particular from $H_\psi(k\otimes \R)$, 
the image of ${\mathcal F}_\psi$ under $\pi$ will consist of bounded functions in  
$C^\infty(H_\psi(k)\backslash H_\psi(\A_k))$ invariant under differential operators coming
 from $H_\psi(k\otimes \R)$.
 
\begin{proposition}\label{basic}
With the notation as above (in particular $G=A\rtimes H$, a semi-direct product of algebraic groups over a global field $k$ 
with $A$ a vector space over $k$, and $H_\psi(k)\backslash H_\psi(\A_k)$ have finite measures for all characters 
$\psi= \psi_0 \circ \ell: A(\A_k)/A(k) \rightarrow \C^\times$), 
let $\pi$ be a space of smooth functions on $G(k)\backslash G(\A_k)$ which is $G(\A_k)$-invariant. Suppose $\pi$ 
consists of bounded functions  such that the  period integral on $H(k)\backslash H(\A_k)$ is not identically zero on functions in $\pi$. 
Then there is a function $f \in \pi, $ and a character $\ell: A \rightarrow k$ 
for which the Fourier coefficient $f_\ell = f_\psi$ defined above to be a function on 
$H_\psi(k)\backslash H_\psi(\A_k)$ has nonzero period integral on $H_\psi(k)\backslash H_\psi(\A_k)$.
\end{proposition}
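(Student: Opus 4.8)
The plan is to mimic the argument that proves nonvanishing of Fourier coefficients for Whittaker models, using the Poisson summation formula on the vector group $A$ together with a boundedness/dominated-convergence argument to justify interchanging integration over $A(\A_k)/A(k)$ with integration over $H(k)\backslash H(\A_k)$. Concretely, suppose $f \in \pi$ has nonzero period integral over $H(k)\backslash H(\A_k)$; I want to produce a character $\ell : A \to k$ (possibly $\ell = 0$) and a function in $\pi$ whose $\ell$-th Fourier coefficient has nonzero period over $H_\psi(k)\backslash H_\psi(\A_k)$.

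First I would write, for $h \in H(\A_k)$, the expansion of $h \mapsto \int_{A(\A_k)/A(k)} f(ah)\, da$-type data by Fourier analysis on the compact abelian group $A(\A_k)/A(k)$: the function $a \mapsto f(ah)$ is a smooth function on $A(\A_k)/A(k)$, hence equals the sum of its Fourier coefficients
$$f(ah) = \sum_{\ell \in A^\vee(k)} f_\psi(h)\, \overline{\psi(a)},\qquad \psi = \psi_0 \circ \ell,$$
convergent appropriately (absolutely after smoothing by the action of $A(\A_k)$, which keeps us inside $\pi$; since $\pi$ is $G(\A_k)$-stable we are free to replace $f$ by a smooth vector $\pi(\phi)f$). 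Evaluating at $a = 1$ and integrating over $H(k)\backslash H(\A_k)$, the left side gives the nonzero period of $f$ (for a suitable choice of smoothing $f$ still has nonzero period, as the period functional is continuous and $G(\A_k)$-equivariant, so it is nonzero on a dense set of smooth vectors). On the right side I would like to get
$$\int_{H(k)\backslash H(\A_k)} f(h)\, dh = \sum_{[\ell]} \int_{H_\psi(k)\backslash H_\psi(\A_k)} f_\psi(h)\, dh,$$
where the sum is over $H(k)$-orbits of linear forms $\ell$ — using the standard unfolding $\int_{H(k)\backslash H(\A_k)} = \sum_{[\ell]} \int_{H_\psi(k)\backslash H_\psi(\A_k)}$ coming from the fact that $H(k)$ permutes the characters $\psi$ with $H_\psi(k)$ the stabilizer, exactly the global counterpart of Mackey theory plus Frobenius reciprocity merged into one step as the preamble to the proposition announces. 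Since the left side is nonzero, at least one summand on the right is nonzero, which is precisely the conclusion: there is a character $\ell$ with $f_\psi$ having nonzero period on $H_\psi(k)\backslash H_\psi(\A_k)$.

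The main obstacle is analytic: justifying the interchange of the (infinite) sum over $\ell \in A^\vee(k)$ with the integral over $H(k)\backslash H(\A_k)$, and justifying the unfolding, given only that the functions in $\pi$ are bounded rather than, say, rapidly decreasing. This is exactly why the hypotheses are stated the way they are — that $H(k)\backslash H(\A_k)$ and each $H_\psi(k)\backslash H_\psi(\A_k)$ have \emph{finite} measure, and that $\mathcal{F}_\psi$ carries bounded functions to bounded functions (noted explicitly just before the proposition). With finite measure, a bounded function is integrable, and the term-by-term integration over the orbit decomposition is legitimate by Fubini once one knows the relevant iterated integral of the absolute value is finite; the boundedness of $f$ transported through $\mathcal{F}_\psi$ handles the inner integrals. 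I would handle convergence of the Fourier expansion by first replacing $f$ with $\pi(\phi)f$ for $\phi$ an idempotent-like smoothing function supported near the identity in $A(\A_k)$ (legitimate since $\pi$ is $G(\A_k)$-stable), which makes $a \mapsto (\pi(\phi)f)(ah)$ a finite Fourier sum on $A(\A_k)/A(k)$ — or, if one prefers, arises from a Schwartz function on the finite adelic part times a smooth function at infinity, so that Poisson summation applies without convergence subtleties. Then no infinite interchange is needed at all: the expansion is a finite sum, each term integrates over the appropriate quotient by finiteness of measure, and since the total is nonzero some term is nonzero. The one point requiring a little care is that smoothing by $\phi$ on $A$ might in principle kill the $H$-period; but a generic choice of $\phi$ (or an approximate-identity argument: $\pi(\phi_i)f \to f$ and the $H$-period is continuous) preserves nonvanishing, so we may pass to such an $f$ from the outset.
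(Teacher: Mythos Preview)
Your overall strategy matches the paper's: Fourier-expand $f(ah)$, set $a=1$, integrate over $H(k)\backslash H(\A_k)$, and unfold by $H(k)$-orbits on characters. But there are two genuine gaps.

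\textbf{The unfolding is misstated.} After grouping characters into $H(k)$-orbits, what one actually gets for a single orbit $[\psi]$ is
\[
\int_{H(k)\backslash H(\A_k)}\sum_{\psi'\in H(k)\cdot\psi} f_{\psi'}(h)\,dh
=\int_{H_\psi(k)\backslash H(\A_k)} f_\psi(h)\,dh
=\int_{H_\psi(\A_k)\backslash H(\A_k)}\Big[\int_{H_\psi(k)\backslash H_\psi(\A_k)} f_\psi(hh')\,dh\Big]dh',
\]
not $\int_{H_\psi(k)\backslash H_\psi(\A_k)} f_\psi(h)\,dh$ as you wrote. The inner integral is the $H_\psi$-period of the \emph{translate} of $f$ by $h'$, and one then invokes the $G(\A_k)$-invariance of $\pi$ to replace $f$ by this translate. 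This is exactly the step in the paper you are missing; your displayed identity is not correct as stated.

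\textbf{The convergence argument via smoothing does not do what you claim.} Over a number field, smoothing by $\phi$ on $A(\A_k)$ does \emph{not} make $a\mapsto(\pi(\phi)f)(ah)$ a finite Fourier sum: invariance under a compact open in $A(\A_f)$ still leaves a lattice of characters at the archimedean places. Your hedge (``Schwartz at finite places times smooth at infinity, so Poisson summation applies without convergence subtleties'') is not an argument. The paper handles exactly this point in the two lemmas following the proposition: one uses that $f$ and all its archimedean derivatives along $A$ are bounded, together with Parseval and Cauchy--Schwarz, to get $\sum_\ell\int_X|f_\ell|<\infty$ and hence dominated convergence. (In the function-field case the Fourier sum \emph{is} finite, as the paper also notes.) So your smoothing idea, by itself, is not enough; you still need the bounded-derivatives estimate to justify the interchange.
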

\begin{proof}
Let us  begin with the Fourier expansion:
$$f(ah) = \sum_{\psi:A(\A_k)/A(k) \rightarrow \C^\times} f_\psi(h) \psi(a),$$
where $\psi$ runs over all automorphic characters $\psi: A(\A_k)/A(k) \rightarrow \C^\times$ which as noted earlier 
all arise as $\psi= \psi_0 \circ \ell: A(\A_k)/A(k) \rightarrow \C^\times$ for a linear map $\ell: A \rightarrow k$.

Evaluating the Fourier expansion at $a=1$,
$$f(h) = \sum_{\psi} f_\psi(h),$$
hence,
\begin{eqnarray*}
\int_{H(k)\backslash H(\A_k)} f(h) dh  & = & \int_{H(k)\backslash H(\A_k)}  \sum_{\psi} f_\psi(h) dh \\
& = & \sum_{\psi} \int_{H(k)\backslash H(\A_k)}  
f_\psi(h) dh. 
\end{eqnarray*}

We need to justify interchanging summation and integration above which we shall do  separately in the next two Lemmas so as not to disrupt the flow of argument here.

Combining characters 
$\psi= \psi_0 \circ \ell: A(\A_k)/A(k) \rightarrow \C^\times$ for a linear map $\ell: A \rightarrow k$ 
which are in one orbit for $H(k)$ --- the set of $H(k)$ orbits of such characters being the quotient set 
$A^\vee(k)/H(k)$ --- we find that:
\begin{eqnarray*}
\int_{H(k)\backslash H(\A_k)} f(h) dh  & = & \sum_{\psi} \int_{H(k)\backslash H(\A_k)}  f_\psi(h) dh \\ 
& = & 
\sum_{\psi \in A^\vee(k)/H(k)}\int_{H_{\psi}(k)\backslash H(\A_k)}  f_\psi(h)dh \\
& = & \sum_{\psi \in A^\vee(k)/H(k) }\int_{H_\psi(\A_k)\backslash H(\A_k)} \left [ \int_{H_{\psi}(k)\backslash H_{\psi}(\A_k)}  f_\psi(hh')dh \right ] dh'. \end{eqnarray*}
Therefore if the period integral on $H(k)\backslash H(\A_k)$ is nonzero, so must the inner integral 
$$\int_{H_{\psi}(k)\backslash H_{\psi}(\A_k)}  f_\psi(hh')dh, $$ 
too for some $h' \in H(\A_k)$ and some automorphic character $\psi$ on $A(k)\backslash A(\A_k)$. Since the space of functions in $\pi$ is right invariant under $H(\A_k)$, 
this proves the proposition.
\end{proof}

The following two lemmas justify interchanging summation and integration used above in case $k$ is a 
number field. If $k$ is a function field, cusp forms are known to be locally constant compactly supported functions, thus we will be dealing with 
 finite Fourier expansion in which case interchanging summation and integration is not an issue.

\begin{lemma}Suppose $f(x,t)$ is a function on $X \times T = X \times (\R/\Z)^d$ where $X$ is a measure space. Assume 
that $f(x,t)$ is infinitely differentiable as a function of $t \in T$, for all $x \in X$, $t \in (\R/\Z)^d$. 
Assume that $f$ as well as all its derivatives (with constant coefficients) 
along $(\R/\Z)^d$ are bounded as a function on $X \times T$, and that $X$ has finite measure. Then
$\sum_{\underline{n}} \int_X f_{\underline{n}}(x)dx$ is an absolutely convergent series, and
$$\int_X f(x,0) dx= \sum_{\underline{n}} \int_X f_{\underline{n}}(x)dx,$$
where for $\underline{n}=(n_1,\cdots, n_d) \in \Z^d$, $\underline{t}=(t_1,\cdots, t_d) \in (\R/\Z)^d = T$, 
$f_{\underline{n}}(x)$ 
is the $\underline{n}$-th Fourier coefficient defined by:
$$ f_{\underline{n}}(x) = \int_{(\R/\Z)^d}f(x,\underline{t}) e^{2\pi i (\sum_k n_kt_k)} dt_1\cdots dt_d.$$
\end{lemma}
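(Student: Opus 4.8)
The plan is to reduce the lemma to two standard facts about Fourier series on the torus $T = (\R/\Z)^d$: first, that for each fixed $x \in X$ the Fourier series of the smooth function $t \mapsto f(x,t)$ converges absolutely and uniformly to it, so in particular $f(x,0) = \sum_{\underline n} f_{\underline n}(x)$; and second, a decay estimate for the coefficients $f_{\underline n}(x)$ that holds \emph{uniformly in} $x$. The second point is what makes the interchange of $\sum_{\underline n}$ and $\int_X$ legitimate, and it is exactly here that the hypothesis that \emph{all} $t$-derivatives of $f$ are bounded on $X\times T$ gets used.

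First I would establish the uniform decay estimate by integrating by parts in the torus variables. Let $\Delta_t = \sum_{k=1}^{d}\partial_{t_k}^2$ be the Laplacian in the $t$-variables, and fix an integer $m$ with $2m > d$. Since $T$ has no boundary, repeated integration by parts in the integral defining $f_{\underline n}(x)$ gives
$$\left(1 + 4\pi^2\sum_{k=1}^{d} n_k^2\right)^{m} f_{\underline n}(x) \;=\; \int_{T}\bigl((I-\Delta_t)^m f\bigr)(x,\underline t)\, e^{2\pi i \sum_k n_k t_k}\, d\underline t .$$
Now $(I-\Delta_t)^m$ is a fixed constant-coefficient differential operator of order $2m$ in the $t$-variables, so $(I-\Delta_t)^m f$ is a finite linear combination of $t$-derivatives of $f$ of order $\le 2m$, each bounded on $X\times T$ by hypothesis; since $|e^{2\pi i \sum_k n_k t_k}| = 1$ and $T$ has total mass $1$, there is a constant $C_m < \infty$ independent of $x$ and $\underline n$ with
$$|f_{\underline n}(x)| \;\le\; \frac{C_m}{\bigl(1 + 4\pi^2|\underline n|^2\bigr)^{m}}\qquad\text{for all } x\in X,\ \underline n\in\Z^d,$$
where $|\underline n|^2 = n_1^2 + \cdots + n_d^2$. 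Because $2m > d$, the series $S_m := \sum_{\underline n\in\Z^d}\bigl(1 + 4\pi^2|\underline n|^2\bigr)^{-m}$ converges.

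Next I would deduce the two assertions of the lemma. Writing $|X|$ for the (finite) measure of $X$, the estimate above gives
$$\sum_{\underline n\in\Z^d}\int_X |f_{\underline n}(x)|\, dx \;\le\; C_m\, |X|\sum_{\underline n\in\Z^d}\bigl(1 + 4\pi^2|\underline n|^2\bigr)^{-m} \;=\; C_m\, S_m\, |X| \;<\; \infty,$$
which is the claimed absolute convergence. It also shows that for each $x$ the partial sums of $\sum_{\underline n} f_{\underline n}(x)$ are dominated by the constant $C_m S_m$, which is integrable over $X$; since those partial sums converge pointwise to $f(x,0)$ (absolute convergence of the Fourier series of the smooth function $f(x,\cdot)$, evaluated at $\underline t = 0$), the dominated convergence theorem — equivalently, Fubini--Tonelli for the now absolutely convergent double sum/integral over $X \times \Z^d$ — permits the interchange
$$\sum_{\underline n\in\Z^d}\int_X f_{\underline n}(x)\, dx \;=\; \int_X \sum_{\underline n\in\Z^d} f_{\underline n}(x)\, dx \;=\; \int_X f(x,0)\, dx,$$
completing the proof.

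I do not expect a genuine obstacle here, the statement being soft; the only points needing attention are to take the order of differentiation large enough ($2m > d$) that the Fourier coefficients are summable over the full lattice $\Z^d$ rather than merely in one coordinate direction, and to observe that the uniform boundedness of \emph{all} $t$-derivatives of $f$ — not just of $f$ itself — is precisely what makes $C_m$ independent of $x$, which is what decouples the $x$-integration from the $\underline n$-summation. (Joint measurability of $(x,\underline n)\mapsto f_{\underline n}(x)$, needed for Fubini, is immediate from joint measurability of $f$.)
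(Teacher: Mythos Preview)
Your proof is correct, but the route differs from the paper's. The paper argues only the case $d=1$: it applies Cauchy--Schwarz in the form
\[
\sum_{n\neq 0}|f_n(x)| \le \Bigl(\sum_{n\neq 0}|n f_n(x)|^2\Bigr)^{1/2}\Bigl(\sum_{n\neq 0} n^{-2}\Bigr)^{1/2},
\]
identifies $n f_n(x)$ as the Fourier coefficients of $\partial f/\partial t$, and then invokes Parseval to bound $\sum_n |n f_n(x)|^2$ by $\int_T |\partial f/\partial t|^2\,dt$, which is uniformly bounded in $x$; dominated convergence finishes as in your argument. Your approach instead integrates by parts against $(I-\Delta_t)^m$ to obtain the direct decay bound $|f_{\underline n}(x)|\le C_m(1+4\pi^2|\underline n|^2)^{-m}$. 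This is cleaner for general $d$, which the paper leaves to the reader, and avoids Parseval entirely; the price is that you use derivatives of order $2m>d$ rather than just the first derivative as in the paper's $d=1$ argument, though under the stated hypothesis (all $t$-derivatives bounded) this costs nothing.
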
 
\begin{proof} We give a proof only for $d=1$. By the Cauchy-Schwarz inequality, we have:
$$\sum_{n\not = 0}|a_n| \leq \left ( \sum_{n \not = 0}|na_n|^2 \right )^{1/2} \left ( \sum_{n \not = 0} \frac{1}{n^2}\right )^{1/2}.$$
If $a_n$ are the Fourier coefficients of a $C^\infty$-function $f(t)$ on $\R/\Z$, $na_n$ are the Fourier coefficients 
of the function $\frac{df}{dt}$. Therefore using Parseval's (= Plancherel) theorem,

\begin{eqnarray*}
\int_X (\sum_{n\not = 0}|f_n(x)|)dx &\leq&  \sqrt{\frac{\pi^2}{3}} 
\sqrt{ \int_X (\sum_{n \not = 0}|nf_n(x)|^2)dx} \\
& \leq & \sqrt{\frac{\pi^2}{3}} \sqrt{\int_X \left |\frac{df}{dt}(x,t) \right |^2dx dt} \\
& < & \infty,
\end{eqnarray*}
where the last conclusion is arrived at  because $\frac{df}{dt}$ is a bounded function on $X \times T$, and $X$ has finite measure.

Finally, because $X$ is assumed to have finite measure, the Lebesgue dominated convergence theorem allows us to interchange 
summation and integration above.
\end{proof}

Following is the adelic analogue of the previous lemma which can be easily deduced from it, but 
we shall not do so here. In this lemma, we will use the standard notion of a `smooth' function on $\A_k^d$ built out of 
characteristic functions of (translates of) compact open subgroups of finite part of the adele group, and smooth functions at infinity.
 
\begin{lemma}Suppose $f(x,t)$ is a function on $X \times T = X \times (k\backslash \A_k)^d$ where $X$ is a measure space. Assume 
that $f(x,t)$ is `smooth'  as a function of $t \in \A_k^d$, for all $x \in X$, $t \in \A_k^d$. 
Assume that $f$ as well as all its derivatives (with constant coefficients) 
along $(k\backslash \A_k)^d$ are bounded as a function on $X \times (k \backslash \A_k)^d$, and that $X$ has finite measure. Then
$\sum_{y \in k^d} \int_X f_{y}(x)dx$ is an absolutely convergent series, and
$$\int_X f(x) dx= \sum_{y \in k^d} \int_X f_{y}(x)dx,$$
where for $y 
=(y_1,\cdots, y_d) \in k^d $, $\underline{t}=(t_1,\cdots, t_d) \in \A_k^d$, 
$f_{y}(x)$ 
is the Fourier coefficient defined by:
$$ f_{y}(x) = \int_{(k\backslash \A_k)^d}f(x,\underline{t}) \psi(\sum_k y_kt_k) dt_1\cdots dt_d.$$
\end{lemma}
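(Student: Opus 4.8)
The plan is to \emph{reduce this statement to the preceding lemma}, by pushing $f$ down to a finite-dimensional real torus. When $k$ is a function field there is nothing to prove: the automorphic functions to which the lemma will be applied are then locally constant and compactly supported on $(k\backslash\A_k)^d$, so the Fourier expansion has only finitely many nonzero terms. So assume henceforth that $k$ is a number field; write $k_\infty:=k\otimes_\Q\R$ and let $\A_{k,f}$ denote the ring of finite adeles of $k$. Since $f(x,t)$ is smooth in $t$, it is invariant under some compact open subgroup $U\subseteq\A_{k,f}^{\,d}$, and we may take $U$ independent of $x$ (this is automatic in the automorphic application, where $f$ is a fixed automorphic function and so has a finite level). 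Hence $f$ descends to a function $\bar f$ on $X\times Q$, where $Q:=\A_k^{\,d}/(k^{\,d}+U)$.

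The first --- and main --- step is to check that $Q$ is a finite-dimensional real torus. Since $k$ is dense in $\A_{k,f}$ and $U$ is open, we have $k^{\,d}+U=\A_{k,f}^{\,d}$, so the composite $k_\infty^{\,d}\hookrightarrow\A_k^{\,d}\twoheadrightarrow Q$ is surjective; its kernel is $k^{\,d}\cap U$ (those $\gamma\in k^{\,d}$ whose finite-adelic components lie in $U$), which is a full lattice in $k_\infty^{\,d}$, commensurable with $\mathcal O_k^{\,d}$. Thus $Q\cong k_\infty^{\,d}/(k^{\,d}\cap U)\cong(\R/\Z)^{N}$ with $N=d[k:\Q]$: a torus built purely out of the archimedean directions. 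Because $U$ lies in the totally disconnected factor $\A_{k,f}^{\,d}$, the quotient map does not interfere with these archimedean directions, so smoothness of $f$ and boundedness of its archimedean constant-coefficient derivatives along $(k\backslash\A_k)^d$ descend to $C^\infty$-ness of $\bar f$ and boundedness of its constant-coefficient derivatives along $Q$. Finally, by self-duality of the adeles relative to $\psi_0$, the Pontryagin dual of $Q$ is identified with the lattice
\[
L\;:=\;\bigl\{\,y\in k^{\,d}\ :\ \psi_0\bigl({\textstyle\sum_i y_iu_i}\bigr)=1\ \text{for all }u\in U\,\bigr\}\subseteq k^{\,d}.
\]

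Next I would apply the preceding lemma to $\bar f$ on $X\times Q\cong X\times(\R/\Z)^{N}$; its hypotheses hold by the previous paragraph together with the assumption that $X$ has finite measure. It gives absolute convergence of $\sum_{\chi\in\widehat Q}\int_X\bar f_\chi(x)\,dx$ and the identity $\int_X\bar f(x,0)\,dx=\sum_{\chi\in\widehat Q}\int_X\bar f_\chi(x)\,dx$. It would then remain to transport this back to $f$. One has $\int_X\bar f(x,0)\,dx=\int_X f(x,0)\,dx$; for $y\in k^{\,d}\setminus L$ one checks $f_y\equiv0$, because the substitution $t\mapsto t+u$ with $u\in U$ in the integral defining $f_y$ multiplies it by $\psi_0(-\sum_i y_iu_i)$, a number $\ne1$ for some $u$ when $y\notin L$; and for $y\in L$ the integrand $f(x,t)\psi_0(\sum_i y_it_i)$ is pulled back from $Q$, so $f_y=\bar f_{\chi_y}$ with $\chi_y\in\widehat Q$ the character attached to $y$ (all Haar measures on the compact groups in play being normalized to total mass one, compatibly with the preceding lemma). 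Hence $\sum_{y\in k^{\,d}}\int_X f_y(x)\,dx=\sum_{y\in L}\int_X f_y(x)\,dx$ converges absolutely and equals $\int_X f(x,0)\,dx$, as asserted.

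The step I expect to be the main obstacle is the first one: proving rigorously that $Q$ is an honest finite-dimensional real torus (the approximation and lattice statements), and the intuitively clear but slightly delicate point that adelic smoothness of $f$, with bounded derivatives, descends to a genuinely $C^\infty$ function on $Q$ with bounded constant-coefficient derivatives --- together with the minor remark that the level $U$ may be chosen uniformly in $x$, harmless in the intended application. Everything after that is a formal consequence of the preceding lemma and the vanishing $f_y\equiv0$ for $y\notin L$.
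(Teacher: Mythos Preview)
The paper does not actually prove this lemma: it simply asserts that it ``can be easily deduced from'' the preceding real-torus lemma and leaves it at that. Your proposal carries out precisely this deduction, and the argument is correct --- the key structural fact being that for a number field, quotienting $\A_k^{\,d}$ by $k^{\,d}$ and a compact open $U\subseteq\A_{k,f}^{\,d}$ leaves a real torus $k_\infty^{\,d}/(k^{\,d}\cap U)\cong(\R/\Z)^{d[k:\Q]}$, on which the archimedean derivatives survive unchanged, so the hypotheses of the previous lemma are inherited.

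The one point you flag --- that the level $U$ must be chosen uniformly in $x$ --- is a genuine caveat: the lemma as \emph{stated} allows the level to vary with $x$, and your reduction does not cover that generality. The paper does not address this either. As you correctly observe, in the only application (Proposition~\ref{basic}) the function $f$ is a fixed smooth automorphic form, hence has a fixed finite level, so the uniformity is automatic and the argument goes through.
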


The purpose of the following lemma and its corollary is to have the most obvious relationship between period integrals 
on a group and any of its normal subgroups.

\begin{lemma} Let $G$ be an algebraic  group over a global field $k$, and $N$ a normal algebraic subgroup of $G$ defined
 over $k$. We assume that for all algebras ${\mathfrak a} \supset k$, $(N\backslash G )({\mathfrak a})= N({\mathfrak a})\backslash G({\mathfrak a})$. 
Then for $L^1$-functions $f$ on $G(k)\backslash G(\A_k)$, and for an appropriate choice of right Haar measures, 
we have,
$$ \int_{G(k)\backslash G(\A_k)} f(g) dg = \int_{(N\backslash G)(k)\backslash (N \backslash G)(\A_k)} \left [ \int_{N(k)\backslash N(\A_k)}f(n\bar{g})dn \right ] d\bar{g}.$$
\end{lemma}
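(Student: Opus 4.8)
The plan is to deduce the identity from the classical Weil (Fubini) formula for integration over quotients of locally compact groups; the one substantive new input is the identification of the three quotient spaces that occur, and this is exactly what the hypothesis $(N\backslash G)(\mathfrak a)=N(\mathfrak a)\backslash G(\mathfrak a)$ provides. First I would note that, $N$ being normal, $\bar G:=N\backslash G$ is again a linear algebraic group over $k$, sitting in a short exact sequence $1\to N\to G\xrightarrow{q}\bar G\to 1$. Feeding the hypothesis into this with $\mathfrak a=k$ and with $\mathfrak a=\A_k$ shows that $q$ is surjective on $k$-points and on $\A_k$-points, so that $1\to N(k)\to G(k)\to\bar G(k)\to 1$ and $1\to N(\A_k)\to G(\A_k)\to\bar G(\A_k)\to 1$ are exact; in particular $N(k)=G(k)\cap N(\A_k)$ inside $\mathcal G:=G(\A_k)$, and, since $\bar G(k)$ is discrete (hence closed) in $\bar G(\A_k)$, the subgroup $\Gamma':=G(k)\,N(\A_k)=q^{-1}(\bar G(k))$ is a closed subgroup of $\mathcal G$ containing the discrete subgroup $\Gamma:=G(k)$.

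Next I would record the two homeomorphisms that do the work: $q$ induces $\Gamma'\backslash\mathcal G\xrightarrow{\sim}\bar G(k)\backslash\bar G(\A_k)=(N\backslash G)(k)\backslash(N\backslash G)(\A_k)$, while $n\mapsto G(k)n$ induces $N(k)\backslash N(\A_k)\xrightarrow{\sim}G(k)\backslash\Gamma'=\Gamma\backslash\Gamma'$, the latter being a bijection precisely because $\Gamma\cap N(\A_k)=N(k)$. Granting these, I would apply Weil's integration formula to the chain of closed subgroups $\Gamma\subseteq\Gamma'\subseteq\mathcal G$, with $\Gamma$ discrete: for $f\in L^1(\Gamma\backslash\mathcal G)$ and compatibly normalized right Haar measures, $\int_{\Gamma\backslash\mathcal G}f(g)\,dg=\int_{\Gamma'\backslash\mathcal G}\bigl(\int_{\Gamma\backslash\Gamma'}f(\gamma g)\,d\gamma\bigr)\,dg$. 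Rewriting the inner integral through $N(k)\backslash N(\A_k)\cong\Gamma\backslash\Gamma'$ turns it into $\int_{N(k)\backslash N(\A_k)}f(ng)\,dn$, and rewriting the outer integral through $\Gamma'\backslash\mathcal G\cong(N\backslash G)(k)\backslash(N\backslash G)(\A_k)$ --- after fixing a measurable section of $q$ so that ``$f(n\bar g)$'' has a meaning --- gives exactly the asserted formula. The clause ``for an appropriate choice of right Haar measures'' is just the standard multiplicativity of Haar measure along $1\to N(\A_k)\to G(\A_k)\to\bar G(\A_k)\to 1$ (with counting measures on the $k$-points), which makes the two applications of Weil's formula consistent.

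The step I expect to cost real work is checking that all the quotients in sight actually carry invariant measures and that the inner integrand $g\mapsto\int_{N(k)\backslash N(\A_k)}f(ng)\,dn$ descends to a function on $\Gamma'\backslash\mathcal G$. Both reduce to product-formula computations of modular characters. On one hand, for $\gamma\in G(k)$ the modulus of $G(\A_k)$ equals $\prod_v$ of the moduli of the $G(k_v)$, each the value at $\gamma$ of $|\cdot|_v$ on a fixed $k$-rational character of $G$, so the product is $1$; hence $\Gamma\backslash\mathcal G$ --- and likewise $\bar G(k)\backslash\bar G(\A_k)$ and $N(k)\backslash N(\A_k)$ --- admits an invariant measure. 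On the other hand, replacing $g$ by $\gamma g$ for $\gamma\in G(k)$ and using $n\gamma=\gamma(\gamma^{-1}n\gamma)$ together with the left $G(k)$-invariance of $f$ reduces the descent claim to the fact that conjugation by $\gamma$ preserves the Haar measure of $N(\A_k)$ (it certainly preserves the lattice $N(k)$), and this in turn follows from $\prod_v|\det\mathrm{Ad}(\gamma)|_v=1$ on the Lie algebra of $N$, again by the product formula since $\det\mathrm{Ad}(\gamma)\in k^\times$. Once this modular bookkeeping and the routine $L^1$-convergence statement underlying Weil's formula are in place, everything else is formal.
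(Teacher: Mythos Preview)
Your proof is correct and is the standard derivation of this Fubini-type identity from Weil's integration formula for nested closed subgroups, with the hypothesis on $(N\backslash G)(\mathfrak{a})$ supplying exactly the surjectivity needed to identify the quotients. The paper itself offers no proof of this lemma; it is stated as a standard fact and used immediately via its corollary, so there is nothing to compare your argument against.
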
 
\begin{corollary} \label{cor4}
Let $G$ be an algebraic  group over a global field $k$, and $N$ a normal algebraic subgroup of $G$ defined
 over $k$. Then for a space $V$ of $L^1$-functions on $G(k)\backslash G(\A_k)$ 
which is invariant under right translations by $G(\A_k)$,
if the period integral on $G(k)\backslash G(\A_k)$ is not identically zero on $V$,
then the period integral on $N(k)\backslash N(\A_k)$ is also not identically zero on $V$.
\end{corollary}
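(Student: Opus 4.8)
The plan is to deduce this immediately from the preceding Lemma by contraposition, so essentially nothing new is needed. Suppose the period integral along $N(k)\backslash N(\A_k)$ were identically zero on $V$; I will show the period integral along $G(k)\backslash G(\A_k)$ is then also identically zero on $V$, which is the contrapositive of the asserted implication.

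The first step is to upgrade the vanishing hypothesis using the $G(\A_k)$-invariance of $V$: for $f\in V$ and $g\in G(\A_k)$ the right translate $x\mapsto f(xg)$ again lies in $V$ (and is again $L^1$, since we work with a right-invariant measure on $G(k)\backslash G(\A_k)$), so its period along $N(k)\backslash N(\A_k)$ vanishes. Because $N$ is normal, $ng = g\,(g^{-1}ng)$ with $g^{-1}ng$ sweeping out $N(\A_k)$ as $n$ does; hence, after the obvious change of variable, this says exactly that
$$\int_{N(k)\backslash N(\A_k)} f(n\bar g)\,dn = 0$$
for every $f\in V$ and every representative $\bar g$ of a class in $(N\backslash G)(\A_k)$, i.e. the inner integral occurring in the Lemma is identically zero.

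The second step is to substitute this into the identity of the Lemma,
$$\int_{G(k)\backslash G(\A_k)} f(g)\,dg = \int_{(N\backslash G)(k)\backslash (N\backslash G)(\A_k)}\left[\,\int_{N(k)\backslash N(\A_k)} f(n\bar g)\,dn\,\right]d\bar g,$$
whose hypotheses are in force here ($f$ is $L^1$, and the groups we care about satisfy the condition $(N\backslash G)(\mathfrak a)=N(\mathfrak a)\backslash G(\mathfrak a)$ for all algebras $\mathfrak a\supset k$). The bracket being identically zero forces the left-hand side to vanish for every $f\in V$, contradicting the assumption that the $G$-period is not identically zero on $V$. This proves the corollary.

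The argument is purely formal; the only points calling for a line of care are the compatible normalization of right Haar measures so that the Lemma applies and so that right translation by $G(\A_k)$ preserves the $L^1$ condition, and the appeal to normality of $N$ to identify $\int_{N(k)\backslash N(\A_k)} f(n\bar g)\,dn$ with the $N$-period of a $G(\A_k)$-translate of $f$. I do not expect any genuine obstacle beyond this bookkeeping.
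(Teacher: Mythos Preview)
Your proposal is correct and is exactly the argument the paper intends: the Corollary is stated immediately after the Lemma with no separate proof, as a direct consequence of the Fubini identity together with the $G(\A_k)$-invariance of $V$. One small remark: the passage where you invoke normality and the change of variable $n\mapsto g^{-1}ng$ is unnecessary --- the $N$-period of the right translate $x\mapsto f(xg)$ is already $\int_{N(k)\backslash N(\A_k)} f(ng)\,dn$, which is literally the inner integral in the Lemma, so no change of variables is needed (and indeed conjugation by $g\in G(\A_k)$ need not preserve $N(k)$, so that step would require more care if it were actually used).
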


The following proposition is the  global analogue of Proposition \ref{prop3} of the last section, with a proof which 
is almost verbatim the proof there.  The notation used in this proposition is accordingly the same as there, in particular
we remind the reader of the character $\psi_n$ introduced before Proposition \ref{prop3}.

\begin{proposition} \label{prop7} Let $\Pi$ be a space of bounded smooth functions on $P_n^1(k)\backslash P_n^1(\A_k)$ 
which is invariant under $P_n^1(\A_k)$ where $P_n^1$
is  the Klingen mirabolic subgroup  of  $\U(W_n \otimes K)$  
consisting of cuspforms (for `standard' parabolics contained in $\U(W_n\otimes K)$: 
notice that even if a standard parabolic 
is not contained in $P_n^1(\A_k)$, its unipotent radical is). Assume that the period integral of $\Pi$ on 
 the Klingen mirabolic subgroup $Q^1_{n}$ of the symplectic subgroup $\Sp(W_n)$ is not identically zero. 
Then for the
unipotent radical $N_n(G)$ of $P^1_n$,  and the automorphic character $\psi_n:N_n(G)(k)\backslash N_n(G)(\A_k) 
\rightarrow \C^\times$, the image of  $\Pi$ under the Fourier coefficient map ${\mathcal F}$ introduced  above,
is a nonzero representation of $P^1_{n-1}(\A_k)$ for $P^1_{n-1}$ the Klingen mirabolic subgroup of  $\U(W_{n-1} \otimes K)$ 
consisting of bounded cuspforms for which the period integral  on 
 the Klingen mirabolic subgroup $Q^1_{n}$ of the symplectic subgroup $\Sp(W_n)$ is not identically zero. 
\end{proposition}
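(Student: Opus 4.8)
The plan is to mimic the local proof of Proposition \ref{prop3}, replacing twisted Jacquet modules by Fourier coefficients and Clifford theory by the global analogue established in Proposition \ref{basic} and Corollary \ref{cor4}. First I would set $A = N_n(G)/N_n(S)$, a vector space over $k$ isomorphic to $W_{n-1}$, on which $H = \Sp(W_{n-1})$ acts in the natural way, so that $\Sp(W_{n-1}) \ltimes N_n(G)/N_n(S) = A \rtimes H$ is exactly the kind of semidirect product to which Proposition \ref{basic} applies (the quotients $H_\psi(k)\backslash H_\psi(\A_k)$ have finite measure since the stabilizers are the trivial group and the Klingen mirabolic $Q^1_{n-1}$, both of which have this property). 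The input is that the period integral of $\Pi$ over $Q^1_n(k)\backslash Q^1_n(\A_k)$ is not identically zero; since $N_n(S)$ is the unipotent radical of $Q^1_n$ and is normal in $Q^1_n$ with $Q^1_n/N_n(S) \cong \Sp(W_{n-1})$ (as algebraic groups, with the quotient of adele points being the adele points of the quotient), Corollary \ref{cor4} — applied with $G = Q^1_n$, $N = N_n(S)$ — shows that the constant term of $\Pi$ along $N_n(S)$, call it $\Pi_{N_n(S)}$, has nonzero period integral over $\Sp(W_{n-1})(k)\backslash \Sp(W_{n-1})(\A_k)$. Note $\Pi_{N_n(S)}$ is a space of bounded smooth functions on $(\Sp(W_{n-1}) \ltimes A)(k)\backslash (\Sp(W_{n-1}) \ltimes A)(\A_k)$, invariant under that group.

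Next I would apply Proposition \ref{basic} to $G = A \rtimes H = \Sp(W_{n-1}) \ltimes A$ and the space $\Pi_{N_n(S)}$: this produces a function $f$ in the space and a linear character $\psi$ of $A(\A_k)/A(k)$ — i.e. a character of $N_n(G)/N_n(S)$, equivalently a character of $N_n(G)$ trivial on $N_n(S)$ — whose Fourier coefficient $f_\psi$ is a function on $H_\psi(k)\backslash H_\psi(\A_k)$ with nonzero period integral. There are two $\Sp(W_{n-1})(k)$-orbits of such characters: the trivial one, with stabilizer all of $\Sp(W_{n-1})$, and the orbit through $\psi_n$ (the character introduced before Proposition \ref{prop3}), with stabilizer $Q^1_{n-1}$. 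In the trivial-character case the Fourier coefficient is just the further constant term $\Pi_{N_n(G)}$, which then has nonzero $\Sp(W_{n-1})$-period; but $\Pi_{N_n(G)}$ is a space of cuspforms on $\U(W_{n-1}\otimes K)$ (the constant term of a cuspform along the unipotent radical of a standard parabolic vanishes, so in this inductive bootstrap this branch should not actually occur — or more carefully, one handles it by the same argument that closes the local proof, noting a nonzero $\Sp(W_{n-1})$-period a fortiori gives a nonzero $Q^1_{n-1}$-period). In the nontrivial case, $f_{\psi_n}$ is a function on $Q^1_{n-1}(k)\backslash Q^1_{n-1}(\A_k)$, and since the Fourier coefficient map along $N_n(G)$ with respect to $\psi_n$ commutes with the action of $P^1_{n-1}(\A_k)$ (the stabilizer of $\psi_n$ inside $\U(W_{n-1}\otimes K)$, whose adele points act on the $\psi_n$-Fourier coefficients), its image is a $P^1_{n-1}(\A_k)$-invariant space of bounded smooth functions on $P^1_{n-1}(k)\backslash P^1_{n-1}(\A_k)$, consisting of cuspforms, with nonzero $Q^1_{n-1}$-period. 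This is exactly the claimed conclusion for $n-1$.

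The one genuinely non-formal point, which I would flag as the main obstacle, is making precise that the Fourier coefficient map ${\mathcal F}$ lands in bounded smooth \emph{cuspforms} for $\U(W_{n-1}\otimes K)$ and is $P^1_{n-1}(\A_k)$-equivariant: boundedness and $H_{\psi_n}(\A_k)$-equivariance are built into the discussion preceding Proposition \ref{basic}, but one must check that integrating a cuspform on $P^1_n$ against $\psi_n$ over $N_n(G)(\A_k)/N_n(G)(k)$ again produces a cuspform — i.e. that taking a further constant term along the unipotent radical of a standard parabolic of $\U(W_{n-1}\otimes K)$ still vanishes after the $\psi_n$-twist. This is because such a unipotent radical commutes with $N_n(G)$ up to terms inside $N_n(G)$ on which $\psi_n$ is suitably behaved, so the two integrations can be interchanged (using the parenthetical remark in the statement that even standard parabolics not contained in $P^1_n$ have their unipotent radicals inside it), reducing to the cuspidality hypothesis on $\Pi$. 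With that in hand the proof is, as asserted, verbatim the local argument, with Corollary \ref{cor4} playing the role of passing to the Jacquet module along $N_n(S)$ and Proposition \ref{basic} playing the role of Clifford theory plus Frobenius reciprocity.
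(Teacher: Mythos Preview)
Your proposal is correct and follows essentially the same route as the paper's own proof: pass to the constant term along $N_n(S)$ using the Fubini-type lemma (you cite Corollary~\ref{cor4}, but what you actually use is the lemma immediately preceding it, which identifies the $Q^1_n$-period with the $\Sp(W_{n-1})$-period of the $N_n(S)$-constant term), then apply Proposition~\ref{basic} to the semidirect product $\Sp(W_{n-1})\ltimes W_{n-1}$, and sort the two orbits. Your treatment is in fact slightly more explicit than the paper's on two points: you spell out why the trivial-orbit branch is empty (the $N_n(G)$-constant term vanishes by the cuspidality hypothesis on $\Pi$), and you flag and sketch the verification that $\mathcal{F}_{\psi_n}(\Pi)$ again consists of cuspforms, which the paper simply declares ``easy'' and leaves to the reader.
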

\begin{proof} Let $N_n(S)$ be the unipotent radical of $Q^1_n$, and $N_n(G)$ the unipotent radical of 
$P^1_n$. Since $N_n(S)$ is a normal subgroup of $Q^1_n$, and we are given that $\Pi$ has nonzero period
integral on $Q_n^1(k)\backslash Q_n^1(\A_k)$. Since $N_n(S) \subset Q^1_n$ is a normal subgroup, it follows from Corollary \ref{cor4} that the period integral on $N_n(S)(k)\backslash N_n(S)(\A_k)$ 
is also nonzero on $\Pi$. We consider the  trivial Fourier coefficient of $\Pi$ with respect to $N_n(S)(k)\backslash N_n(S)(\A_k)$,
to construct a space of functions --- call it $\Pi_{N_n(S)}$ --- on $\Sp_{2n-2}(\A_k)\ltimes N_n(G)/N_n(S)(\A_k) = \Sp_{2n-2}(\A_k)\ltimes \A_k^{2n-2}$. 
We now apply Proposition  \ref{basic} with $G = H \ltimes A = \Sp_{2n-2}\ltimes k^{2n-2}$  and $ \pi = \Pi_{N_n(S)}$. Note that there are
two orbits for the action of $\Sp_{2n-2}(k)$ on $k^{2n-2}$, hence also on the character group of $(\A_k/k)^{2n-2}$: 
the zero orbit and the orbit passing through any nontrivial character of $(\A_k/k)^{2n-2}$ such as $\psi_n$.
Observing that 
the character $\psi_n$ is trivial on $N_n(S)(\A_k)$, therefore it defines a character of 
$A(\A_k)= N_n(S)(\A_k) \backslash N_n(G)(\A_k)$, and the corresponding Fourier coefficient on $G$ is the same 
as that on $P^1_n$ because of:
$$\int_{  N_n(G)(k)\backslash N_n(G)(\A_k) }f(n) \psi_n(n) dn = \int_{ A(k)\backslash A(\A_k) }
\left [ \int_{N_n(S)(k)\backslash N_n(S)(\A_k)} f(nn') dn \right ] \psi_n(n')dn'.$$ 

The image of  $\Pi$ under the Fourier coefficient map ${\mathcal F}$ introduced  above consists of cuspforms is an easy result which we leave to the reader; boundedness of functions in ${\mathcal F}(\Pi)$ is clear. 
\end{proof}

\begin{proposition}\label{prop8}
Let $\Pi$ be a cuspidal automorphic representation of $U(W_n \otimes K)$, and let 
$Q^1_n$ be the Klingen mirabolic subgroup in $\Sp(W_n)$. Then if $\int_{Q_n^1(k) \backslash Q_n^1(\A_k)}f(g)dg$ 
vanishes for all $f \in \Pi$, 
$\int_{\Sp_{2n}(k)\backslash \Sp_{2n}(\A_k)}f(g)dg$ vanishes too for all $f \in \Pi$.
\end{proposition}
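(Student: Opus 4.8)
The plan is to upgrade the vanishing of the Klingen mirabolic period to vanishing of the full symplectic period by an Eisenstein series argument, following the trick attributed to [AGR]. First I would observe that the Klingen parabolic $Q$ of $\Sp(W_n)$ has the form $Q = Q^1 \cdot \mathbb{G}_m$ (a one-parameter group of similitudes on the isotropic line), and more precisely that $\Sp(W_n) \supset Q \supset Q^1_n$, with $Q^1_n$ the kernel of a character on the Levi torus factor of $Q$; the point is that $Q^1_n(k)\backslash Q^1_n(\A_k)$ and $Q(k)\backslash Q(\A_k)$ differ by an integration over $\Gm(k)\backslash \Gm(\A_k)$, which is not of finite volume, so one cannot directly use Fubini. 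Instead, I would form, for a cusp form $f \in \Pi$ and a section $\Phi_s$ in the degenerate principal series of $\Sp(W_n)$ induced from the Klingen parabolic $Q$ (with the character $|\cdot|^s$ on the $\Gm$-factor), the Eisenstein series $E(g,\Phi_s) = \sum_{\gamma \in Q(k)\backslash \Sp_{2n}(k)} \Phi_s(\gamma g)$, and consider the integral
\[
I(s) = \int_{\Sp_{2n}(k)\backslash \Sp_{2n}(\A_k)} f(g)\, E(g,\Phi_s)\, dg.
\]

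Next I would unfold: by the standard Rankin–Selberg manipulation, $I(s)$ equals $\int_{Q(k)\backslash \Sp_{2n}(\A_k)} f(g)\Phi_s(g)\,dg$, and writing $Q = Q^1_n \rtimes \Gm$ this becomes $\int_{\Gm(k)\backslash \Gm(\A_k)} |t|^s \left[\int_{Q^1_n(k)\backslash Q^1_n(\A_k)} f(tg)\,dg'\right] dt \cdot (\text{extra integration over }\Gm(\A_k)\backslash \Sp_{2n}(\A_k)\text{ built into }\Phi_s)$ — more carefully, since $f$ is a cusp form and the constant term of $f$ along the unipotent radical of $Q$ vanishes, the unfolded integral is genuinely convergent and the inner integral is precisely the Klingen mirabolic period of the translate $g \mapsto f(tg)$. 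By the hypothesis of the proposition (vanishing of $\int_{Q_n^1(k)\backslash Q_n^1(\A_k)} f(g)\,dg$ for all $f \in \Pi$, and hence for all right translates of $f$, since $\Pi$ is $\Sp_{2n}(\A_k)$-stable — here I invoke that $\Pi$ is an automorphic representation so closed under right translation), this inner integral is identically zero. Hence $I(s) \equiv 0$ as a function of $s$, at least in the range of absolute convergence of the Eisenstein series, and then by meromorphic continuation it vanishes identically.

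Finally I would extract the symplectic period by taking a residue. The degenerate Eisenstein series $E(g,\Phi_s)$ attached to the Klingen parabolic of $\Sp_{2n}$ has a pole (for suitable choice of section) at the point $s = s_0$ where its residue is a constant function on $\Sp_{2n}(k)\backslash \Sp_{2n}(\A_k)$ — this is the standard fact that the "trivial" or spherical value of such a series, or an appropriate residue, is constant, analogous to the residue of $E(g,s)$ for $\mathrm{SL}_2$ at $s=1$ being constant. Taking the residue of $I(s) \equiv 0$ at $s = s_0$, and using that $\mathrm{Res}_{s=s_0} E(g,\Phi_s) = c$ for a nonzero constant $c$ with an appropriate choice of $\Phi$, yields $c \int_{\Sp_{2n}(k)\backslash \Sp_{2n}(\A_k)} f(g)\,dg = 0$, which is the desired conclusion. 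The main obstacle I anticipate is the bookkeeping around the Eisenstein series: verifying convergence of the unfolded integral using cuspidality of $f$ (so that the sum over $Q(k)\backslash \Sp_{2n}(k)$ can be unfolded against $f$ with no boundary issues), checking that the degenerate principal series from the Klingen parabolic does admit a section whose Eisenstein series has the required constant residue (rather than, say, only having its pole contribute a more complicated residual automorphic form), and ensuring the residue does not vanish for a good choice of data. Once those analytic points are in place, the argument is formal; I would also remark, as the authors do, that in the function field case cusp forms are compactly supported so the whole discussion simplifies.
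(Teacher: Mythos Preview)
Your proposal is correct and follows essentially the same Eisenstein series argument as the paper: form the degenerate Eisenstein series on $\Sp_{2n}$ attached to the Klingen parabolic, unfold the integral $\int f\cdot E$ against $f\in\Pi$ to an integral over $Q_n(k)\backslash \Sp_{2n}(\A_k)$, use the Iwasawa decomposition and $Q_n=\Gm\times Q^1_n$ to expose the Klingen mirabolic period of right translates of $f$ (which vanish by hypothesis since $\Pi$ is $\U_{2n}(\A_k)$-stable and $\Sp_{2n}\subset \U_{2n}$), and then take the residue at the point where $E$ has constant residue. The paper carries this out with the standard spherical section $\phi_0$, for which the residue at $s=1$ is the constant function $1$, and frames the argument as a proof by contradiction, but the substance is identical to what you outline; your anticipated analytic obstacles (convergence of the unfolded integral via rapid decay of cusp forms, existence of a section with constant residue) are exactly the points the paper addresses.
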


\begin{proof}
Assuming that $\int_{\Sp_{2n}(k)\backslash \Sp_{2n}(\A_k)}f(g)dg \not = 0$ for some  $f \in \Pi$, we shall prove by contradiction that
$\int_{Q_n^1(k) \backslash Q_n^1(\A_k)}f(g)dg \not = 0$ also for some $f \in \Pi$. Assume if possible that 
$\int_{Q_n^1(k) \backslash Q_n^1(\A_k)}f(g)dg$ 
vanishes for all $f \in \Pi$.

Let $I(s) = {\rm Ind}_{Q_n(\A_k)}^{\Sp_{2n}(\A_k)}(\delta^s)$ be the principal series representation of  $\Sp_{2n}(\A_k)$ 
for  $\delta$ `half the sum of positive roots' for the Klingen parabolic  $Q_n(\A_k)$. If we write the natural decomposition of $Q_n$ as 
$Q_n = {\mathbb G}_m \times Q_n^1$, then for $(t,q) \in Q_n(\A_k) = \A_k^\times  \times Q_n^1(\A_k)$,  
$\delta(t,q) =  |t|^{n}$ where $|t|$ is the usual absolute value on $ \A_k^\times$.

Let $\phi(g,s) \in I(s)$ be  a `standard' section 
of this analytic family of  principal series representations, thus for each $s \in \C$,
$\phi(g,s)$ are functions on $\Sp_{2n}(\A_k)$ such that,
$$\phi(pg,s) = |p|^s\phi(g,s),$$
where $p = (t,q) \in Q_n(\A_k) = \A_k^\times \times Q_n^1(\A_k)$, and $|p|^s = |t|^{ns}$. Let 
$K_\A$ be the maximal compact subgroup of $\Sp_{2n}(\A_k)$ given by 
$K_\A = 
\prod_{v< \infty} \Sp_{2n}({\mathcal O}_v) \times K_\infty$ 
so that 
$ \Sp_{2n}(\A_k) = K_\A \times Q_n(\A)$. 
To say that a family of functions 
$\phi(g,s) \in I(s)$ is  a `standard' section means that 
its restriction to $K_\A$ is a smooth function independent of $s$. By the transformation property,
$\phi(pg,s) = |p|^s\phi(g,s),$ the restriction of $\phi(g,s)$ to $K_\A$ has the property that
$$\phi(pg,s) = \phi(g,s),$$
for all $p \in Q_n(K_\A)= K_\A \cap Q_n(\A)$. 
Conversely, given a smooth function $\phi$ on $K_\A$
with the property $\phi(pg) = \phi(g)$ for all $p \in Q_n(K_\A)= K_\A \cap Q_n(\A)$, there is a unique 
standard section $\phi(g,s) \in I(s)$. 
In particular, there is the unique section $\phi_0(g,s)$ which is identically
1 on $K_\A$, which may be called the standard spherical section of the family $I(s)$.

Now, for a standard section $\phi(g,s) \in I(s)$, consider the Eisenstein series $$E(\phi, g,s)= \sum_{\gamma \in Q_n(k)\backslash \Sp_{2n}(k)} \phi(\gamma g,s),$$ 
a meromorphic family of functions 
on $ \Sp_{2n}(k)\backslash \Sp_{2n}(\A_k)$ which is known to be absolutely convergent for $Re(s)>1$, 
which for the function $\phi(g,s)=\phi_0(g,s)$
has a simple pole at $s=1$,
and $\mbox{Res}_{s= 1}E(\phi_0, g,s) $ is the constant function 1 on 
$\Sp_{2n}(k)\backslash \Sp_{2n}(\A_k)$.  In what follows, we shall denote by $E(g,s)$, the Eisenstein series $E(\phi_0, g,s)$.

As is well known, a cuspform is rapidly decreasing, and an Eisenstein series is slowly increasing. It follows that 
the product of a cusp form (on a group $G$ restricted to a subgroup $H$) with an Eisenstein series on $H$ is 
still rapidly decreasing, and therefore for  $f$ any function in $\Pi$ restricted to  $\Sp_{2n}(k)\backslash \Sp_{2n}(\A_k)$, 
it is meaningful to integrate $f \cdot E(g,s)$ on ${\Sp_{2n}(k)\backslash \Sp_{2n}(\A_k)}$, and unfold the Eisenstein series:

\begin{eqnarray*}
\int_{\Sp_{2n}(k)\backslash \Sp_{2n}(\A_k)} f(g)E(g,s)dg & = & 
 \int_{\Sp_{2n}(k)\backslash \Sp_{2n}(\A_k)}f(g) \sum_{\gamma \in Q_n(k)\backslash \Sp_{2n}(k)} \phi_0(\gamma g,s)dg \\ 
& \stackrel {(*)}= & \int_{Q_n(k)\backslash \Sp_{2n}(\A_k)}f(g)\phi_0(g,s)dg.
\end{eqnarray*}

Using the decomposition, 
$ \Sp_{2n}(\A_k) = K_\A \times Q_n(\A) = K_\A \times \A_k^\times \times Q_n^1(\A)$, we write the Haar measure on $\Sp_{2n}(\A_k)$ as 
$dg = dk dq = dk d^\times \!a dq' $, so that for any $L^1$ 
function $\lambda$ on $\Sp_{2n}(\A_k)$, we have the following form of Fubini's theorem:
\begin{eqnarray*}
\int_{\Sp_{2n}(\A_k)} \lambda(g) dg & = & \int_{Q_n(K_\A) \backslash K_\A} \int_{Q_n(\A_k)} \lambda(k,q)dq dk \\
& = & \int_{Q_n(K_\A) \backslash K_\A} 
\int_{\A_k^\times} \left[ \int_{Q^1_n(\A_k)} 
\lambda(k,a,q')dq' \right ] d^\times\!a dk. 
\end{eqnarray*}
Since $\phi_0(g,s)\equiv 1$ on $K_\A$, it follows from equation $(*)$ that 
\begin{eqnarray*}
\int_{\Sp_{2n}(k)\backslash \Sp_{2n}(\A_k)}f(g)E(g,s)dg  & = & 
\int_{Q_n(K_\A) \backslash K_\A} \left [\int_{\A_k^\times /k^\times} \int_{Q^1_n(k)\backslash Q^1_n(\A_k)}|a|^s f(q' a k) dq' d^\times\!a  \right ]dk \\ 
& = & \int_{Q_n(K_\A) \backslash K_\A} \left [\int_{\A_k^\times /k^\times} |a|^s F( a, k) d^\times\!a  \right ]dk, 
\end{eqnarray*}
where $$F(a,k)=\int_{Q^1_n(k)\backslash Q^1_n(\A_k)}
 f(q' a k) dq', $$
is the integral of a bounded function on a space with finite measure, so the integral is absolutely convergent. Further,
$F(a,k)$ as a function of $a \in {\A_k^\times /k^\times}$ is, by the known property of a cusp form, rapidly decreasing 
at $\infty$ of ${\A_k^\times /k^\times}$, i.e., when $|a|$ tends to infinity.   Therefore, $\int_{\A_k^\times /k^\times} |a|^s F( a, k) d^\times\!a$ is a convergent integral for $Re(s)$ large enough. 

Observe that if the period integral of every function in $\Pi$ on ${Q^1_n(k)\backslash Q^1_n(\A_k)}$ is zero, then the
function $F(a,k)$ will be identically zero, and hence the period integral 
 $\int_{\Sp_{2n}(k)\backslash \Sp_{2n}(\A_k)}f(g)E(g,s)dg$ will be zero at 
least for $Re(s)$ large, and therefore identically 0 as an analytic function.

On the other hand, as mentioned at the end of proof of Proposition 1 in [AGR] as a well-known fact, 
$\mbox{Res}_{s= 1}E(g,s) $ is the constant function 1 on 
$\Sp_{2n}(k)\backslash \Sp_{2n}(\A_k)$, we have 

\begin{eqnarray*} 
\mbox{Res}_{s= 1} \left (\int_{\Sp_{2n}(k)\backslash \Sp_{2n}(\A_k)}f(g)E(g,s)dg \right ) & = & 
\int_{\Sp_{2n}(k)\backslash \Sp_{2n}(\A_k)}f(g)dg,
\end{eqnarray*}
a nonzero number by  our initial assumption that $\int_{\Sp_{2n}(k)\backslash \Sp_{2n}(\A_k)}f(g)dg \not = 0$ for some  $f \in \Pi$, 
 proving that the period integral of some function in $\Pi$ on ${Q^1_n(k)\backslash Q^1_n(\A_k)}$ must be  nonzero.
\end{proof}

\begin{theorem}
Let $\Pi$ be a cuspidal automorphic representation of  $\U(W_n\otimes K)$.
Then the period integral of functions in $\Pi$ on  the Klingen mirabolic subgroup $Q^1_{n}$ 
of the symplectic subgroup $\Sp(W_n)$, as well as on 
the symplectic subgroup $\Sp(W_n)$ is identically zero. 
\end{theorem}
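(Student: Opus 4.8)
The plan is to deduce the theorem by combining the two preceding propositions — the global Clifford-theoretic descent, Proposition~\ref{prop7}, and the Eisenstein-series comparison, Proposition~\ref{prop8} — and iterating the descent all the way down the tower $W_n \supset W_{n-1} \supset \cdots \supset W_1$. First I would reduce to the Klingen-mirabolic statement: it is enough to show that $\int_{Q^1_n(k)\backslash Q^1_n(\A_k)} f(g)\,dg = 0$ for every $f \in \Pi$, since Proposition~\ref{prop8} then forces the period of $\Pi$ along $\Sp(W_n)$ to vanish as well. So suppose, for contradiction, that the $Q^1_n$-period is not identically zero on $\Pi$. Restricting the functions of $\Pi$ to $P^1_n(\A_k)$ yields a space of bounded (cusp forms being rapidly decreasing) smooth functions on $P^1_n(k)\backslash P^1_n(\A_k)$, invariant under $P^1_n(\A_k)$, consisting of cusp forms for the standard parabolic subgroups of $\U(W_n\otimes K)$ — whose unipotent radicals all lie inside $P^1_n$ — and with nonzero $Q^1_n$-period; these are exactly the hypotheses of Proposition~\ref{prop7}.

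Next I would iterate Proposition~\ref{prop7}: applied at level $j$ it returns a nonzero space of bounded cusp forms on $P^1_{j-1}(k)\backslash P^1_{j-1}(\A_k)$, invariant under $P^1_{j-1}(\A_k)$, again with nonzero $Q^1_{j-1}$-period, i.e.\ a space satisfying the same hypotheses one level down. Applying this successively at $j = n, n-1, \dots, 2$ produces such a space $\Phi$ at level $1$ (if $n=1$ there is nothing to iterate: the restriction of $\Pi$ to $P^1_1$ is already of this form). It is worth recalling why only the nontrivial branch of the local Proposition~\ref{prop3} occurs in the global setting: the trivial Fourier coefficient along $N_j(G)$ is the constant term along the unipotent radical of the Klingen parabolic of $\U(W_j\otimes K)$, which vanishes by cuspidality — this is already built into the statement of Proposition~\ref{prop7}. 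Checking that boundedness, cuspidality, and $P^1_j(\A_k)$-invariance are preserved at each stage — essentially because each Fourier coefficient is taken along a unipotent subgroup and preserves cusp-form-ness — is the only mildly technical point, and I expect it to be the main (though not deep) obstacle, since the substantive content is already contained in Propositions~\ref{prop7} and~\ref{prop8}.

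Finally, at level $1$ the group $P^1_1 = Q^1_1$ is the one-dimensional unipotent group $N_1(G)$, namely the unipotent radical of the Borel subgroup of $\U(W_1\otimes K) = \U(1,1)$. On the one hand, cuspidality of $\Phi$ gives $\int_{N_1(G)(k)\backslash N_1(G)(\A_k)}\varphi(n)\,dn = 0$ for all $\varphi \in \Phi$; on the other hand, nonvanishing of the $Q^1_1$-period says exactly $\int_{N_1(G)(k)\backslash N_1(G)(\A_k)}\varphi(n)\,dn \neq 0$ for some $\varphi \in \Phi$ — a contradiction. This terminal step is the global counterpart of the local observation, used just after Corollary~\ref{corollary2}, that the character $\mu_n$ omits the middle simple-root coordinate $x_n$ and is hence trivial on the unipotent radical of the Siegel parabolic, forcing a nonzero Siegel Jacquet module and contradicting cuspidality there. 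Thus the $Q^1_n$-period, and with it the $\Sp(W_n)$-period, of $\Pi$ is identically zero.
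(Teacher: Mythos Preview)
Your proposal is correct and follows essentially the same route as the paper: iterate Proposition~\ref{prop7} down the tower until at level $1$ the Klingen mirabolic coincides with the unipotent radical of the Borel, where nonvanishing of the period contradicts cuspidality, and then invoke Proposition~\ref{prop8} to pass from the $Q^1_n$-period to the full $\Sp(W_n)$-period. The only cosmetic difference is that you state the reduction via Proposition~\ref{prop8} at the outset rather than at the end; your explicit remark that the trivial-character branch of the descent is killed by cuspidality is a point the paper leaves implicit in its formulation of Proposition~\ref{prop7}.
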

\begin{proof} We first apply Proposition \ref{prop7} to conclude that 
the period integral of functions in $\Pi$ on  the Klingen mirabolic subgroup $Q^1_{n}$ 
of the symplectic subgroup $\Sp(W_n)$ must be identically zero. 

Note that the `boundedness' hypothesis on functions in $\Pi$ in Proposition \ref {prop7} is a well-known consequence of cuspidality. 
The assertion on the period integral of functions in $\Pi$ on  the Klingen mirabolic subgroup $Q^1_{n}$ 
 is a direct consequence of the Proposition \ref{prop7} by an inductive argument 
on noting that for both $\U(1,1)$ and $\Sp(2) = \SL(2)$, the Klingen mirabolic subgroup is the group of upper triangular unipotent matrices, and therefore distinction 
by unipotent group and cuspidality are contradictory to each other. Thus  the period integral of functions in $\Pi$ on 
 the Klingen mirabolic subgroup $Q^1_{n}$ of the symplectic subgroup $\Sp(W_n)$ is identically zero. 

Now, the theorem follows from Proposition \ref{prop8}.
\end{proof}

\begin{remark} The idea of using Eisenstein series in Proposition \ref{prop8} comes from  [AGR], 
specially  their Proposition 2 on page 719. Multiplication  by an Eisenstein series can be interpreted as  the global analogue of the 
identity in the context of finite or locally compact totally disconnected  groups: $\pi \otimes {\rm ind}_H^G 1 
\cong {\rm ind}_H^G(\pi|_H)$ for $\pi$ a 
smooth representation of $G$. Thus $\pi$ carries an $H$-invariant linear form 
if and only if $\pi \otimes {\rm ind}_H^G 1$ 
carries a $G$-invariant linear form; multiplication by Eisenstein series and integration on $G(k)\backslash G(\A_k)$ allows an `unfolding', and 
the proof is achieved along a standard path.
\end{remark}

 \section{Isogenies among classical groups}

The rest of the paper uses theta correspondence to classify irreducible admissible representations of $\U_4(F)$ which are distinguished by $\Sp_4(F)$ both locally and globally. To be able to use methods of theta correspondence, we will find it convenient to turn the pair $(\U_4(F), \Sp_4(F))$ into the closely related 
pair which is $(\SO(4,2), \SO(3,2))$, which we elaborate here for the benefit of some of the readers.
Here $\SO(4,2)$ is a special orthogonal group which is not split, but quasi-split and split over the quadratic extension $E/F$ used to define the 
unitary group $\U(2,2)$, which is also assumed to be quasi-split; the group $\SO(3,2)$ is a split orthogonal group in 5 variables.
\\

\subsection{The isogeny $\Sp(4) \rightarrow \SO(2,3)$}~\\

Let $W$ be a 4 dimensional symplectic space with basis $\{e_1,e_2,e_3,e_4\}$ endowed with the
 symplectic form
$$A = \left ( \begin{array}{ccccc} 
  && & &  1 \\
& & &   1 & \\
&  -1 &   & & \\
-1 & &  &  & 
\end{array} \right ).
$$
The symplectic group $\Sp(W)$ defined using this symplectic form is also the subgroup of $\GL(W)$  fixing the vector 
$w_0 = e_1\wedge e_4 + e_2\wedge e_3$ in $\bigwedge^2W$.

Consider the bilinear form 
$B: \bigwedge^2W \times \bigwedge^2W \longrightarrow \bigwedge^4W \cong F$ given by:
$$(w_1\wedge w_2 , w_3\wedge w_4) \longrightarrow w_1 \wedge w_2 \wedge w_3 \wedge w_4.$$ 
It is easily seen that $B$ is  a non-degenerate symmetric bilinear form on $\bigwedge^2W$ 
 on which $g \in \GL(W)$ operates by scaling by $\det g$, i.e., $g B = (\det g)B$, in particular, $\SL(W)$ preserves the bilinear form, giving rise to a homomorphism from $\SL_4(F)$ to the corresponding orthogonal group in 6 variables which is $\SO(3,3)$.

Further, $$B(w_0,w_0)= 2e_1\wedge e_2 \wedge e_3 \wedge e_4 \not = 0,$$
hence the orthogonal complement $ \langle w_0 \rangle ^{\bot} \subset \bigwedge^2W$ is a non-degenerate quadratic subspace of $\bigwedge^2W$ of dimension 5 preserved by $\Sp(W)$.

This gives rise to an isogeny of algebraic groups $\Sp(4) \rightarrow \SO(2,3)$, making  the following commutative diagram:

$$
\begin{CD}
\Sp(4)
@>  >>  \SL(4)\\
 @ VVV  @VVV  \\
\SO(2,3)
@> >>  \SO(3,3).
\end{CD}
$$ 

\vspace{1cm}

\subsection{The isogeny $\SU(2,2) \rightarrow \SO(4,2)$}~\\

In this section we construct an isogeny from $\SU(2,2)$ to  $\SO(4,2)$, which although is known to exist by generalities (because both groups are quasi-split over $F$ and split by $E$, and the first group is simply connected), we 
have preferred to give an explicit construction in some detail not having found one in the literature (there are some constructions over $\R$). In fact, we were surprised to find that the existence of the isogeny is not there for all hermitian forms (in 4 variables), but only those with discriminant 1, see the remark at the end of the section.

Let $E$ be a quadratic field extension of a field $F$, with $e\rightarrow \bar{e}$ the non-trivial Galois automorphism of $E$ over $ F$. Let $V$ be a vector space over $E$ equipped with a hermitian form $H: V\times V \rightarrow E$
such that:

\begin{enumerate}
\item  $H(v_1d_1, v_2d_2) = \bar{d}_1H(v_1,v_2)d_2$ for all $v_1,v_2 \in V, d_1, d_2 \in E$.

\item $\overline{H(v_1, v_2)} = H(v_2,v_1)$.
\end{enumerate}

\vspace{2mm}
Define $\U(V, H)$ to be the corresponding unitary group which is the isometry group of the pair $(V,H)$, 
and $\SU(V,H)$ to be  the subgroup of determinant one $E$-automorphisms. It will be convenient for us 
to think of $H$ as a $n \times n$ hermitian matrix over $E$ where $n = \dim V$, which we will actually take to be a symmetric matrix over $F$, and define $\U(V,H)$ by:
$$\U(V,H)= \{ g \in \GL(V)| gH\,{}^t\!\bar{g} = H.\}$$

Note that $\GL_4(E)$ operates on the space of $4 \times 4$ skew-symmetric matrices over $E$ by $g\circ X = gX\,{}^t\!g$
which carries a quadratic form, the Pfaffian, given on
$$X = \left ( \begin{array}{cccc} 
 0 & X_{12}& X_{13}&  X_{14} \\
-X_{12}& 0  & X_{23}& X_{24}    \\
-X_{13}&  -X_{23} & 0  & X_{34} \\
-X_{14} & -X_{24}& -X_{34} &  0 
\end{array} \right ),
$$
by (cf. E.Artin's, `Geometric Algebra', page 142)$${\rm Pf}(X) = X_{12}X_{34} + X_{13}X_{42} + X_{14}X_{23} = X_{12}X_{34} - X_{13}X_{24} + X_{14}X_{23}.$$
One knows that ${\rm Pf}(g\circ X) = \det (g) {\rm Pf}(X)$, therefore this gives an explicit homomorphism of $\SL_4(E)$ into
$\SO(3,3)(E)$. In the rest of this section, we will construct a 6 dimensional $F$-subspace of the space of $4 \times 4$ 
skew-symmetric matrices which is left stable by $\SU(V,H)$, and on which Pfaffian takes values in $F$ giving rise to an
isogeny from $\SU(2,2)$ to $\SO(4,2)$.

\begin{lemma} There exists an automorphism $\phi$ of order 2 (well-defined up to $\pm 1$) 
on the space of $4 \times 4$ skew-symmetric matrices over a field $F$
such that,
$$\phi(gX\,{}^t\!g) = \det(g) \,{}^t\!g^{-1}\phi(X)g^{-1}, \hspace{5cm} {(*)}$$
for all $g \in \GL_4(F)$ and $X$ any $4 \times 4$ skew-symmetric matrix over $F$; equivalently, for a 4 dimensional vector space $V$ over $F$, we have a natural isomorphism:
$$ \Lambda^2 V \cong \det(V) \otimes \Lambda^2 V^\vee.$$
Further, the automorphism $\phi$ preserves the Pfaffian: ${\rm Pf}(X) = {\rm Pf}(\phi(X)).$
\end{lemma}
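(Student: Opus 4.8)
The plan is to construct $\phi$ explicitly via the wedge/contraction structure on a $4$-dimensional space, since the displayed natural isomorphism $\Lambda^2 V \cong \det(V)\otimes \Lambda^2 V^\vee$ essentially dictates the formula. First I would fix an identification $\Lambda^4 V \cong F$ (a choice of volume form $\omega$), so that the pairing $\Lambda^2 V \times \Lambda^2 V \to \Lambda^4 V \cong F$, $(\alpha,\beta)\mapsto \alpha\wedge\beta$, becomes a nondegenerate symmetric bilinear form $B$ on the $6$-dimensional space $\Lambda^2 V$. This $B$ induces an isomorphism $\Lambda^2 V \xrightarrow{\sim} (\Lambda^2 V)^\vee \cong \Lambda^2 V^\vee$; combined with the scaling behaviour of $\omega$ under $\GL_4$ one gets a map $\Lambda^2 V \to \det(V)\otimes\Lambda^2 V^\vee$ which is $\GL_4$-equivariant on the nose (independent of the choice of $\omega$). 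Concretely, in matrix terms, for a skew-symmetric $X=(X_{ij})$ one sets $\phi(X)$ to be the skew-symmetric matrix whose $(i,j)$ entry is $\pm X_{kl}$ where $\{i,j,k,l\}=\{1,2,3,4\}$ with appropriate sign; explicitly
\[
\phi\!\left(\begin{array}{cccc}
0 & X_{12}& X_{13}&  X_{14} \\
-X_{12}& 0  & X_{23}& X_{24}    \\
-X_{13}&  -X_{23} & 0  & X_{34} \\
-X_{14} & -X_{24}& -X_{34} &  0
\end{array}\right)
=\left(\begin{array}{cccc}
0 & X_{34}& -X_{24}&  X_{23} \\
-X_{34}& 0  & X_{14}& -X_{13}    \\
X_{24}&  -X_{14} & 0  & X_{12} \\
-X_{23} & X_{13}& -X_{12} &  0
\end{array}\right),
\]
with the ambiguity $\phi\leftrightarrow-\phi$ reflecting the choice of sign/orientation.

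Next I would verify the three assertions. That $\phi^2 = \mathrm{id}$ (hence order $2$) is a direct check on the formula above: applying the complementary-index substitution twice returns each entry to itself (the signs square to $+1$). The Pfaffian identity ${\rm Pf}(\phi(X)) = {\rm Pf}(X)$ is also immediate from the explicit formula: ${\rm Pf}(\phi(X)) = \phi(X)_{12}\phi(X)_{34} - \phi(X)_{13}\phi(X)_{24} + \phi(X)_{14}\phi(X)_{23} = X_{34}X_{12} - (-X_{24})(-X_{13}) + X_{23}X_{14} = {\rm Pf}(X)$. The transformation law $(*)$ is the main content, and I would prove it invariantly rather than by brute matrix computation: the map $X\mapsto gX\,{}^t\!g$ is the action of $\GL_4$ on $\Lambda^2 V$, the map $Y\mapsto {}^t\!g^{-1}Yg^{-1}$ is the dual action on $\Lambda^2 V^\vee$, and the scalar $\det(g)$ is exactly the action on $\det(V)$. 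Since $\phi$, as constructed, is the (basis-free) composite $\Lambda^2 V \to \Lambda^2 V^\vee \otimes \det(V)$ coming from the wedge pairing, $\GL_4$-equivariance of that composite is automatic; unwinding it in matrix language gives precisely $(*)$. Equivalently, one checks $(*)$ for $g$ in a generating set — diagonal matrices (where both sides scale entrywise and the $\det(g)$ factor is visible), permutation matrices (which permute indices, matching the complementary-index bookkeeping), and elementary unipotents $I + tE_{ij}$ (a short two-term computation) — and concludes by multiplicativity, noting both sides of $(*)$ are "cocycles" in $g$ in a compatible sense.

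The step I expect to be the genuine obstacle is pinning down the signs consistently: the complementary-index rule only determines $\phi$ up to the global sign already acknowledged in the statement, but one must still make the internal signs ($+X_{34}, -X_{24}, +X_{23}, \ldots$) mutually coherent so that $\phi^2=\mathrm{id}$, ${\rm Pf}\circ\phi = {\rm Pf}$, and $(*)$ all hold simultaneously with the same $\phi$. The clean way to avoid sign chaos is to do everything through the invariant formulation (fix $\omega\in\Lambda^4 V$, define $\phi$ by $\phi(X)\wedge(-) = \omega\otimes(X\mapsto\text{its image under }B)$, i.e. $\phi$ is $B$ followed by the canonical $(\Lambda^2V)^\vee\cong\Lambda^2V^\vee$), where the three properties become formal consequences of nondegeneracy and symmetry of the wedge pairing and of $\det$-multiplicativity of $\Lambda^4$; only at the very end does one transcribe into the matrix formula above to make it explicit for later use. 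I would therefore present the invariant construction as the proof and relegate the matrix formula and the entrywise checks to a verifying remark.
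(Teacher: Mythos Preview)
Your proposal is correct and takes essentially the same approach as the paper: the paper's proof simply observes that $\phi$ is the Hodge-$\star$ operator from $\Lambda^2 V$ to $\Lambda^{4-2} V$ (with respect to the standard quadratic form $X_1^2+X_2^2+X_3^2+X_4^2$) and omits further details, while your construction via the wedge pairing $\Lambda^2 V\times\Lambda^2 V\to\Lambda^4 V$ together with a volume form is exactly this Hodge-$\star$ in the middle degree. Your explicit matrix formula for $\phi$ coincides with the one the paper records in the remark immediately following the lemma, and your verifications of $\phi^2=\mathrm{id}$, ${\rm Pf}\circ\phi={\rm Pf}$, and the transformation law $(*)$ are the details the paper leaves to the reader.
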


\begin{proof}Identifying the space of $4 \times 4$ skew-symmetric matrices over the field $F$ to $\Lambda^2V$, the mapping
$\phi$ is nothing but what's called the Hodge-$\star$ operator 
(with respect to the quadratic form $X_1^2 + X_2^2+X_3^2+X_4^2$) in general from $\Lambda^kV$ to $\Lambda^{n-k}V$; we omit further details.\end{proof}

\begin{remark} One can write down $\phi$ explicitly as follows:
$$X = \left ( \begin{array}{cccc} 
 0 & X_{12}& X_{13}&  X_{14} \\
-X_{12}& 0  & X_{23}& X_{24}    \\
-X_{13}&  -X_{23} & 0  & X_{34} \\
-X_{14} & -X_{24}& -X_{34} &  0 
\end{array} \right )  \longrightarrow \phi(X)=\left ( \begin{array}{cccc} 
 0 & X_{34}& -X_{24}&  X_{23} \\
-X_{34}& 0  & X_{14}& -X_{13}    \\
X_{24}&  -X_{14} & 0  & X_{12} \\
-X_{23} & X_{13}& -X_{12} &  0 
\end{array} \right ),
$$
and it is thus clear too that ${\rm Pf}(X) = {\rm Pf}(\phi(X)).$
\end{remark}

\begin{lemma} Let $E$ be a quadratic separable extension of a field $F$ with $x\rightarrow \bar{x}$ the Galois involution of $E/F$, and let $H$ be any symmetric non-singular matrix over $F$ with $\det H = 1$. Then the automorphism 
$\phi_H:X\rightarrow \phi(H\bar{X}H)$ of the space of 
$4 \times 4$ skew-symmetric matrices over $E$ is of order 2.
\end{lemma}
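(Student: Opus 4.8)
The plan is to show directly that $\phi_H\circ\phi_H$ is the identity on the space of skew-symmetric $4\times 4$ matrices over $E$. Observe first that $\phi_H$ is $F$-linear but Galois-semilinear over $E$: writing it as the composite of the $E$-linear map $Y\mapsto \phi(HYH)$ with the conjugation $X\mapsto\bar X$, one checks $\phi_H(\lambda X)=\bar\lambda\,\phi_H(X)$ for $\lambda\in E$, and hence $\phi_H^2$ is automatically $E$-linear. So it suffices to evaluate $\phi_H^2$ as a scalar operator.

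I would record two elementary facts. (i) The map $\phi$ is given by the explicit formula in the Remark above, whose coefficients all lie in $\{0,\pm 1\}$; therefore $\phi$ has rational integer matrix entries and commutes with the Galois involution, i.e. $\overline{\phi(Y)}=\phi(\bar Y)$ for every skew-symmetric $Y$ over $E$. (ii) Property $(*)$ of the preceding Lemma is a polynomial identity in the entries of $g$ and $X$, so it remains valid after base change from $F$ to $E$; in particular it applies with $g=H$, which lies in $\GL_4(F)\subset\GL_4(E)$ and satisfies ${}^tH=H$ since $H$ is symmetric.

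Now I would compute. Starting from $\phi_H(X)=\phi(H\bar X H)$ and using (i) together with $\bar H=H$ (as $H$ has entries in $F$) and $\bar{\bar X}=X$,
$$\phi_H\bigl(\phi_H(X)\bigr)=\phi\bigl(H\,\overline{\phi(H\bar X H)}\,H\bigr)=\phi\bigl(H\,\phi\bigl(\overline{H\bar X H}\bigr)\,H\bigr)=\phi\bigl(H\,\phi(HXH)\,H\bigr).$$
Applying $(*)$ with $g=H$ and ${}^tH=H$ gives $\phi(HXH)=\det(H)\,H^{-1}\phi(X)H^{-1}$, hence $H\,\phi(HXH)\,H=\det(H)\,\phi(X)$. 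Feeding this back and using linearity of $\phi$ and $\phi\circ\phi=\mathrm{id}$ from the preceding Lemma,
$$\phi_H\bigl(\phi_H(X)\bigr)=\phi\bigl(\det(H)\,\phi(X)\bigr)=\det(H)\,\phi(\phi(X))=\det(H)\,X=X,$$
the last step because $\det H=1$. This proves $\phi_H^2=\mathrm{id}$.

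There is no genuine obstacle; the only points demanding a moment's care are the semilinearity of $\phi_H$ (which is what licenses a pure scalar computation for $\phi_H^2$) and the two bookkeeping facts (i) and (ii). The computation also makes transparent why the hypothesis $\det H=1$ is needed, as flagged in the closing remark: without it one obtains only $\phi_H^2=\det(H)\cdot\mathrm{id}$, which is an involution precisely when $\det H=1$ — which is exactly the restriction to hermitian forms of discriminant $1$ in the construction of the isogeny $\SU(2,2)\to\SO(4,2)$.
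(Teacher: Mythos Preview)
Your proof is correct and follows essentially the same computation as the paper's own proof, which simply writes $\phi_H^2(X)=\phi(H\phi(HXH)H)=\det(H)\,X$ in one line. You have made explicit the two points the paper leaves implicit, namely that $\phi$ commutes with the Galois involution because its matrix entries lie in $F$, and that property $(*)$ applied with $g=H$ symmetric gives $H\phi(HXH)H=\det(H)\phi(X)$; your remark about semilinearity, while not logically needed for the direct computation, is harmless context.
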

\begin{proof}The square of the automorphism $\phi_H: X\rightarrow \phi(H\bar{X}H)$ is 
the automorphism 

$$\xymatrix{ 
X \ar@/^{1.5pc}/[rr] \ar[r]   & \phi(H\bar{X}H) \ar[r] & \phi(H\phi(H{X}H)H) = \det(H)X }.$$
 \end{proof}

\begin{lemma}
The automorphism $X\rightarrow \phi(H\bar{X}H)
$ on the space of 
$4 \times 4$ skew-symmetric matrices over $E$ commutes with the action of the special unitary group $SU(V,H)$ on this space.
\end{lemma}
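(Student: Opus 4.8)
The plan is to verify the intertwining property directly from the transformation law of the Hodge-type operator $\phi$ established in the first lemma of this section, combined with the defining relation of the unitary group. Recall that elements of $\SU(V,H)$ are precisely the matrices $g \in \GL_4(E)$ with $\det g = 1$ and $gH\,{}^t\!\bar g = H$, equivalently $H\,{}^t\!\bar g = g^{-1}H$, equivalently $\bar g = H\,{}^t\!g^{-1}H^{-1}$ (using that $H$ is symmetric over $F$, so ${}^t H = H$). The group $\GL_4(E)$ acts on skew-symmetric matrices by $g\circ X = gX\,{}^t\!g$, and we must show $\phi_H(g\circ X) = g\circ \phi_H(X)$ for all $g \in \SU(V,H)$, where $\phi_H(X) = \phi(H\bar X H)$.

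First I would compute $\phi_H(gX\,{}^t\!g)$. By definition this equals $\phi\bigl(H\,\overline{gX\,{}^t\!g}\,H\bigr) = \phi\bigl(H\bar g\,\bar X\,{}^t\!\bar g\,H\bigr)$. Now I substitute the relations $H\bar g = \bar g^{-1}\cdot(\bar g H \bar g) $ — more cleanly, use $\bar g H = H\,{}^t\!g^{-1}$ (the conjugate of $gH\,{}^t\!\bar g = H$ rearranged, i.e. from $gH\,{}^t\!\bar g = H$ take conjugates to get $\bar g H\,{}^t\!g = H$, hence $\bar g H = H\,{}^t\!g^{-1}$ and ${}^t\!\bar g H = H g^{-1}$ after transposing and using symmetry of $H$). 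Thus
\[
H\bar g\,\bar X\,{}^t\!\bar g\,H = (H\bar g)\,\bar X\,({}^t\!\bar g\,H) = ({}^t\!g^{-1}H)\cdots
\]
wait — I must be careful with the placement; the point is that $H\bar g = H\bar g$ can be rewritten using $\bar g H = H\,{}^t\!g^{-1}$, which gives $H\bar g = {}^t\!g^{-1}\cdot(\,{}^t\!g\,H\bar g\,)$. The clean route: from $\bar g H = H\,{}^t\!g^{-1}$ we get $H = \bar g^{-1} H\,{}^t\!g^{-1}$, so $H\bar g\,\bar X\,{}^t\!\bar g\,H = H\bar g\,\bar X\,{}^t\!\bar g\,\bar g^{-1}H\,{}^t\!g^{-1}$. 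This isn't simplifying symmetrically, so instead I would peel $H$ off both ends using $H\bar g = {}^t\!g^{-1}H$ (which follows by transposing $\bar g H = H\,{}^t\!g^{-1}$ and using ${}^tH = H$) to obtain $H\bar g\,\bar X\,{}^t\!\bar g\,H = {}^t\!g^{-1}(H\bar X H){}^t\!({}^t g^{-1}) \cdot$ — concretely $H\bar g\,\bar X\,{}^t\!\bar g\,H = {}^t\!g^{-1}\,(H\bar X H)\,g^{-1}$. Then apply $\phi$ and invoke relation $(*)$ of the lemma, $\phi({}^t\!g^{-1}Yg^{-1}) = \det(g^{-1})^{-1}\cdot$ — more precisely $(*)$ reads $\phi(hY\,{}^t\!h) = \det(h)\,{}^t\!h^{-1}\phi(Y)h^{-1}$, and applying it with $h = {}^t\!g^{-1}$ gives $\phi({}^t\!g^{-1}(H\bar X H)g^{-1}) = \det({}^t g^{-1})\, g\,\phi(H\bar X H)\,{}^t\!g = \det(g)^{-1} g\,\phi_H(X)\,{}^t\!g$. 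Since $g \in \SU$, $\det g = 1$, so this is exactly $g\circ\phi_H(X)$, as desired.

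The main obstacle will be bookkeeping the transposes and conjugates correctly — in particular getting the rearrangement $H\bar g = {}^t\!g^{-1}H$ right from the unitarity relation and the symmetry ${}^tH = H$, and then matching the shape $hY\,{}^t\!h$ demanded by $(*)$ with $h = {}^t\!g^{-1}$. Once those manipulations are pinned down, the $\det g = 1$ condition is what makes the scalar disappear, which is the only place $\SU$ (rather than $\U$) is used; I would remark on this since it foreshadows the discriminant-$1$ restriction mentioned earlier. I do not expect to need any deeper structure — the whole proof is the displayed chain of four equalities, so in the write-up I would present it as a single $\xymatrix$ or aligned computation exactly as the surrounding lemmas are presented.
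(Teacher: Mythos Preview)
Your proof is correct and follows essentially the same strategy as the paper's: both are short direct computations combining the transformation law $(*)$ for $\phi$ with the unitarity relation. The only cosmetic difference is the order of operations: you first use unitarity (in the form $H\bar g = {}^t\!g^{-1}H$ and ${}^t\!\bar g H = Hg^{-1}$, which come from conjugating ${}^t\!\bar g H g = H$) to rewrite $H\bar g\,\bar X\,{}^t\!\bar g\,H = {}^t\!g^{-1}(H\bar X H)g^{-1}$ and then apply $(*)$ once, whereas the paper applies $(*)$ separately to $\phi(H\bar g\,\bar X\,{}^t\!\bar g\,H)$ and to $g\,\phi(H\bar X H)\,{}^t\!g$, reducing both to expressions in $\phi(\bar X)$ whose equality is exactly the unitarity relation $H^{-1}\,{}^t\!\bar g^{-1} = gH^{-1}$. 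In the final write-up you should drop the false starts and present the computation cleanly; also note that your derivation of $H\bar g = {}^t\!g^{-1}H$ is cleanest if you start from ${}^t\!\bar g H g = H$ rather than from $gH\,{}^t\!\bar g = H$.
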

\begin{proof} We need to prove that:
$$\phi(H\bar{g}\bar{X}\,{}^t\!\bar{g}H) =  g \phi(H\bar{X}H) \,{}^t\!g,$$
but by the defining property $(*)$ of $\phi$ (using that $H$ is a non-singular symmetric matrix over $F$ with $\det H = 1$ 
and $\det g =1$), we have 
\begin{eqnarray*}
\phi(H\bar{g}\bar{X}\,{}^t\!\bar{g}H) & = & H^{-1} \,{}^t\!\bar{g}^{-1}\phi(\bar{X})\bar{g}^{-1}H^{-1}, \\
g \phi(H\bar{X}H) \,{}^t\!g & = & gH^{-1}\phi(\bar{X})H^{-1}\,{}^t\!g.
\end{eqnarray*}
Thus if:
$$H^{-1} \,{}^t\!\bar{g}^{-1}
\phi(\bar{X})\bar{g}^{-1}H^{-1} = gH^{-1}\phi(\bar{X})H^{-1}\,{}^t\!g,$$
we will have proved the lemma. But clearly, this is implied by:
$$H^{-1} \,{}^t\!\bar{g}^{-1} = gH^{-1},$$
which is equivalent to:
$$ \,{}^t\!\bar{g}Hg = H,$$
which is the definition of the unitary group $\U(V,H)$.
 \end{proof}

Note the following general lemma on Galois descent (cf. `The book of involutions' due to Knus et al, Lemma 18.1, 
page 279). 

\begin{lemma}
Let $E$ be a Galois extension of a field $F$, and $W$ a finite dimensional vector space over $E$ equipped with 
a semi-linear action of $G = \Gal(E/F)$ on $W$, 
i.e., there is an $F$-linear action $g \rightarrow \pi(g)$ of $\Gal(E/F)$ on $W$ with $\pi(g)(ew) = g(e) \pi(g)(w)$ for all 
$g \in \Gal(E/F)$, $w \in W$.  
The $F$-subspace $W_0 = W^G$ of $W$ has the property that $W_0\otimes E = W$.
\end{lemma}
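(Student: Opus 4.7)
The plan is to show that the natural $E$-linear map
$$\mu : W_0 \otimes_F E \longrightarrow W, \qquad w \otimes e \longmapsto e w,$$
is an isomorphism, which is exactly the conclusion $W_0 \otimes E = W$. Since the paper only needs the case $[E:F] = 2$, I will assume throughout that $G = \Gal(E/F)$ is finite; the general profinite case would follow by reducing to finite subquotients fixing the finitely many vectors at hand.

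First, I would prove injectivity by the classical minimal-relation argument. Suppose $\sum_{i=1}^n e_i w_i = 0$ is a nontrivial relation with $w_1, \ldots, w_n \in W_0$ that are $F$-linearly independent, and take such a relation of minimal length; after rescaling assume $e_1 = 1$. Applying $\pi(g)$ for $g \in G$ and using $\pi(g) w_i = w_i$ together with the semi-linearity $\pi(g)(ew) = g(e)\pi(g)(w)$ gives $\sum_i g(e_i) w_i = 0$. Subtracting yields the shorter relation $\sum_{i \geq 2} (e_i - g(e_i)) w_i = 0$, so by minimality each $e_i - g(e_i)$ vanishes for every $g \in G$. Hence every $e_i$ lies in $E^G = F$, turning the original relation into an $F$-linear dependence among the $w_i$ — a contradiction.

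For surjectivity, let $V := \mu(W_0 \otimes_F E) \subseteq W$. Then $V$ is a $G$-stable $E$-subspace containing $W_0$, and $W/V$ inherits a semi-linear $G$-action. If $V \neq W$ then $W/V$ is nonzero, and any nonzero $G$-invariant vector in $W/V$ would lift to an element of $W_0$ outside $V$, a contradiction; so the proof reduces to showing that every nonzero finite-dimensional $E$-vector space $W'$ with semi-linear $G$-action satisfies $(W')^G \neq 0$. For this last step I would average: for $w' \neq 0$ in $W'$ and $e \in E$, the element $T_e(w') := \sum_{g \in G} \pi(g)(e w') = \sum_g g(e)\,\pi(g)(w')$ is manifestly $G$-invariant. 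If every $T_e(w')$ vanished, then picking any $E$-linear functional $\lambda : W' \to E$ would give $\sum_g g(e)\,\lambda(\pi(g) w') = 0$ for all $e \in E$; by Dedekind's theorem on the linear independence of distinct field automorphisms of $E$, this forces $\lambda(\pi(g) w') = 0$ for all $g$, and letting $\lambda$ vary forces $\pi(g) w' = 0$ for all $g$, contradicting $w' \neq 0$. The main obstacle is this surjectivity step: the injectivity is essentially automatic book-keeping, whereas producing nonzero invariants from scratch is the real content of Galois descent, and it is precisely the combination of the averaging operator with Dedekind's independence of characters that makes it work.
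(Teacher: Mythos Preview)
The paper does not supply its own proof of this lemma; it merely cites Lemma 18.1 of Knus et al., \emph{The Book of Involutions}. Your injectivity step is fine, but the surjectivity step has a real gap at the lifting claim. You assert that a nonzero $G$-invariant class in $W/V$ lifts to an element of $W_0$ outside $V$; but $W_0 = W^G \subseteq V$ by the very definition of $V$, so the map $W^G \to (W/V)^G$ is identically zero and no nonzero element of $(W/V)^G$ comes from $W_0$. What would make your quotient argument go through is the vanishing $H^1(G,V)=0$ (which would force $(W/V)^G = W^G/V^G = 0$), but that is exactly Speiser's additive Hilbert~90, i.e.\ the substantive content of the descent lemma you are trying to prove --- so the argument as written is circular.

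The fix is already in your hands: apply your averaging operator directly to $W$ rather than to the quotient. Choose an $F$-basis $e_1,\dots,e_n$ of $E$; by Dedekind's independence of characters the matrix $\bigl(g(e_i)\bigr)_{g\in G,\,1\le i\le n}$ is invertible over $E$. For any $w\in W$ the elements $T_{e_i}(w)=\sum_{g\in G} g(e_i)\,\pi(g)(w)$ lie in $W_0$, and inverting that matrix expresses each $\pi(g)(w)$ --- in particular $w=\pi(1)(w)$ --- as an $E$-linear combination of the $T_{e_i}(w)$. Hence $w\in E\cdot W_0$, giving surjectivity directly without any cohomological lifting.
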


It follows from this lemma that the fixed points of the involution $X \rightarrow \phi_H(X) = \phi(H\bar{X}H)$ 
on the vector space $S$ of skew-symmetric matrices over $E$ is a vector space $S_0$ over $F$ 
of dimension 6 with an action of $\SU(V,H)$.

Now $$q(X) = {\rm Pf}(X),$$
the Pfaffian of a  skew-symmetric matrix $X$ over $E$, is an $F$-valued nondegenerate quadratic form on $S_0$ 
which is invariant under $\SU(V,H)$ (since ${\rm Pf}(gX\,{}^t\!g) = \det(g){\rm Pf}(X)$), 
defining the isogeny $\SU(V,H)\rightarrow \SO(S_0)$, 
which for the unitary group defined by the hermitian form:
$$A = \left ( \begin{array}{ccccc} 
  && & &  1 \\
& & &   1 & \\
&  1 &   & & \\
1 & &  &  & 
\end{array} \right ),
$$
lands inside the orthogonal group $\SO(2,4)$ which is  the orthogonal group of the quadratic form of Witt index 2 over $F$ for $ X + E + X^{\vee}$ 
where $X,X^{\vee}$ are maximal isotropic subspaces of $W$ in perfect pairing,
and $E$ is a quadratic separable field extension of $F$ with its $ \mbox{Norm form}$.

The isogeny of algebraic groups $\Sp(4) \rightarrow \SO(2,3)$, together with the inclusion of $\Sp(4) \subset \SU(2,2)$,
gives rise to the following commutative diagram:

$$
\begin{CD}
\Sp(4)
@>  >>  \SU(2,2)\\
 @ VVV  @VVV  \\
\SO(2,3)
@> >>  \SO(2,4).
\end{CD}
$$ 

\begin{remark}The isogeny constructed in this section from $\SU(V,H)$ to an orthogonal group in 6 variables is valid only when one can take $\det H =1$. For instance, over reals, the group $\SU(3,1)$ cannot be isogenous to any one of the groups
$\SO(p,q)$ with $p+q =6$ since an isogeny will also give an isogeny among their maximal compacts, and the maximal compact of $\SU(3,1)$ is $\U(3)$ which is not (isogenous) to the maximal compact subgroup of any one of the $\SO(p,q)$ with
$p+q=6$.
\end{remark} 
\section{Weil representation, and its twisted Jacquet modules}
Let $G$ be a reductive algebraic group over a non-archimedean local field $F$, $P$ a parabolic subgroup of $G$ with Levi decomposition $P=MN$, and
$\psi: N(F) \longrightarrow \C^\times$ a character on $N(F)$. 
In analogy with the Jacquet module, one defines the twisted Jacquet module $\pi_\psi$, for any smooth representation
$\pi$ of $G(F)$ to be the largest quotient of $\pi$ on which $N(F)$ operates by $\psi: N(F) \longrightarrow \C^\times$, i.e.,
$$\pi_\psi = \frac{\pi}{ \{n\cdot v -\psi(n)v | n\in N(F), v \in \pi \} }.$$
These twisted Jacquet modules define an exact functor from smooth representations of $P$ to smooth representations of $M_\psi(F) = \{m\in M(F)| \psi(mnm^{-1})= 
\psi(n), \forall n \in N(F)\},$ i.e., if
$$0\longrightarrow  \pi_1  \longrightarrow  \pi_2 \longrightarrow  \pi_3 \longrightarrow 0,$$
is an exact sequence of smooth $P$-modules, then   
$$0\longrightarrow  \pi_{1,\psi}  \longrightarrow  \pi_{2,\psi} \longrightarrow  \pi_{3,\psi} \longrightarrow 0,$$
is an exact sequence of smooth $M_{\psi}(F)$-modules.

For the dual reductive pair $(\OO(V), \Sp(W))$, we will use  twisted Jacquet modules of the Weil representation of $\Sp(V \otimes W)$ for $P$, a Siegel parabolic
in $\Sp(W)$, and a character $\psi$ on the unipotent radical of such a parabolic subgroup. The twisted Jacquet module is naturally a 
representation of $\OO(V)$, and its structure allows one to relate theta correspondence to distinction of representations.

Before we recall the result on the twisted Jacquet module of the Weil representation, let us   begin by defining the Weil representation itself. 
Let $W = X \oplus X^\vee$ be a symplectic vector space over a
local field $ F$ with $X,X^\vee$ maximal isotropic subspaces in $W$ together with its natural symplectic pairing. Given
a quadratic space $q:V \rightarrow  F$, the Weil representation of $\Sp(V \otimes W)$ gives
rise to a representation of $\OO(V) \times \Sp(W)$ on $\Sc(V
\otimes X^\vee)$, the Schwartz space of locally constant compactly supported functions on $(V \otimes X^\vee)(F)$. 
The Weil representation depends on the choice of a nontrivial additive character $\psi: F \rightarrow \C^\times$ which will be fixed throughout the 
paper. 

Let Us  note that although one talks of Weil representation of $\Sp(V \otimes W)$, it is in fact a representation of a certain two fold (topological) cover 
of $\Sp(V \otimes W)$, called the metaplectic cover of $\Sp(V \otimes W)$, and not of $\Sp(V \otimes W)$ itself. If $\dim V$ is even, then this metaplectic cover
of $\Sp(V \otimes W)$ splits over $\OO(V) \times \Sp(W)$. There is in fact a natural choice of splitting of the metaplectic cover of $\Sp(V \otimes W)$ restricted 
to $\OO(V) \times \Sp(W)$ allowing one to talk of the Weil representation of $\OO(V) \times \Sp(W)$ (for $\dim V$ even).
In this representation, elements of $ \{\phi \in {\Hom}(X^\vee,X)| \phi =
\phi^\vee\} \cong {\rm Sym}^2X,$ which can be identified to the
unipotent radical $N$ of the Siegel parabolic in $\Sp(W)$ stabilizing the isotropic
subspace $X$,
operate on $\Sc(V\otimes X^\vee)$ by
\begin{eqnarray}
(n\cdot f)(x) = \psi((q \otimes q_n)x)f(x), \end{eqnarray}
where $n \in {\Hom}(X^\vee, X)$  gives
rise to a quadratic form $q_n: X^\vee \rightarrow  F,$ which together
with the  quadratic form  $q:V \rightarrow  F$, gives rise to the quadratic
form $q\otimes q_n:V \otimes X^\vee \rightarrow  F$ defined by $(q \otimes q_n)(v \otimes w') = q(v) \cdot q_n(w')$.

The Weil representation realized on $\Sc(V \otimes
X^\vee)$ has the natural action of $\OO(V)$ operating as
\begin{equation*}
L(h)\varphi(x)= \varphi(h^{-1}x).
\end{equation*} The group $\GL(X)$ sits naturally inside
$\Sp(X\oplus X^\vee)$ (preserving $X$ and $X^\vee$),
and its action on $\Sc(V\otimes X^\vee)$ is given by
\begin{equation*}
L(g)\varphi(x)= \chi_V(\det g)|\det g|^{m/2}\varphi(gx),
\end{equation*}
where $m = \dim V$,  
$\chi_V$ is the quadratic character of $ F^\times$ given in terms of
the Hilbert symbol as $\chi_V(a)=(a, {\rm disc} V)$ with ${\rm disc}
V$ the normalized discriminant of $V$. These actions  together with
the action of the Weyl group element (which acts on
$\GL(X)$ sitting inside $\Sp(X \oplus X^\vee)$ through $A\rightarrow
\,{}^t\!A^{-1}$) of $\Sp(W)$ through Fourier
transforms on $\Sc(V\otimes X^\vee)$ --- but which we will not define precisely,
gives the action of $ \OO(V) \times \Sp(W)$ on
$\Sc(V \otimes X^\vee)$.

The Weil representation thus gives rise to a representation of the group 
$\OO(V) \times \Sp(W)$.
Given an irreducible
representation $\pi$ of $\OO(V)$, there exists a representation
$\Theta(\pi)$ of $\Sp(W)$ of finite length, such that $\pi
\otimes \Theta(\pi)$ is the maximal $\pi$-isotypic quotient of $\omega$. It was conjectured by R. Howe that the representation $\Theta(\pi)$ of $\Sp(W)$ has a
unique irreducible quotient $\theta(\pi)$; this conjecture which was proved by Howe in the archimedean case,  by Waldspurger in the non-archimedean case for odd residue characteristic, is now proved in complete generality by W-T. Gan and S. Takeda, cf. [GT]. When one talks about the
theta correspondence, one means the correspondence $\pi \rightarrow
\theta(\pi)$. One can reverse the roles of the groups $\OO(V)$ and $\Sp(W)$
and begin with an irreducible
representation $\pi$ of $\Sp(W)$, and define  a representation
$\Theta(\pi)$ of $\OO(V)$ of finite length, and also the unique irreducible quotient $\theta(\pi)$.

Since $N$, the unipotent radical of the Siegel parabolic of $\Sp(W)$ is a finite dimensional vector space over $F$ 
isomorphic to the space of symmetric elements  in $\Hom[X^\vee, X]$, i.e.,
 $\phi \in \Hom[X^\vee, X]$ such that $\phi^\vee = \phi$, as discussed in the section on Notation, one can
identify the space of  characters $\lambda: N \rightarrow {\mathbb C}^\times$  to symmetric elements
in $\Hom[X,X^\vee]$, i.e., to quadratic forms on $X$,
 through 
the natural non-degenerate pairing:
$$\Hom(X^\vee, X) \times \Hom(X, X^\vee) \longrightarrow \Hom(X^\vee, X^\vee)\stackrel {\rm tr}  \longrightarrow F.$$

Now given a linear map $x:X \rightarrow V$, one can restrict a quadratic form on $V$ to one on $X$; this construction
 plays an important role in the following 
well-known proposition for which we refer to [PR], Corollary 6.2.

\begin{lemma}\label{corollary1} The twisted Jacquet module of the Weil representation corresponding to the
dual reductive pair $(\OO(V), \Sp(W))$ for $N$, the unipotent radical of the Siegel parabolic in $\Sp(W)$ stabilizing $X \subset W$, a maximal isotropic subspace in $W$, is  nonzero exactly for those
characters of $N$ 
which correspond to the `restriction' of quadratic form on
$V$ to $X$ via a linear map $x: X \rightarrow V$.
\end{lemma}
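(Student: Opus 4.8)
The plan is to exploit the fact that on the model $\Sc(V\otimes X^\vee)$ of the Weil representation $\omega$, the unipotent radical $N$ of the Siegel parabolic of $\Sp(W)$ acts purely by multiplication by characters. Indeed, the formula recalled above for the action of $N$ says that for $n\in N=\SHom(X^\vee,X)$ and $f\in\Sc(V\otimes X^\vee)$ one has $(n\cdot f)(y)=\psi\big((q\otimes q_n)(y)\big)f(y)$, so $N$ acts at the point $y\in V\otimes X^\vee$ through the character $\beta(y)\colon n\mapsto\psi\big((q\otimes q_n)(y)\big)$ of $N$. This defines a (polynomial, hence continuous) map $\beta\colon V\otimes X^\vee\to\widehat N$, and, exactly as in the geometrization of representations of a vector group recalled in the section on Clifford theory, the module $\Sc(V\otimes X^\vee)$ for $\Hecke(N)\cong\Sc(\widehat N)$ is the pullback along $\beta$ of the structure sheaf of $V\otimes X^\vee$. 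By the lemma of Jacquet--Langlands, i.e.\ base change for $\Sc$ (cf.\ Lemma~2.33 of [BZ]), the fiber of the associated sheaf at a character $\lambda$ is $\Sc(\beta^{-1}(\lambda))$; hence the twisted Jacquet module $\omega_{N,\lambda}$ is nonzero precisely when $\lambda$ lies in the image of $\beta$, and in that case it is $\Sc(\beta^{-1}(\lambda))$ as an $\OO(V)$-module, with $\OO(V)$ acting by $f\mapsto\big(y\mapsto f(h^{-1}y)\big)$ --- this makes sense on $\beta^{-1}(\lambda)$ because $\OO(V)$ commutes with $N$, hence fixes $\lambda$, and $\beta(h^{-1}y)=\beta(y)$.

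Next I would identify $\beta$ explicitly. Under the identifications $N=\SHom(X^\vee,X)\cong\Sym^2X$ and $\widehat N\cong\SHom(X,X^\vee)=\{\text{quadratic forms on }X\}$ fixed in the section on Notation, together with $V\otimes X^\vee\cong\Hom(X,V)$, the claim is that $\beta$ sends a linear map $x\colon X\to V$ to the pulled-back quadratic form $x^{*}q\colon\xi\mapsto q(x\xi)$ on $X$. This is a direct computation with the associated bilinear forms: for $n\in\Hom(X^\vee,X)$ with attached quadratic form $q_n$ on $X^\vee$, and $y\in V\otimes X^\vee$ viewed as $x\colon X\to V$, one checks $(q\otimes q_n)(y)=\langle n,\,x^{*}q\rangle$ for the non-degenerate pairing between $\SHom(X^\vee,X)$ and $\SHom(X,X^\vee)$ of the section on Notation (this is essentially Corollary~6.2 of [PR]), so that $\beta(x)$ is the character of $N$ corresponding to the quadratic form $x^{*}q$ on $X$.

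Putting these together, the image of $\beta$ is exactly the set of quadratic forms on $X$ of the form $x^{*}q$ for some linear map $x\colon X\to V$, i.e.\ the restrictions to $X$ of the quadratic form $q$ on $V$ via a linear map $X\to V$; and $\omega_{N,\lambda}\neq0$ for precisely those $\lambda$, which is the assertion.

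I expect the only genuinely delicate point to be the bilinear-form bookkeeping identifying $\beta$ with the pullback map $x\mapsto x^{*}q$ (keeping track of the factors of $2$ and the sign conventions hidden in the pairing and in the normalization of $q_n$); once one invokes [PR, Corollary~6.2] for this, everything else is the formal Bernstein--Zelevinsky geometrization argument recalled above. A minor point worth noting is that $\beta$ is a morphism of $\ell$-spaces whose fibers are genuine affine varieties, so that $\Sc(\beta^{-1}(\lambda))\neq0$ as soon as $\beta^{-1}(\lambda)\neq\varnothing$.
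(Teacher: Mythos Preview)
Your proof is correct and is essentially the argument the paper has in mind: the paper does not prove this lemma directly but cites [PR, Corollary~6.2], and then in the next section supplies exactly the general mechanism you are invoking (Lemma~8.1, the [PT] lemma on twisted Jacquet modules of $\Sc(X)$ when $N$ acts through a continuous map $x\mapsto\psi_x$, together with Lemma~8.2 filling in the vanishing $\Sc(X-X_\psi)_\psi=0$). Your Bernstein--Zelevinsky geometrization argument and the identification of $\beta$ with $x\mapsto x^{*}q$ are precisely what underlie those lemmas specialized to the Weil representation, so there is no substantive difference in approach.
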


\begin{proposition}\label{prop2}
The twisted Jacquet module of the Weil representation of the dual
reductive pair $(\OO(V), \Sp(W))$  
for $N$, the unipotent radical of 
the Siegel parabolic in $\Sp(W)$ stabilizing $X \subset W$, a maximal isotropic subspace in $W$,  for the
characters of $N$  which corresponds to a
non-degenerate quadratic form on $X$, which we assume is obtained
by restriction of the quadratic form on $V$  via a linear map $x:
X \rightarrow V$ is as a representation of $\OO(V)$ the representation
$${\rm ind}^{\OO(V) } _{ \OO(X^\perp) } {\mathbb C},$$
where $\OO(X^\perp )$ is the orthogonal group of the orthogonal
complement of $X$ inside $V$ sitting inside $\OO(V)$ by acting trivially on $X$.

\end{proposition}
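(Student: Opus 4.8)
The plan is to make the twisted Jacquet module $\omega_{N,\lambda}$ completely explicit as a space of Schwartz functions on a space of isometric embeddings, and then to read off the $\OO(V)$-action via Witt's extension theorem. First I would unwind the explicit model of the Weil representation recalled above: on $\Sc(V\otimes X^\vee)\cong\Sc(\Hom(X,V))$ the unipotent radical $N\cong\SHom(X^\vee,X)$ acts by $(n\cdot f)(x)=\psi((q\otimes q_n)(x))f(x)$, and a direct (if slightly tedious) computation shows that $(q\otimes q_n)(x)$ equals, up to the usual normalising factor, the value at $n$ of the character of $N$ attached to the quadratic form $x^{*}q$ on $X$, where $x^{*}q\colon\xi\mapsto q(x\xi)$ is the restriction of $q$ along the linear map $x\colon X\to V$ (using the pairing $\SHom(X^\vee,X)\times\SHom(X,X^\vee)\to F$ fixed in the Notation section). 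In other words $N$ acts on $\Sc(\Hom(X,V))$ through the map $\mu\colon\Hom(X,V)\to\widehat N$, $x\mapsto x^{*}q$.

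Next I would invoke the Bernstein--Zelevinsky sheaf-theoretic description of (twisted) Jacquet modules recalled earlier in the paper: for $\lambda\in\widehat N$ corresponding to a quadratic form $Q$ on $X$, the module $\omega_{N,\lambda}$ is the stalk at $Q$ of the sheaf $\mu_{!}\,\underline{\C}$ on $\widehat N$, hence should be $\Sc(\mu^{-1}(Q))=\Sc(\{x\colon X\to V\mid x^{*}q=Q\})$; the support statement here is exactly the content of Lemma \ref{corollary1}. It is at this point that the hypothesis that $Q$ is \emph{non-degenerate} enters in an essential way: it forces every $x$ with $x^{*}q=Q$ to be injective and makes $\mu$ submersive along the fibre $\mu^{-1}(Q)$, which is what licences identifying the stalk with Schwartz functions on the fibre. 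The $\OO(V)$-action, which commutes with $N$ and therefore survives to every twisted Jacquet module, is the one induced from $L(h)\varphi(x)=\varphi(h^{-1}x)$, i.e.\ the translation action of $\OO(V)$ on $\mu^{-1}(Q)$ coming from $h\cdot x=h\circ x$.

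The final step is purely group-theoretic. The fibre $\mu^{-1}(Q)$ is precisely the set of isometric embeddings of $(X,Q)$ into $(V,q)$, which is non-empty by the standing hypothesis that $Q$ is obtained from $q$ by restriction. Since both $(V,q)$ and $Q$ are non-degenerate, Witt's extension theorem gives that $\OO(V)$ acts transitively on this set: given two such embeddings $x_{0},x_{1}$ (both injective), the isometry $x_{1}\circ x_{0}^{-1}\colon x_{0}(X)\to x_{1}(X)$ of subspaces of $V$ extends to some $h\in\OO(V)$, and then $h\circ x_{0}=x_{1}$. Fixing one embedding $x_{0}$ and identifying $X$ with the non-degenerate subspace $x_{0}(X)\subseteq V$, so that $V=X\perp X^{\perp}$, the stabiliser of $x_{0}$ in $\OO(V)$ consists of the isometries fixing $X$ pointwise, which is exactly $\OO(X^{\perp})$. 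Hence $\mu^{-1}(Q)\cong\OO(V)/\OO(X^{\perp})$ as $\OO(V)$-sets, and therefore
$$\omega_{N,\lambda}\cong\Sc\big(\OO(V)/\OO(X^{\perp})\big)\cong{\rm ind}_{\OO(X^{\perp})}^{\OO(V)}{\mathbb C},$$
the last step being the standard identification of compactly supported locally constant functions on $\OO(V)/\OO(X^{\perp})$, with the left-translation action, with un-normalised compact induction of the trivial representation.

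I expect the main obstacle to be the first identification $\omega_{N,\lambda}\cong\Sc(\mu^{-1}(Q))$: one must verify that $\mu$ is well enough behaved near $\mu^{-1}(Q)$ for the stalk of $\mu_{!}\,\underline{\C}$ to be exactly the Schwartz space of the fibre — and this is precisely where non-degeneracy of $Q$ is used (injectivity of the embeddings, submersivity of $\mu$); this is the geometric input one can quote from [PR]. Everything after that — transitivity via Witt's theorem and the identification of the stabiliser with $\OO(X^{\perp})$ — is routine.
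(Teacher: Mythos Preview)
Your argument is correct and is the standard one; the paper itself does not supply a proof but refers to [PR], Corollary~6.2, and the key technical step --- identifying the twisted Jacquet module with $\Sc$ of the fibre $\mu^{-1}(Q)$ --- is precisely the content of Lemma~\ref{lemma3} in Section~8 of the paper (together with the lemma following it, which shows $\Sc(\Hom(X,V)\setminus\mu^{-1}(Q))_\lambda=0$).

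One small correction to your self-assessment: the identification $\omega_{N,\lambda}\cong\Sc(\mu^{-1}(Q))$ does \emph{not} require non-degeneracy of $Q$ or any submersivity of $\mu$. All that is needed is that $\mu^{-1}(Q)$ be closed, which is automatic since $\mu$ is a polynomial map; this is exactly what Lemma~\ref{lemma3} asks for. Non-degeneracy of $Q$ is used only in the step you called routine: it guarantees that every $x\in\mu^{-1}(Q)$ is injective and that $x(X)$ is a non-degenerate subspace, so that $V=x(X)\oplus x(X)^\perp$ and the $\OO(V)$-stabiliser of $x$ is exactly $\OO(X^\perp)$. So the ``main obstacle'' you flagged is in fact the easy part.
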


\begin{remark}Assuming that $\Sp(W)= \SL_2(F)$, so that $\dim X = 1$, in which case the previous proposition 
identifies irreducible 
representations $\pi$ of $\OO(V)$ which are distinguished by $\OO(X^\perp)$ 
to theta lifts of (suitable) representations
of $\SL_2(F)$. Observe that if $\pi$ remains irreducible when restricted to $\SO(V)$, therefore
 the representations $\pi$ and $\pi \otimes \det$ of $\OO(V)$ are distinct, $\pi$ restricted to $\SO(V)$ 
is distinguished by $\SO(X^\perp)$ if and only if one of  the representations $\pi$ or
 $\pi \otimes \det$ of $\OO(V)$ is distinguished by $\OO(X^\perp)$  if and only if one of  the representations $\pi$ or
 $\pi \otimes \det$ of $\OO(V)$ arises as a theta lift from (a suitable representation of) $\SL_2(F)$.
\end{remark}

\begin{remark} There are what are called {\it conservation relations}, now proved in all generality in [SZ],
 which for a representation $\pi$ of $\OO(V)$ dictate 
a relationship between first occurrence of $\pi$ in the tower with members $\Sp_{2n}(F)$, with the first occurrence of
$\pi \otimes \det$ in the same tower. If  we are dealing with representations $\pi$ of $\OO(V)$, $\dim V \geq 3$, arising from 
theta correspondence with   $\SL_2(F)$, these conservation relations will force the first occurrence of
$\pi \otimes \det$ to be much later. 
As a result, $\pi$ cannot be isomorphic to $\pi \otimes \det$, equivalently, 
$\pi$ restricted from $\OO(V)$ to $\SO(V)$ must remain irreducible. Thus it is legitimate for us to use theta correspondence between $\SL_2(F)$ and $\SO(V)$ instead of $\SL_2(F)$ and $\OO(V)$.
\end{remark}

\begin{corollary} \label{cor5} Assume that $\Sp(W)= \SL_2(F)$, so that $\dim X = 1$. 
Embed $X$ as a one-dimensional non-degenerate 
subspace $\langle a \rangle \subset V$. Then 
for an irreducible admissible  representation $\mu$ of $\SO(V)$ which is 
distinguished by $\SO(\langle a \rangle ^\perp)$,  the representation $\Theta(\mu)$  of $\SL_2(F)$  has 
a Whittaker model for the character $\psi_a(x) = \psi(ax)$ (in particular, $\theta(\mu) \not = 0$, although because of the difference between $\Theta(\mu)$ and $\theta(\mu)$, $\theta(\mu)$ may not have 
a Whittaker model for the character $\psi_a(x) = \psi(ax)$). Conversely, if an irreducible admissible  representation of $\SO(V)$ is obtained 
as $\theta(\pi)$ for an irreducible admissible 
representation $\pi$ of $\SL_2(F)$ which has a 
Whittaker model for the character $\psi_a(x) = \psi(ax)$, then $\theta(\pi)$ is distinguished by  $\SO(\langle a \rangle ^\perp)$.
\end{corollary}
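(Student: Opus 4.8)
The plan is to read everything off Proposition \ref{prop2} after applying the (exact) twisted Jacquet functor in the $\SL_2(F)$-variable of the Weil representation $\omega$ on $\Sc(V\otimes X^\vee)$. Since $\dim X=1$, the Siegel unipotent radical $N$ of $\Sp(W)=\SL_2(F)$ is just its upper-triangular unipotent subgroup, and under the identification of characters of $N$ with quadratic forms on $X$ described in \S2, the character $\psi_a(x)=\psi(ax)$ corresponds to $\xi\mapsto a\xi^2$, i.e.\ to the restriction of $q$ along the map $x:X\to V$ carrying a basis vector of $X$ to a vector of norm $a$ spanning $\langle a\rangle$. Thus $X^\perp=\langle a\rangle^\perp$, and since this form is non-degenerate, Lemma \ref{corollary1} together with Proposition \ref{prop2} gives an isomorphism of $\OO(V)$-modules $\omega_{N,\psi_a}\cong{\rm ind}_{\OO(\langle a\rangle^\perp)}^{\OO(V)}\C$. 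Because $\OO(\langle a\rangle^\perp)$ (acting trivially on $\langle a\rangle$) meets the non-identity component of $\OO(V)$, restricting to $\SO(V)$ yields $\omega_{N,\psi_a}|_{\SO(V)}\cong{\rm ind}_{\SO(\langle a\rangle^\perp)}^{\SO(V)}\C$.

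Next I would combine this with the definition of big theta. Writing $\omega|_{\SO(V)\times\SL_2}\twoheadrightarrow\mu\boxtimes\Theta(\mu)$ for the maximal $\mu$-isotypic quotient in the $\SO(V)$-variable, and applying the exact functor $(-)_{N,\psi_a}$ (which affects only the $\SL_2$-variable), one gets a surjection $\omega_{N,\psi_a}\twoheadrightarrow\mu\boxtimes\Theta(\mu)_{N,\psi_a}$. Hence if $\mu$ is distinguished by $\SO(\langle a\rangle^\perp)$, then by Frobenius reciprocity applied to the isomorphism above (both groups being unimodular, and using that distinction by $\SO(\langle a\rangle^\perp)$ is insensitive to passing to the contragredient — conjugation by the reflection $r_a$ normalizes $\SO(\langle a\rangle^\perp)$ trivially and realizes the MVW involution on $\SO(V)$) one has $\Hom_{\SO(V)}(\omega_{N,\psi_a},\mu)\neq0$; feeding a nonzero such map through the surjection forces $\Theta(\mu)_{N,\psi_a}\neq0$, i.e.\ $\Theta(\mu)$ has a $\psi_a$-Whittaker model. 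In particular $\Theta(\mu)\neq0$, so $\theta(\mu)\neq0$ as well — though the passage $\Theta(\mu)\twoheadrightarrow\theta(\mu)$ may kill the Whittaker functional, so nothing is claimed for $\theta(\mu)$.

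For the converse I would run the same computation in the other order. Starting from an irreducible admissible $\pi$ of $\SL_2(F)$ with $\pi_{N,\psi_a}\neq0$ and from $\omega|_{\SL_2\times\SO(V)}\twoheadrightarrow\pi\boxtimes\Theta(\pi)\twoheadrightarrow\pi\boxtimes\theta(\pi)$ (the last arrow because $\theta(\pi)$ is the unique irreducible quotient of $\Theta(\pi)$), I apply $(-)_{N,\psi_a}$ to get a surjection $\omega_{N,\psi_a}\twoheadrightarrow\pi_{N,\psi_a}\boxtimes\theta(\pi)$. Choosing a nonzero linear functional on $\pi_{N,\psi_a}$ produces a nonzero $\SO(V)$-map ${\rm ind}_{\SO(\langle a\rangle^\perp)}^{\SO(V)}\C=\omega_{N,\psi_a}\twoheadrightarrow\theta(\pi)$, and Frobenius reciprocity (again with the contragredient/MVW bookkeeping) yields a nonzero $\SO(\langle a\rangle^\perp)$-invariant functional on $\theta(\pi)$, i.e.\ $\theta(\pi)$ is distinguished by $\SO(\langle a\rangle^\perp)$.

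I expect the one point genuinely needing care to be the compatibility of the $\Theta$-construction with the twisted Jacquet functor — namely that after taking $(-)_{N,\psi_a}$ the quotient $\mu\boxtimes\Theta(\mu)_{N,\psi_a}$ is still the maximal $\mu$-isotypic quotient, which is the Howe-duality-flavoured persistence input (and what would make the first implication reversible, giving slightly more than stated). The remaining ingredients — the dictionary between characters of $N$ and quadratic forms on $X$, the $\OO(V)$-versus-$\SO(V)$ passage, and the translation of "invariant functional" into "invariant vector" via the reflection $r_a$ — are routine.
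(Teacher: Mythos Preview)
Your argument is correct and follows exactly the route the paper intends: the corollary is stated immediately after Proposition~\ref{prop2} with no separate proof, and your write-up supplies precisely the details the paper leaves implicit (the identification of $\psi_a$ with the quadratic form obtained by embedding $X$ as $\langle a\rangle$, the $\OO(V)$-to-$\SO(V)$ passage via Mackey, the exactness of the twisted Jacquet functor applied to $\omega\twoheadrightarrow\mu\boxtimes\Theta(\mu)$, and the contragredient bookkeeping via the reflection $r_a$ and MVW).

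Two small comments on points you flag yourself. First, the ``compatibility of $\Theta$ with the twisted Jacquet functor'' that you worry about is not a Howe-duality input but a formal consequence of the definition: since $\omega(\mu)=\bigcap_{\phi\in\Hom_{\SO(V)}(\omega,\mu)}\ker\phi$ is automatically $\SL_2$-stable (the two group actions commute), any $\SO(V)$-map $\omega_{N,\psi_a}\to\mu$ lifts to an $\SO(V)$-map $\omega\to\mu$ whose kernel contains $\omega(\mu)$, hence factors through $\mu\boxtimes\Theta(\mu)$; this is what makes your phrase ``feeding through the surjection'' work. Second, the Frobenius step in the forward direction genuinely needs the duality $(\mathrm{ind}_H^G\C)^\vee\cong\mathrm{Ind}_H^G\C$ (unimodularity) together with the observation that for irreducible admissible $\mu$, an embedding $\mu^\vee\hookrightarrow\Pi^\vee$ always yields a nonzero map $\Pi\to\mu$ by restricting the double-dual surjection to $\Pi\subset(\Pi^\vee)^\vee$; your parenthetical about unimodularity and MVW points at exactly this, so the argument is complete.
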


\begin{remark}
It should be emphasized that in the corollary, we take small theta lift from $\SL_2(F)$ to $\SO(V)$, but big theta 
lift from $\SO(V)$ to $\SL_2(F)$. It is known that the various sub-quotients of the representation $\Theta(\mu)$ 
of $\SL_2(F)$ have the same cuspidal support, and therefore if $\theta(\mu)$ is either cuspidal, or 
is an irreducible principal series, we can replace $\Theta(\mu)$ in the corollary by $\theta(\mu)$. However,
if $\theta(\mu)$ is a component of a reducible principal series, there is a definite possibility of having a difference
between $\Theta(\mu)$ and $\theta(\mu)$ which can affect the conclusion of the corollary 
(if we were to replace $\Theta(\mu)$ by $ \theta(\mu)$). 
\end{remark}

\begin{remark} A consequence of the above corollary is that small theta lift from $\SL_2(F)$ to $\SO(V)$, 
$V$ any quadratic space of dimension $n \geq 4$, of different irreducible (infinite dimensional)
representations of $\SL_2(F)$ which belong to the {\it same} $L$-packet, and therefore have  
Whittaker model for characters $\psi_a(x) = \psi(ax)$, for which $ a \in F^\times/F^{\times 2}$ belong to {\it different} 
cosets, are distinguished by $\SO(\langle a \rangle ^\perp)$;
these subspaces $\langle a \rangle  ^\perp$ have different 
discriminants, and therefore belong to different pure inner-forms of $\SO_{n-1}(F)$. Thus assuming that 
the theta lift of an $L$-packet on  $\OO(V)$ to $\SL_{2}(F)$   makes up a subset of an $L$-packet on 
$\SL_{2}(F)$, we are able to make a contribution to the Gan-Gross-Prasad conjectures for non-tempered representations: that inside an
$L$-packet on $\SO(V)$, there is a unique member which is distinguished by $\SO(W)$ for $W$ a fixed 
codimension one subspace of $V$, i.e.,
multiplicity one holds in such an $L$-packet (and these representations on $\SO(V)$ arise by theta lift from $\SL_2(F)$); further, if instead of $V$ we take the unique other quadratic space $V'$ over $F$ with the same discriminant as $V$, 
then for   $W'=\langle a \rangle ^\perp$, the  orthogonal complement of $\langle a \rangle $ in $V'$, the same analysis proves that a 
theta lift from $\SL_2(F)$ to $\SO(V)$ is distinguished by $\SO(W)$ 
if and only if the theta lift from $\SL_2(F)$ to $\SO(V')$ is distinguished by $\SO(W')$, i.e., 
in the extended Vogan $L$-packet of the pair $(\SO(V),\SO(W))$, 
the multiplicity of distinguished  representations is 2 instead of 1 in the usual Gross-Prasad conjectures (for generic $L$-packets).
\end{remark}

\begin{remark} Corollary \ref{cor5} in various forms has been around in the literature, for example let us  briefly compare it to the work
of Waldspurger [Wa] on toric periods, see e.g., Proposition 14 in [Wa]. In this work of Waldspurger, which is for $V$ a quadratic space of
dimension 3, in which case $\SO(V)$ is either $\PGL_2(F)$ or $\PD^\times$, for the unique quaternion division algebra $D$ over $F$, and $\SO(\langle a \rangle ^\perp)$ is $E^\times /F^\times$ where $E$ is a quadratic algebra over $F$ with the natural embeddings $E^\times/F^\times \hookrightarrow \PGL_2(F)$, and 
$E^\times/F^\times \hookrightarrow \PD^\times$. Waldspurger deals with representations on the metaplectic cover $\overline{\SL}_2(F)$ of $\SL_2(F)$, and concludes as we do, that there is a bijective correspondence between representations of $\PGL_2(F)$ (or of $\PD^\times$) which have a nontrivial toric period for 
$E^\times/F^\times$ with  the corresponding representations of $\overline{\SL}_2(F)$ which have a nontrivial Whittaker functional. For  $\dim(V) =4$, compare Corollary \ref{cor5} with the results of Roberts [Ro], Theorem 7.4 and Corollary 7.5.  
\end{remark}

\section{A lemma on twisted Jacquet modules}
The aim of this section is to fill in a certain detail in Lemma 6.3 of [PT]. For this purpose we first recall that 
lemma (in a suitably modified form).

\begin{lemma}\label{lemma3}
Let $X$ be the $F$-rational points of an algebraic variety defined
over a local field $F$. Let $P$ be a locally compact totally
disconnected group with $P= MN$ for a normal subgroup $N$ of $P$
which we assume is a union of compact subgroups. Assume that $P$
operates smoothly on $\Sc(X)$, and that the action of $P$
restricted to $M$ is given by an action of $M$ on $X$. Suppose that there
is a continuous map from $X$ to characters on $N(F)$, $x \rightarrow \psi_x$,
such that $N$ operates on $\Sc(X)$ by $(n \cdot f)(x) = \psi_x(n)f(x)$.
Fix a character $\psi: N \rightarrow {\mathbb C}^\times$, and let
$M_\psi$ denote the subgroup of $M$ which stabilizes the character
$\psi$ of $N$. The group $M_\psi$ acts on the set of points $x \in
X$ such that $\psi_x = \psi$. Denote this set of points in $X$ by
$X_\psi$ which we assume to be closed in $X$. Then,
$$\Sc(X)_\psi \cong \Sc(X_\psi)$$
as $M_\psi$-modules.
\end{lemma}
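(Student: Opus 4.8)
The plan is to realise the twisted Jacquet module $\Sc(X)_\psi$ concretely through the restriction-of-functions map together with the Bernstein--Zelevinsky localization sequence. Since $X_\psi$ is closed in $X$, put $U = X \setminus X_\psi$, an open subset, and recall the short exact sequence of $\Sc$-modules (in fact of $M_\psi$-modules, since $M_\psi$ stabilizes both $X_\psi$ and $U$)
\[ 0 \longrightarrow \Sc(U) \longrightarrow \Sc(X) \longrightarrow \Sc(X_\psi) \longrightarrow 0, \]
where the first map is extension by zero and the second is restriction. Applying the $\psi$-coinvariants functor $(-)_{N,\psi}$, which is right exact (coinvariants always are, and full exactness is recalled in the section on the Weil representation), yields an exact sequence $\Sc(U)_{N,\psi} \to \Sc(X)_{N,\psi} \to \Sc(X_\psi)_{N,\psi} \to 0$. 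So the lemma reduces to three points: (i) $\Sc(X_\psi)_{N,\psi} = \Sc(X_\psi)$; (ii) $\Sc(U)_{N,\psi} = 0$; and (iii) all the identifications are $M_\psi$-equivariant.

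Parts (i) and (iii) are formal. For (i): by hypothesis $\psi_x = \psi$ for every $x \in X_\psi$, so $N$ acts on $\Sc(X_\psi)$ by the single character $\psi$, i.e.\ $(n\cdot f)(x) = \psi(n)f(x)$, whence every relation $n\cdot f - \psi(n) f$ vanishes and $\Sc(X_\psi)_{N,\psi} = \Sc(X_\psi)$. For (iii): compatibility of the $M$- and $N$-actions on $\Sc(X)$ forces $\psi_{mx} = {}^m\psi_x$ for $m \in M$; hence $m \in M_\psi$ carries $X_\psi$ into itself (and $U$ into itself), so restriction $\Sc(X) \to \Sc(X_\psi)$ is $M_\psi$-equivariant and so is the resulting isomorphism. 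One should also record the elementary point, standard for Schwartz functions on totally disconnected locally compact spaces along a closed subspace, that restriction $\Sc(X) \to \Sc(X_\psi)$ is surjective with kernel exactly $\Sc(U)$.

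The substance is part (ii): the assertion that if $Y$ is a totally disconnected locally compact space on which $N$ acts by $(n\cdot f)(y) = \psi_y(n) f(y)$ with $y \mapsto \psi_y$ continuous and $\psi_y \neq \psi$ for all $y \in Y$, then $\Sc(Y)_{N,\psi} = 0$; we apply it with $Y = U$. I would argue locally. Given $y \in Y$, choose $n_0 \in N$ with $\psi_y(n_0) \neq \psi(n_0)$; since $N$ is a union of compact subgroups, $n_0$ lies in a compact subgroup $N_0 \subset N$, and by continuity of $y' \mapsto \psi_{y'}$ together with smoothness of the $N$-action one can pick a compact open neighbourhood $V$ of $y$ on which $\psi_{y'}|_{N_0}$ is one fixed character $\chi_V$ with $\chi_V \neq \psi|_{N_0}$. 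For $f \in \Sc(Y)$ supported in $V$, averaging over $N_0$ gives $\frac{1}{\mathrm{vol}(N_0)}\int_{N_0}\psi(n)^{-1}(n\cdot f)\,dn = \big(\frac{1}{\mathrm{vol}(N_0)}\int_{N_0}\psi(n)^{-1}\chi_V(n)\,dn\big) f = 0$, because $\psi^{-1}\chi_V$ is a nontrivial character of $N_0$; on the other hand this average differs from $f$ by an element of the span of $\{\,n\cdot f' - \psi(n) f' : n \in N,\ f' \in \Sc(Y)\,\}$, the integral being a finite sum by local constancy. Thus every such $f$ dies in $\Sc(Y)_{N,\psi}$, and writing an arbitrary element of $\Sc(Y)$ as a finite sum of functions each supported in such a $V$ proves (ii). Combining (i)--(iii) with the exact sequence above gives $\Sc(X)_\psi \cong \Sc(X_\psi)$ as $M_\psi$-modules.

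The main obstacle is precisely this local vanishing (ii): one must combine the continuity of $x \mapsto \psi_x$, the smoothness of the action, and the hypothesis that $N$ be exhausted by compact subgroups in order to produce, uniformly over a small open set, a \emph{single} compact subgroup of $N$ on which the characters $\psi_{y'}$ and $\psi$ disagree — once that is in hand the averaging argument is routine, and everything else in the proof is bookkeeping.
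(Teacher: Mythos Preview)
Your proof is correct, and its overall architecture matches the paper's exactly: the same short exact sequence $0 \to \Sc(X\setminus X_\psi) \to \Sc(X) \to \Sc(X_\psi) \to 0$, the same reduction to the three points (i)--(iii), and the same identification of the substantive step as the vanishing $\Sc(X\setminus X_\psi)_{N,\psi}=0$.

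Where you diverge is in the proof of that vanishing. The paper twists to $\psi=1$, reduces to compact open $X' \subset X\setminus X_\psi$, observes that the span of the functions $(1-\psi_x(n))f(x)$ is an \emph{ideal} in the commutative algebra $\Sc(X')$, and then invokes a Gelfand--Naimark style argument: were this ideal proper, it would lie in a maximal ideal, which corresponds to evaluation at some point $x_0\in X'$, forcing $\psi_{x_0}=1$ --- contradicting $x_0\notin X_\psi$. Your argument is instead direct and local: at each $y$ pick $n_0$ with $\psi_y(n_0)\neq\psi(n_0)$, use compactness of $N_0\ni n_0$ (so $\widehat{N_0}$ is discrete) together with continuity of $y'\mapsto\psi_{y'}$ to get a neighbourhood on which $\psi_{(\cdot)}|_{N_0}$ is constant and distinct from $\psi|_{N_0}$, and then kill functions supported there by averaging over $N_0$. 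Your route makes explicit use of the hypothesis that $N$ is a union of compact subgroups (the paper's ideal-theoretic argument does not visibly use it for the vanishing itself, only implicitly through exactness of the Jacquet functor), and it avoids any appeal to the maximal ideal space of $\Sc(X')$; the paper's route is slicker but less constructive. Both are valid.
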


The proof of this  lemma in [PT] depends on the
 exact sequence of $M_\psi$-modules,
$$0 \longrightarrow \Sc(X-X_\psi) \longrightarrow \Sc(X) \longrightarrow \Sc(X_\psi) \longrightarrow 0.$$
It is asserted in [PT] that since taking the $\psi$-twisted Jacquet functor is exact, and 
$\Sc(X-X_\psi)_\psi = 0$, 
the lemma follows. However, the fact that $\Sc(X-X_\psi)_\psi = 0$, needs 
an argument which we supply now. 

\begin{lemma}With the same conditions as in Lemma \ref{lemma3}, assume that 
$\psi$ is a character of $N$ which is not 
of the form 
$\psi_x$ for any $x\in X$, then the twisted Jacquet module $\Sc(X)_\psi = 0$.
\end{lemma}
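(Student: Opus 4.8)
The plan is to localize the non-vanishing of the twisted Jacquet module at a single point of $X$, exploiting the fact that $N$ acts on $\Sc(X)$ fiberwise by $(n\cdot f)(x)=\psi_x(n)f(x)$. The key observation is that if $\psi\neq\psi_x$ for all $x\in X$, then for every $x$ there exists $n_x\in N$ with $\psi_x(n_x)\neq\psi(n_x)$, and by continuity of $x\mapsto\psi_x$ together with smoothness of the action, this persists on a neighborhood of $x$.

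**First I would** recall that since $N$ is a union of compact subgroups and acts smoothly on $\Sc(X)$, for any $f\in\Sc(X)$ and any compact open $U\subset X$ on which $f$ is supported, there is a compact open subgroup $N_0\subset N$ acting trivially on the relevant finite-dimensional space of functions; more precisely, the character $\psi_x$ restricted to $N_0$ is trivial for $x$ in a small enough neighborhood. **Then I would** argue as follows: it suffices to show that every $f\in\Sc(X)$ lies in the subspace $\{n\cdot g-\psi(n)g\mid n\in N, g\in\Sc(X)\}$. Cover $\operatorname{supp}(f)$ by finitely many compact open sets $U_1,\dots,U_r$ such that on each $U_j$ there is a single $n_j\in N$ with $\psi_x(n_j)\neq\psi(n_j)$ for all $x\in U_j$ (possible because for each point $x_0$ one picks $n_{x_0}$ with $\psi_{x_0}(n_{x_0})\neq\psi(n_{x_0})$, and the set where $\psi_x(n_{x_0})=\psi(n_{x_0})$ is closed, hence its complement is an open neighborhood of $x_0$; compactness of $\operatorname{supp}(f)$ gives a finite subcover, which we refine to be disjoint). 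Writing $f=\sum_j f_j$ with $f_j$ supported in $U_j$, on $U_j$ the scalar $\psi_x(n_j)-\psi(n_j)$ is a nonvanishing locally constant function, so $h_j:=f_j/(\psi_{\cdot}(n_j)-\psi(n_j))$ is again in $\Sc(X)$ (extend by zero outside $U_j$), and $f_j=(n_j\cdot h_j-\psi(n_j)h_j)$. Hence $f$ lies in the defining subspace, so $\Sc(X)_\psi=0$.

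**The main obstacle** I anticipate is the measure-theoretic/topological bookkeeping needed to pass from the pointwise statement ``$\psi\neq\psi_x$ for each $x$'' to a locally uniform one: one must use that $x\mapsto\psi_x$ is continuous into the (appropriately topologized) character group of $N$, that $N$ is a union of compact subgroups (so that a character is determined by its values on a compact open subgroup on the relevant scale), and that $\Sc(X)$ consists of locally constant compactly supported functions so the division by $\psi_x(n_j)-\psi(n_j)$ stays within the Schwartz space. Once the locally uniform separation is in hand, the algebraic step reducing $f$ to the span of $\{n\cdot g-\psi(n)g\}$ is routine. It may be cleanest to first handle the case where $X$ is a single $M_\psi$-free point-free orbit and then patch, but the partition-of-unity argument above avoids needing any orbit structure.
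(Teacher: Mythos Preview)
Your argument is correct and is a genuinely different route from the paper's. The paper first twists by $\psi^{-1}$ to reduce to the untwisted Jacquet module, reduces to a compact open $X'\subset X$, and then observes that the span of the functions $(1-\psi_x(n))f(x)$ is an \emph{ideal} in the commutative ring $\Sc(X')$; it then invokes the Gelfand--Naimark description of maximal ideals of $\Sc(X')$ as evaluation at points to conclude that a proper ideal would force $\psi_{x_0}=1$ for some $x_0$, contradicting the hypothesis. Your approach instead gives an explicit partition-of-unity construction: for each $f$ you produce, on each piece $U_j$ of a disjoint compact open cover of $\operatorname{supp}(f)$, a single $n_j$ separating $\psi$ from every $\psi_x$, and then divide by the nowhere-vanishing locally constant function $\psi_{\cdot}(n_j)-\psi(n_j)$ to exhibit $f_j$ as $n_j\cdot h_j-\psi(n_j)h_j$.

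Your method is more elementary---it avoids the Gelfand--Naimark theorem entirely---and is fully constructive. The paper's argument is shorter to state but relies on a structural fact about maximal ideals. One small point worth making explicit in your write-up: the local constancy of $x\mapsto\psi_x(n_j)$ (needed so that $h_j\in\Sc(X)$) follows immediately from smoothness of the $N$-action, since $n_j\cdot 1_U$ lies in $\Sc(X)$ and equals $\psi_{\cdot}(n_j)\cdot 1_U$; you gesture at this, but it is the one place where continuity of $x\mapsto\psi_x$ alone is not quite enough.
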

\begin{proof} By twisting the action of $N$ on $\Sc(X)$ by $\psi^{-1}$, it suffices to assume that $\psi = 1$, so that
we are dealing with standard Jacquet modules.

Since $N$ operates on $\Sc(X)$ by $(n \cdot f)(x) = \psi_x(n)f(x)$, it is clear that $N$ leaves $\Sc(X')$ invariant
for any $X'$ which is a compact open subset of $X$. Since $X$ is a union of compact open subsets, $\Sc(X)$ is a 
union (direct limit) of $\Sc(X')$ where $X'$ runs over all compact open subsets of $X$. It is easy to see that to prove
that the Jacquet module $\Sc(X)_N=0$, it suffices to prove that $\Sc(X')_N=0$ for any compact open subset $X'$ of $X$.

To prove that $\Sc(X')_N=0$, we need to prove that 
\begin{eqnarray*}
\Sc(X')[N] &  := & \left \{ f -n\cdot f| n \in N(F), f \in \Sc(X') \right \} \\ 
&=&  \{ (1-\psi_x(n))f(x)| n \in N(F), f \in \Sc(X')\} \\ 
& = & \Sc(X').
\end{eqnarray*}  

It is clear that the subspace of $\Sc(X')$ generated by functions of the form  
$(1-\psi_x(n))f(x)$ where $ n \in N(F)$, and $ f \in \Sc(X')$ 
is an ideal in $\Sc(X')$. 
If this was a proper ideal, it would be contained in 
a maximal ideal, and therefore by the well-known Gelfand-Naimark theorem, all functions in this subspace must vanish 
at some point  $x_0 \in X'$. (We took $X'$ to be compact to be able to apply Gelfand-Naimark theorem; also it 
may be mentioned that although $\Sc(X')$ is not the space of {\it all} continuous functions on $X'$, the conclusion of Gelfand-Naimark theorem --- and its proof --- that the maximal ideals in the space of continuous functions $\C(X')$ 
are in bijective correspondence with points 
of $X'$ is the same for
$\Sc(X')$.) 

For the space of functions generated by 
$(1-\psi_x(n))f(x)$ 
where $ n \in N(F)$, and $ f \in \Sc(X')$,  to vanish at $x_0 \in X'$, we must have 
$(1-\psi_{x_0}(n)) = 0$ for all $n \in N(F)$, which is the same as saying $\psi_{x_0} = 1$, a contradiction 
to our hypothesis that the character $\psi$ (taken to be trivial) is not among the characters $\psi_x, x \in X$, 
 proving that $\Sc(X)_\psi = 0$. 
\end{proof}

\section{Application to distinction of representations}

Let $V = X + E + X^\vee$  
be  a quadratic space of dimension 6 where $X$ and $X^\vee$ are totally isotropic subspaces of $V$ of dimension 2 over $F$ in duality with each other under the
associated bilinear form, and both perpendicular to the space $E$ which is a quadratic field extension of $F$ with its associated norm form ${\mathbb N}m(e) = e\bar{e}$. 
Thus the orthogonal group $\SO(V)$ is a quasi-split orthogonal group which is split by $E$, and may be written as $\SO(4,2)$.

Since $V$ is an isotropic quadratic space, it represents all elements of $F^\times$, i.e., given $a \in F^\times$, there exists $v \in V$ such that $q(v) = a$. On the other hand, 
it is clear that the one dimensional quadratic space $\langle a \rangle $ 
can be embedded inside $(E, \Nm)$ as a quadratic subspace if and only if $a \in F^\times$ is a norm from
$E^\times$. 
It follows that  $\langle a \rangle ^\perp \subset V $ is a split quadratic space if and only if $a\in F^\times$ 
is a norm from $E^\times$, in which case 
$\SO(\langle a \rangle ^\perp)$
could be written as $\SO(3,2)$; if $a \in F^\times$ is not a norm from $E^\times$,  $\SO(\langle a \rangle ^\perp)$
could be written as $\SO(4,1)$ as it is then a quasi-split form of $\SO(5)$ of rank 1 which is split by $E$.

\begin{proposition} Let $\pi$ be an irreducible admissible representation of $\SL_2(F)$ 
which is obtained as a theta lift (for a fixed $\psi:F \rightarrow \C^\times$) from $\OO(2) = \OO(E)$. Then if $\pi$ 
has a Whittaker model for the characters $\psi_a(x) = \psi(ax)$ then $a$ must belong to ${\mathbb N}m(E^\times)$. Conversely, 
an irreducible admissible representation of $\SL_2(F)$ which is dihedral with respect to $E$, i.e., 
is obtained as a  theta lift from
$\OO(b\cdot E)$ for {\it some} $b \in F^\times$,
and  has a Whittaker model 
for a character $\psi_a(x) = \psi(ax)$ for $a \in {\mathbb N}m(E^\times)$, then it is obtained as a theta lift from $\OO(2) = \OO(E)$.
\end{proposition}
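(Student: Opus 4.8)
The plan is to read off both assertions from the twisted Jacquet module of the Weil representation $\omega$ of the dual pair $(\OO(E),\SL_2(F))$, using Lemma \ref{corollary1} (whose refinement for non-degenerate characters is Proposition \ref{prop2}). Here $\SL_2(F)=\Sp(W)$ with $W$ two-dimensional symplectic, its Siegel parabolic is the Borel $B=TN$ with $N\cong F$, and the maximal isotropic subspace $X\subset W$ stabilized by $N$ is one-dimensional; under the identification of characters of $N$ with quadratic forms on $X\cong F$ recalled just before Lemma \ref{corollary1}, the character $\psi_a(x)=\psi(ax)$ corresponds to the form $t\mapsto at^2$ on $X$. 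Lemma \ref{corollary1} then says that the twisted Jacquet module $\omega_{\psi_a}$ is nonzero precisely when $t\mapsto at^2$ is the restriction of the norm form of $E$ along some linear map $X\to E$, i.e. precisely when $a=\Nm(e)$ for some $e\in E$; as $a\neq 0$ this means $a\in\Nm(E^\times)$.

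For the direct statement, suppose $\pi$ is obtained as a theta lift from $\OO(E)$, say $\pi$ is a subquotient of $\Theta(\mu)$ for some irreducible representation $\mu$ of $\OO(E)$. The surjection $\omega\twoheadrightarrow\mu\boxtimes\Theta(\mu)$ of $\OO(E)\times\SL_2(F)$-modules, followed by projection to one copy of the $\mu$-factor, exhibits $\Theta(\mu)$ as a quotient of $\omega|_{\SL_2(F)}$; hence $\pi$ is a subquotient of $\omega|_{\SL_2(F)}$. Since the twisted Jacquet functor $(\,\cdot\,)_{\psi_a}$ is exact, $\pi_{\psi_a}$ is a subquotient of $\omega_{\psi_a}$. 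If $\pi$ carries a $\psi_a$-Whittaker functional then $\pi_{\psi_a}\neq 0$, so $\omega_{\psi_a}\neq 0$, and by the first paragraph $a\in\Nm(E^\times)$.

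For the converse I run the identical computation with $E$ replaced by $b\cdot E$: the norm form of $b\cdot E$ represents, on nonzero vectors, exactly the set $b\cdot\Nm(E^\times)$, so if $\pi$ is obtained as a theta lift from $\OO(b\cdot E)$ and has a $\psi_a$-Whittaker model then $a\in b\cdot\Nm(E^\times)$. Together with the hypothesis $a\in\Nm(E^\times)$ this gives $\Nm(E^\times)\cap b\cdot\Nm(E^\times)\neq\emptyset$, hence, since $\Nm(E^\times)$ is a subgroup of $F^\times$, $b\in\Nm(E^\times)$. By the discussion at the end of the Notation section, $b\in\Nm(E^\times)$ is equivalent to $b\cdot E\cong E$ as quadratic spaces; fixing an isometry $b\cdot E\cong E$ transports the Weil representation of $(\OO(b\cdot E),\SL_2(F))$, together with its canonical splitting over $\OO(b\cdot E)\times\SL_2(F)$, to that of $(\OO(E),\SL_2(F))$, and therefore identifies the two theta correspondences. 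Hence $\pi$, being a theta lift from $\OO(b\cdot E)$, is a theta lift from $\OO(E)$, as claimed.

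The argument is short, and its only delicate point — the closest thing to an obstacle — is the passage from ``$\pi$ is obtained as a theta lift'' to ``$\pi$ is a subquotient of $\omega|_{\SL_2(F)}$'': one must keep the big theta $\Theta$ and the small theta $\theta$ straight, but this causes no real trouble because the conclusion is only ever used up to passing to a subquotient. One should also record that the entire computation is uniform in $b\in F^\times$, so the version of the first paragraph with $E$ replaced by $b\cdot E$ is verbatim the same argument.
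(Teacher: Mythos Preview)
Your proof is correct and follows essentially the same approach as the paper's own proof. The paper invokes equation~(1) directly to compute that the only characters $\psi_a$ supporting a nonzero twisted Jacquet module of the Weil representation for $(\OO(E),\SL_2(F))$ are those with $a\in\Nm(E^\times)$, and then runs the same computation with $b\cdot E$ for the converse; you phrase this via Lemma~\ref{corollary1}, which is exactly the consequence of equation~(1) being used, and you are somewhat more explicit than the paper in justifying why control of $\omega_{\psi_a}$ forces control of $\pi_{\psi_a}$ (via the subquotient argument and exactness of the twisted Jacquet functor).
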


\begin{proof} From equation $(1)$,
$$(n\cdot f)(x) = \psi((q \otimes q_n)x)f(x), $$
with $x \in E$, $(q \otimes q_n)(x)= 
n^2 {\N}m(x)$, so the first part of the proposition follows. For the second part of the proposition, observe that by the first part of the proposition, 
if a representation of $\SL_2(F)$ is obtained as a theta lift of a representation of $\OO(b\cdot E)$, then it has Whittaker model only 
for characters of the form $\psi_{bc}(x) = \psi(bcx)$ for some $c \in \Nm E^\times$. Since it is given that $\pi$ has a Whittaker model for 
$a \in \Nm E^\times$, it follows that $b \in \Nm E^\times$. Since $b \in \Nm E^\times$, it follows that $b\cdot E \cong E$ as quadratic spaces, and
hence $\pi$ is indeed obtained as theta lift from $\OO(E)$ as desired. 
\end{proof}

\begin{proposition}\label{prop11}
For an irreducible admissible representation $\pi$ of $\SL_2(F)$, the following are equivalent:

\begin{enumerate}
\item $\pi$ has a Whittaker model for a character $\psi_a(x) = \psi(ax)$ for some $a \in {\mathbb N}m(E^\times)$.  

\item If $\pi$  is obtained as a theta lift of a representation of $\OO(2) = \OO(b \cdot E)$ for some $b\in F^\times$, 
it is obtained as a theta lift from $\OO(E)$; equivalently, $b \in \N E^\times$ so that $b \cdot E \cong E$ as quadratic spaces.
\end{enumerate}
\end{proposition}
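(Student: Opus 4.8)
The plan is to read Proposition~\ref{prop11} as a clean repackaging of the preceding Proposition, proving the two implications separately; the only ingredient not already contained there is the genericity of a theta lift from $\OO(E)$ to $\SL_2(F)$, which is where I expect the real work.

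For $(1)\Rightarrow(2)$, which is essentially the converse half of the preceding Proposition, I would argue as follows. Suppose $\pi$ has a Whittaker model for $\psi_a$ with $a\in\Nm(E^\times)$ and is realized as a theta lift from $\OO(b\cdot E)$ for some $b\in F^\times$. In the Weil representation of $(\OO(b\cdot E),\SL_2(F))$, equation~$(1)$ makes the unipotent radical $N$ of the Borel of $\SL_2(F)$ act through the quadratic form $q\otimes q_n=b\,n^2\,\Nm(\cdot)$, so by Lemma~\ref{corollary1} the twisted Jacquet module of the Weil representation for a nontrivial character $\psi_c$ of $N$ is nonzero only when $c\in b\cdot\Nm(E^\times)$; since the $\psi_c$-twisted Jacquet functor is exact and $\pi$ is an $\SL_2(F)$-subquotient of this Weil representation, $\pi$ carries a $\psi_c$-Whittaker functional only for such $c$. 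Then $a$ lies in both $b\cdot\Nm(E^\times)$ and $\Nm(E^\times)$, and as $\Nm(E^\times)$ is a subgroup of $F^\times$ this forces $b\in\Nm(E^\times)$, i.e.\ $b\cdot E\cong E$ (recall this equivalence from the Notation section), so $\pi$ is a theta lift from $\OO(E)$.

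For $(2)\Rightarrow(1)$, write $\pi=\theta(\sigma)$ with $\sigma$ an irreducible representation of the compact group $\OO(E)$. By Lemma~\ref{corollary1}, a $\psi_a$-Whittaker functional on the Weil representation of $(\OO(E),\SL_2(F))$, hence on the subquotient $\pi$, can exist only when $\langle a\rangle$ embeds in $E$, i.e.\ $a\in\Nm(E^\times)$; so all that remains is to exhibit at least one such functional on $\pi$, i.e.\ to see that $\pi$ is generic. My plan here: for $a\in\Nm(E^\times)$, Proposition~\ref{prop2} gives the $\psi_a$-twisted Jacquet module of the Weil representation as ${\rm ind}_{\OO(\langle a\rangle^\perp)}^{\OO(E)}\C$ as an $\OO(E)$-module, so exactness of the twisted Jacquet functor (which commutes with $\OO(E)$), Frobenius reciprocity, and compactness of $\OO(E)$ identify the $\psi_a$-twisted Jacquet module of $\pi$ with the multiplicity of the trivial character of $\OO(\langle a\rangle^\perp)\cong\{\pm1\}$ in $\sigma$. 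One then checks that for every $\sigma$ with $\Theta(\sigma)\neq0$ some $a\in\Nm(E^\times)$ makes this multiplicity positive: this is a finite verification over the isomorphism classes of irreducible representations of $\OO(E)$, the two-dimensional ones ${\rm Ind}_{\SO(E)}^{\OO(E)}\chi$ (with $\chi^2\neq1$) having $\sigma(r)$ of trace $0$ on every reflection $r$, a nontrivial one-dimensional $\sigma$ being unable to be trivial on all the reflections $r_a$, $a\in\Nm(E^\times)$ (these generate $\OO(E)$, so inspecting the two reflection conjugacy classes pins down a good $a$), and the sign character $\det$ of $\OO(E)$ being irrelevant since $\Theta(\det)=0$.

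The main obstacle is precisely this genericity step in $(2)\Rightarrow(1)$ — showing a theta lift from $\OO(E)$ to $\SL_2(F)$ always carries some Whittaker functional, i.e.\ handling the branching computation above together with the vanishing $\Theta(\det_{\OO(E)})=0$. Everything else is bookkeeping around the preceding Proposition and the identity $b\cdot E\cong E\iff b\in\Nm(E^\times)$; in particular, if one takes ``$\pi$ obtained as a theta lift'' to tacitly mean that $\pi$ is an irreducible infinite-dimensional constituent so arising, genericity is automatic and the proof collapses to a transcription of the preceding Proposition.
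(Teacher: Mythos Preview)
Your $(1)\Rightarrow(2)$ is fine and matches the paper: it is exactly the converse half of the preceding Proposition.

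Your $(2)\Rightarrow(1)$ has a genuine gap. Condition~(2) is an implication, so it is \emph{vacuously} satisfied whenever $\pi$ is \emph{not} a theta lift from any $\OO(b\cdot E)$. Your argument begins ``write $\pi=\theta(\sigma)$ with $\sigma$ an irreducible representation of $\OO(E)$'' and thereby treats only the case where $\pi$ \emph{is} such a lift. You never address the bulk of representations of $\SL_2(F)$ --- those not dihedral with respect to $E$ --- for which (2) holds trivially but (1) still needs proof. This is precisely where the content lies, and it is what the paper's proof spends most of its effort on: a case-by-case analysis according to how the ambient $\GL_2(F)$-representation $\tilde\pi$ restricts to $\SL_2(F)$. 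When $\tilde\pi|_{\SL_2(F)}$ is irreducible, $\pi$ has all Whittaker models and (1) is immediate. When $\tilde\pi$ splits into two or four pieces but is not dihedral with respect to $E$, one must show that the coset in $F^\times/\Nm(L^\times)$ indexing the Whittaker characters of $\pi$ (where $L$ is the compositum of the quadratic fields $M$ with $\tilde\pi$ dihedral for $M$) necessarily meets $\Nm(E^\times)$; this is a class-field-theory argument ($\Nm L^\times\subset\Nm E^\times\iff E\subset L$). Your proposal contains nothing in this direction.

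So the ``main obstacle'' is not the genericity of lifts from $\OO(E)$ --- that case is already absorbed by the preceding Proposition --- but rather proving (1) for representations that are not dihedral with respect to $E$ at all.
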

\begin{proof}We give a proof by a  case-by-case analysis.

\begin{enumerate}
\item The representation $\pi$ is contained in an irreducible representation $\tilde{\pi}$ of $\GL_2(F)$ which remains irreducible when restricted to $\SL_2(F)$, 
i.e.,  $\tilde{\pi}|_{ \SL_2(F)} = \pi$.  
In this case, $\pi$ and $\tilde{\pi}$ have Whittaker model for {\it all} (non-trivial) characters of $F$, so nothing to be done in this case, i.e., 
$(1)$ is true, and $(2)$ is vacuously true.

\item The representation $\pi$ is contained in an irreducible representation $\tilde{\pi}$ of $\GL_2(F)$ which decomposes into 2 or 4 components when restricted
to $\SL_2(F)$, but $\tilde{\pi}$ does not arise from a character of $E^\times$. Let $L$ be the compositum of all quadratic extensions $M$ of $F$ such that
$\tilde{\pi}$ is a dihedral representation corresponding to a character of $M^\times$.
Then $L$ is either a  quadratic or bi-quadratic 
extension of $F$ such that $\pi$ has Whittaker model exactly for those characters of the form $\psi_a(x)=\psi(ax)$ for $a$ belonging to a fixed 
coset of $F^\times/{\N}m(L^\times)$.
It is easy to see that since $E$ is not contained in $L$, such a coset must intersect ${\N}m(E^\times)$, i.e., the map:
$$\Nm(E^\times) \longrightarrow  F^\times/ {\N}m L^\times,$$
must be surjective, i.e., 
$F^\times = {\N}m E^\times \cdot {\N}m L^\times$. 
But ${\N}m E^\times,$ is a subgroup of $F^\times$ of index 2, therefore $F^\times = {\N}m E^\times \cdot {\N}m L^\times$ if and only if,
$$\Nm L^\times \not \subset  {\N}m E^\times.$$
But by classfield theory, 
  $$\Nm L^\times  \subset  {\N}m E^\times \Longleftrightarrow E^\times \subset L^\times.$$
In this case by hypothesis, 
$E^\times \not \subset L^\times$, so 
the map:
$\Nm(E^\times) \longrightarrow  F^\times/ {\N}m L^\times$ is surjective.

It follows that in this case $\pi$ always has a Whittaker model for a character $\psi_a(x) = \psi(ax)$ for some $a \in {\mathbb N}m(E^\times)$, and $(2)$ is vacuously satisfied (since in this case $\pi$ is not obtained as a theta lift from $\OO(2)$).

\item  The representation 
$\pi$ 
is obtained as a  theta lift from
$\OO(b\cdot E)$ for {\it some} $b \in F^\times$. In this case, the conclusion is part of the previous proposition.
\end{enumerate}\end{proof}

\begin{theorem}
 An irreducible admissible representation of $\SO(X + E + X^\vee) = \SO(4,2)$ is distinguished by $\SO(3,2)$ if and only if it is
obtained as a theta lift of a representation $\pi$ of  $\SL_2(F)$ which has either of 
the following equivalent properties:

\begin{enumerate}
\item $\pi$ has a Whittaker model for a character $\psi_a(x) = \psi(ax)$ for $a \in {\mathbb N}m(E^\times)$.  

\item If $\pi$  is obtained as a theta lift of a representation of $\OO(2) = \OO(b \cdot E)$ for some $b\in F^\times$, 
it is obtained as a theta lift from $\OO(E)$.
\end{enumerate}

\end{theorem}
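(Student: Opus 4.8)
The plan is to derive the theorem from Corollary~\ref{cor5}, Proposition~\ref{prop2}, and Proposition~\ref{prop11}, together with the bookkeeping at the start of this section. Recall from there that for $a\in F^\times$ the complement $\langle a\rangle^\perp\subset V=X+E+X^\vee$ is a split quadratic space --- so that $\SO(\langle a\rangle^\perp)$ is a copy of $\SO(3,2)$ inside $\SO(4,2)$ --- exactly when $a\in\Nm(E^\times)$, and is a rank-one form $\SO(4,1)$ otherwise. Accordingly I read ``$\mu$ distinguished by $\SO(3,2)$'' as ``$\mu$ distinguished by $\SO(\langle a\rangle^\perp)$ for some $a\in\Nm(E^\times)$'', the various such subgroups being interchanged by similitudes of $V$. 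Since the equivalence of conditions (1) and (2) is precisely Proposition~\ref{prop11}, the remaining task is to prove that $\mu$ is distinguished by $\SO(\langle a\rangle^\perp)$ for some $a\in\Nm(E^\times)$ if and only if $\mu\cong\theta(\pi)$ for an irreducible admissible representation $\pi$ of $\SL_2(F)$ enjoying property (1).

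The ``if'' direction is immediate from the converse half of Corollary~\ref{cor5} (applied to $V$, $\dim V=6$): if $\mu=\theta(\pi)$ and $\pi$ carries a $\psi_a$-Whittaker model with $a\in\Nm(E^\times)$, then $\mu$ is distinguished by $\SO(\langle a\rangle^\perp)$, and that group is $\SO(3,2)$ because $a$ is a norm. For the ``only if'' direction, assume $\mu$ is distinguished by $\SO(\langle a_0\rangle^\perp)$ with $a_0\in\Nm(E^\times)$. First I would show $\mu$ is itself a theta lift from $\SL_2(F)$: taking $\Sp(W)=\SL_2(F)$, $\dim X=1$, and embedding the line $\langle a_0\rangle$ in $V$ by a map realizing the value $a_0$, Proposition~\ref{prop2} identifies the $(N,\psi_{a_0})$-twisted Jacquet module of the Weil representation $\omega$ of $(\OO(V),\SL_2(F))$ with ${\rm ind}^{\OO(V)}_{\OO(\langle a_0\rangle^\perp)}\C$; this being an $\OO(V)$-quotient of $\omega$, distinction of $\mu$ by $\SO(\langle a_0\rangle^\perp)$ produces via Frobenius reciprocity a nonzero $\OO(V)$-map from $\omega$ onto the contragredient of $\mu$, so $\mu$ participates in theta with $\SL_2(F)$ and, by Howe duality ([GT], the passage between $\OO(V)$ and $\SO(V)$ being licensed by the conservation-relation remark preceding Corollary~\ref{cor5}), $\mu=\theta(\pi)$ with $\pi:=\theta_{\SL_2}(\mu)$. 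On the other hand, the first half of Corollary~\ref{cor5} shows $\Theta_{\SL_2}(\mu)$ carries a $\psi_{a_0}$-Whittaker model.

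The remaining, and main, point is to descend the $\psi_{a_0}$-Whittaker model from the big lift $\Theta_{\SL_2}(\mu)$ to the small lift $\pi=\theta_{\SL_2}(\mu)$, i.e.\ to verify property (1) for $\pi$. If $\theta_{\SL_2}(\mu)$ is cuspidal or an irreducible principal series then $\Theta_{\SL_2}(\mu)=\pi$ (all constituents share one cuspidal support), so $\pi$ has a $\psi_{a_0}$-model and (1) holds with $a=a_0$. The delicate case is when $\theta_{\SL_2}(\mu)$ is a component of a reducible principal series, hence dihedral, say a theta lift from $\OO(b\cdot E')$ for a quadratic algebra $E'$ and $b\in F^\times$, with every constituent of $\Theta_{\SL_2}(\mu)$ in this same dihedral family. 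When $E'\neq E$, the computation in the proof of Proposition~\ref{prop11} (case~2) shows $\Nm(E^\times)\to F^\times/\Nm((E')^\times)$ is surjective, so $\pi$ already has a $\psi_a$-model for some $a\in\Nm(E^\times)$. When $E'=E$, the first Proposition of this section pins the $\psi$-Whittaker parameters of the whole family attached to $\OO(b\cdot E)$ to the coset $b\cdot\Nm(E^\times)$; since $\Theta_{\SL_2}(\mu)$ is $\psi_{a_0}$-generic with $a_0$ a norm, $b$ must be a norm, $b\cdot E\cong E$ as quadratic spaces, $\pi$ is a theta lift from $\OO(E)$, and again (1) holds. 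The one subtlety still to be checked in this last case is that the $\psi_{a_0}$-generic member of $\Theta_{\SL_2}(\mu)$ is the irreducible quotient $\pi$ rather than its complementary constituent; this is forced by the explicit first-occurrence behaviour of theta lifts of characters of $\OO_2$ recorded in the first Proposition of this section, and it is the only genuinely non-formal step of the argument. With (1) established for $\pi$, the ``only if'' direction, and with it the theorem, follows.
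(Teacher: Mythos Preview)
Your overall architecture matches the paper's proof exactly: invoke Corollary~\ref{cor5} for both directions of the distinction $\Leftrightarrow$ theta-lift-with-Whittaker equivalence, and cite Proposition~\ref{prop11} for the equivalence of (1) and (2). The paper's proof is in fact just these two citations plus the stable-range remark ensuring every $\pi$ on $\SL_2(F)$ lifts to $\SO(4,2)$.

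Where you go beyond the paper is in taking seriously the $\Theta(\mu)$ versus $\theta(\mu)$ discrepancy flagged in the remark following Corollary~\ref{cor5}. The paper's own proof simply asserts that Corollary~\ref{cor5} gives the full ``if and only if'', but as you correctly observe, that corollary only guarantees a $\psi_{a_0}$-Whittaker model on the \emph{big} lift $\Theta(\mu)$, whereas the theorem requires it on $\pi=\theta(\mu)$. Your case analysis (cuspidal/irreducible principal series versus reducible principal series, and within the latter $E'\neq E$ versus $E'=E$) is the right way to close this, and the first two cases are fine.

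The genuine residual gap is exactly the one you flag: in the $E'=E$ case you must show that the $\psi_{a_0}$-generic constituent of $\Theta(\mu)$ is the unique irreducible \emph{quotient} $\pi$, not some other subquotient. Your appeal to ``the explicit first-occurrence behaviour \ldots\ recorded in the first Proposition of this section'' is not sufficient as stated: that proposition constrains Whittaker supports of lifts from $\OO(b\cdot E)$ to a coset of $\Nm(E^\times)$, but it does not by itself tell you which constituent of $\Theta(\mu)$ sits as the quotient. To finish, one needs either an explicit computation of $\Theta(\mu)$ in this narrow case (the relevant $\mu$'s are theta lifts of constituents of the principal series ${\rm Ind}\,\omega_{E/F}$, and one can track the filtration), or an argument via the tower/conservation relations identifying $\theta(\mu)$ directly as a lift from $\OO(E)$ rather than $\OO(b\cdot E)$ with $b\notin\Nm(E^\times)$. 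In short: your proof is more scrupulous than the paper's, and the one step you yourself label ``non-formal'' is indeed the only place where more needs to be said.
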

\begin{proof} By Corollary \ref{cor5}, we already know that an irreducible admissible representation of $\SO(X + E + X^\vee) = \SO(4,2)$ is 
distinguished by $\SO(3,2)$ if and only if it is
obtained as a theta lift of a representation $\pi$ of  $\SL_2(F)$ which 
has a Whittaker model for a character $\psi_a(x) = \psi(ax)$ for some $a \in {\mathbb N}m(E^\times)$.  
(Observe that by the theorem on `stable range', since the split rank of $\SO(4,2)$ is 2, every irreducible admissible representation
of $\SL_2(F)$ has a nonzero theta lift to $\SO(4,2)$.)

Equivalence of $(1)$ and $(2)$ is the content of the previous proposition. \end{proof} 

\begin{corollary}
 An irreducible admissible supercuspidal representation of $\SO(X + E + X^\vee) = \SO(4,2)$ cannot be  distinguished by $\SO(3,2)$. A supercuspidal representation of $\SO(X + E + X^\vee) = \SO(4,2)$  which is obtained 
as a theta lift from $\SL_2(F)$ is distinguished  by $\SO(4,1)$.
\end{corollary}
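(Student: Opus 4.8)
The plan is to combine the preceding Theorem with Rallis's tower property and the conservation relations of [SZ] so as to determine exactly which representations $\pi$ of $\SL_2(F)$ can theta lift to a supercuspidal representation of $\SO(4,2)$, and then to read off both assertions. So suppose $\sigma$ is an irreducible supercuspidal representation of $\SO(V) = \SO(4,2)$ with $V = X + E + X^\vee$, and that $\sigma$ is obtained as a theta lift from $\SL_2(F)$; by Howe duality ([GT]) I would write $\sigma = \theta(\pi)$ for a unique irreducible $\pi$ of $\SL_2(F)$, which is again supercuspidal (a non-supercuspidal $\pi$ would, by Kudla's induction principle, force $\theta_V(\pi)$ to be a subquotient of a properly induced representation), and in particular generic. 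For the first assertion nothing extra need be arranged: by the preceding Theorem an $\SO(3,2)$-distinguished representation of $\SO(4,2)$ is automatically of this form, so it suffices to show that such a $\pi$ admits no $\psi_a$-Whittaker model with $a \in \Nm(E^\times)$.

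The next step is the tower argument. The space $V = X + E + X^\vee$ is the $6$-dimensional member of the Witt tower $\mathcal V^{+}$ of even quadratic spaces of discriminant $d$ whose anisotropic kernel is $(E, \Nm)$. Since $\theta(\pi) = \sigma \neq 0$ is supercuspidal, Rallis's tower property forces $V$ to be the \emph{first} occurrence of $\pi$ in $\mathcal V^{+}$: if $\pi$ already lifted to the $4$- or $2$-dimensional member of $\mathcal V^{+}$, the lift $\theta_V(\pi)$ would sit strictly above first occurrence and hence would have a nonzero Jacquet module, contradicting supercuspidality of $\sigma$. Thus the first-occurrence index is $m^{+}(\pi) = 6$. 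Now I would invoke the conservation relation of [SZ] for the dual pair $(\SL_2, \OO)$: for the two Witt towers $\mathcal V^{\pm}$ of even quadratic spaces of discriminant $d$ one has $m^{+}(\pi) + m^{-}(\pi) = 8$, whence $m^{-}(\pi) = 2$. Consequently $\pi$ is a theta lift from $\OO(b \cdot E)$, the $2$-dimensional member of $\mathcal V^{-}$, where $b \cdot E$ is not isometric to $E$, i.e. $b \notin \Nm(E^\times)$.

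It remains to read off the two conclusions. Because $\pi$ is a theta lift from $\OO(b \cdot E)$ with $b \notin \Nm(E^\times)$, condition $(2)$ of Proposition \ref{prop11} fails for $\pi$, and hence so does the equivalent condition $(1)$: $\pi$ has no $\psi_a$-Whittaker model with $a \in \Nm(E^\times)$. By the preceding Theorem, $\sigma$ is therefore not distinguished by $\SO(3,2)$; this proves the first assertion. For the second, $\pi$ is generic, so it has a $\psi_a$-Whittaker model for some $a \in F^\times$, and by the previous sentence any such $a$ lies outside $\Nm(E^\times)$. Fixing such an $a$ and embedding $\langle a \rangle$ as a non-degenerate line in $V$ (possible since $V$ is isotropic and thus represents all of $F^\times$), Corollary \ref{cor5} shows that $\sigma = \theta(\pi)$ is distinguished by $\SO(\langle a \rangle^{\perp})$; and, as recalled at the start of this section, $a \notin \Nm(E^\times)$ forces $\langle a \rangle^{\perp}$ to be the non-split quasi-split form, so $\SO(\langle a \rangle^{\perp}) = \SO(4,1)$. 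This gives the second assertion.

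The step I expect to be the crux is the conservation-relation input, more precisely the bookkeeping of its normalization: it is exactly the fact that $m^{+}(\pi) = 6$ forces $m^{-}(\pi) = 2$ --- rather than $4$ --- that identifies $\pi$ as dihedral with respect to $b \cdot E$, hence Whittaker-generic precisely outside $\Nm(E^\times)$; with that settled, the remaining steps are direct applications of the preceding Theorem, Proposition \ref{prop11}, and Corollary \ref{cor5}. One should also note in passing that $\pi$ is automatically infinite-dimensional, so that these Whittaker models are meaningful, since the only finite-dimensional irreducible representation of $\SL_2(F)$ is the trivial one, whose theta lift to $\SO(4,2)$ is not supercuspidal.
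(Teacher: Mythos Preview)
Your argument is correct and uses the same ingredients as the paper's proof (Rallis's tower property, the conservation relation $m^{+}(\pi)+m^{-}(\pi)=8$, Proposition~\ref{prop11}, and Corollary~\ref{cor5}), merely organized contrapositively for the first assertion: you go directly from supercuspidality of $\sigma$ to $m^{+}(\pi)=6$, hence $m^{-}(\pi)=2$, hence $\pi$ dihedral from $\OO(b\cdot E)$ with $b\notin\Nm E^\times$, whereas the paper starts from the Whittaker condition and case-splits on whether $\pi$ is dihedral with respect to $E$ or not dihedral at all. One harmless caveat: your parenthetical that $\pi$ is itself supercuspidal is not fully justified by Kudla's induction principle alone (and can in fact fail, e.g.\ for a constituent of $\Ind_B^{\SL_2}\omega_{E/F}$ arising as a theta lift from $\OO(b\cdot E)$), but it is not needed anywhere --- as you correctly note at the end, infinite-dimensionality of $\pi$ already yields the genericity you use.
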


\begin{proof}
To prove the corollary it suffices to note that by Theorem 9.1, a  supercuspidal representation of $\SO(4,2)$ distinguished by $\SO(3,2)$ must  
be obtained as a theta lift of a 
representation of $\SL_2(F)$ 
which has a Whittaker model for the character $\psi_a(x)=\psi(ax)$ for some $a \in {\mathbb N}m(E^\times)$. 

By Proposition 3,  
a representation $\pi$ of $\SL_2(F)$ 
which has a Whittaker model for the character $\psi_a(x)=\psi(ax)$ for some $a \in {\mathbb N}m(E^\times)$ is either

\begin{enumerate}
\item obtained as a theta lift from
$\OO(2) = \OO(E)$, and therefore by the Kudla's theory of towers of theta lifts, the theta lift of such a representation of $\SL_2(F)$ to $\OO(X + E + X^\vee)= \OO(4,2)$ cannot be supercuspidal, or

\item the representation $\pi$ is not obtained as a theta lift from $\OO(bE)$ for any $b \in F^\times$. 
In this case, the first
occurrence of $\pi$ in the tower $\OO(V_{b,r}) = \OO(Y_r+ bE+ Y_r^\vee)$, where $Y_r$ has dimension $r$ and $b \in F^\times$, and hence $V_{b,r}$ 
has dimension $2 + 2r$, has $\dim(V_{b,r}) \geq 4$ for any $b \in F^\times$. Since the sum of the first occurrences in the two towers is 8 by the `conservation relations', $\pi$ lifts
to both the towers for $\dim (V_{b,r}) = 4$, in particular $\pi$ lifts to $\OO( Y_r + E +  Y_r^\vee)$ for $\dim Y_r =1$, i.e., to $\OO(3,1)$. Again, the lift of $\pi$ to $\OO(4,2)$ cannot be supercuspidal.
\end{enumerate}
    
For the second assertion contained in the corollary regarding distinction  by $\SO(4,1)$, note that by the 
previous analysis, the only supercuspidal representation of $\SO(4,2) $  which is obtained as a 
theta lift from a representation $\pi$ of $\SL_2(F)$ has the property that $\pi$ is obtained as a theta lift 
from $\OO(b\cdot E)$ for $b \in F^\times - \Nm E^\times$. By the conservation relations, theta lift of such representations
to $\SO(4,2)$ are indeed supercuspidal (being the first occurrence), and by Corollary \ref{cor5}, these representations of 
$\SO(4,2)$ are distinguished by $\SO(W)$, where $W$ is the orthogonal complement of 
$b\cdot E$ inside the quadratic space $X+ E  + X^\vee$ with $\dim X =2$ (it is easily seen that 
$b\cdot E$ is contained in the quadratic space $X+ E  + X^\vee$).
Such a $W$ can be seen to be the unique non-split quadratic space of dimension 5 with trivial discriminant, thus $\SO(W)=\SO(4,1)$.
\end{proof}

We will not go into any details of the corresponding global theorem except to state the following theorem which is a simple consequence of  Theorem 11 of [PT].

\begin{theorem}
For a  cuspidal automorphic representation $\pi$ of  $\SL_2(\A_k)$ which 
has a Whittaker model for a character $\psi_{0,a}(x) = \psi_0(ax)$ for $a \in {\mathbb N}m(K^\times)$, its theta lift $\Theta(\pi)$
to $\SO(X + E + X^\vee) = \SO(4,2)(\A_k)$ 
 has convergent, and nonzero period integral on  $\SO(3,2)(k)\backslash \SO(3,2)(\A_k)$.  Conversely, if a 
cuspidal 
automorphic representation of $\SO(X + E + X^\vee) = \SO(4,2)$ has nonzero period integral on  $\SO(3,2)(k)\backslash \SO(3,2)(\A_k)$, 
it is
obtained as a theta lift of a cuspidal automorphic representation $\pi$ of  $\SL_2(\A_k)$ which
 has a Whittaker model for a character $\psi_{0,a}(x) = \psi_0(ax)$ for $a \in {\mathbb N}m(K^\times)$. 
\end{theorem}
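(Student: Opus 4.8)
The plan is to deduce the theorem from the global unfolding of the theta integral --- the automorphic counterpart of Corollary \ref{cor5} --- which is exactly what Theorem 11 of [PT] carries out. Fix the nontrivial additive character $\psi_0:\A_k/k\to\C^\times$ and realize the global Weil representation $\omega$ of $\SL_2(\A_k)\times\OO(X+E+X^\vee)(\A_k)$ on $\Sc((X+E+X^\vee)(\A_k))$; since the Lagrangian $X^\vee$ in the $2$-dimensional symplectic space is a line, $V\otimes X^\vee\cong V$ and this is just the Schwartz--Bruhat space of $V(\A_k)$. For a cusp form $f$ in $\pi$ and a Schwartz--Bruhat function $\varphi$, form the theta lift $\Theta(f,\varphi)(h)=\int_{\SL_2(k)\backslash\SL_2(\A_k)}\theta(g,h;\varphi)\,f(g)\,dg$, where $\theta(g,h;\varphi)=\sum_{\xi\in V(k)}(\omega(g,h)\varphi)(\xi)$; this converges absolutely since $f$ is rapidly decreasing, $\Theta(f,\varphi)$ is a slowly increasing automorphic form on $\OO(X+E+X^\vee)(\A_k)$, and such vectors span $\Theta(\pi)$.

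Next I would compute the period of $\Theta(f,\varphi)$ over $\SO(3,2)(k)\backslash\SO(3,2)(\A_k)$. Choose an anisotropic $v_a\in V(k)$ with $q(v_a)=a$ and put $W'=\langle v_a\rangle^\perp$, a quadratic space of dimension $5$ which is split precisely when $a\in\Nm(K^\times)$, in which case $\SO(W')=\SO(3,2)$ --- the dictionary already recorded in Section 9. Writing $V=\langle v_a\rangle\perp W'$ and $\varphi=\varphi_1\otimes\varphi_2$, the theta kernel factors as $\theta(g,h;\varphi)=\theta_1(g;\varphi_1)\,\theta_2(g,h;\varphi_2)$, with $\theta_1$ the theta function of the line $\langle v_a\rangle$ and $\theta_2$ that of the dual pair $(\SO(W'),\SL_2)$, both genuine on the metaplectic cover $\overline{\SL}_2(\A_k)$ but with product descending to $\SL_2(\A_k)$. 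The crucial point, which is the substance of the unfolding in [PT], is that $\int_{\SO(W')(k)\backslash\SO(W')(\A_k)}\theta_2(g,h;\varphi_2)\,dh$ is \emph{absolutely} convergent --- since $\dim W'=5$ lies in Weil's convergent range for $(\SO(W'),\SL_2)$ --- and, by the Siegel--Weil formula, equals a value of a Borel Eisenstein series $E_{\varphi_2}(g)$ on $\overline{\SL}_2(\A_k)$, hence is slowly increasing in $g$. This absolute convergence together with the rapid decay of $f$ justifies interchanging the $\SL_2$- and $\SO(W')$-integrations, so the $\SO(3,2)$-period of $\Theta(f,\varphi)$ equals $\int_{\SL_2(k)\backslash\SL_2(\A_k)}f(g)\,\theta_1(g;\varphi_1)\,E_{\varphi_2}(g)\,dg$. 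Unfolding $E_{\varphi_2}$ and using the transformation property $(\omega(ug)\varphi_1)(tv_a)=\psi_0(b\,t^2a)\,(\omega(g)\varphi_1)(tv_a)$ of the Weil representation, where $u$ is the element of the standard unipotent $N\subset\SL_2$ with upper-triangular entry $b$, this last integral unwinds into a sum over $t\in k^\times$ of terms built from the Fourier coefficients $W_{f,\psi_{0,t^2a}}$ of $f$; since $\psi_{0,t^2a}$ and $\psi_{0,a}$ are conjugate Whittaker characters, the entire expression is nonzero for suitable $\varphi,f$ if and only if $\pi$ is $\psi_{0,a}$-generic. This proves convergence of the period and the first assertion.

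For the converse I would run this backwards: if a cuspidal automorphic representation $\sigma$ of $\SO(4,2)(\A_k)$ has nonzero $\SO(3,2)$-period, pairing a cusp form in $\sigma$ against the theta kernel and invoking the period shows that the theta lift of $\sigma$ to $\SL_2(\A_k)$ is nonzero and carries a nonzero $\psi_{0,a}$-Whittaker functional; choosing a cuspidal constituent $\pi$ of this lift (which exists by cuspidality of $\sigma$) and recovering $\sigma$ inside the iterated theta lift, one obtains $\sigma$ as the theta lift of the $\psi_{0,a}$-generic cuspidal $\pi$.

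The step I expect to be the main obstacle is exactly the convergence of the $\SO(3,2)$-period and the interchange of integrations in the unfolding: because $\Theta(\pi)$ on $\SO(4,2)(\A_k)$ sits beyond its first occurrence in the Witt tower it is not cuspidal, so rapid decrease is unavailable on the orthogonal side, and one must instead control the inner $\SO(W')$-integral through the convergent-range Siegel--Weil formula --- or, alternatively, interpose an Eisenstein series on $\SL_2$ as in the proof of Proposition \ref{prop8}, following [AGR] --- before exploiting the rapid decay of the $\SL_2$-cusp form $f$.
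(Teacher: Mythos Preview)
Your argument is correct and complete, but it is not quite the route the paper (via Theorem 11 of [PT]) takes, despite your opening sentence identifying the two.

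The unfolding in [PT] --- whose proof the authors in fact reproduce in draft form after the bibliography --- does \emph{not} factor the theta kernel along $V=\langle v_a\rangle\perp W'$ and invoke the Siegel--Weil formula. Instead it computes the $\psi_a$-Fourier coefficient of the theta lift along $N\subset\SL_2$ directly: one integrates the theta kernel over $N(k)\backslash N(\A)$, observes that only those lattice points $x\in V(k)$ survive for which $q(x)=a$, notes by Witt's theorem that these form a single $\OO(V)(k)$-orbit with stabilizer $\OO(\langle v_a\rangle^\perp)(k)$, and then collapses the double sum/integral to obtain
\[
\theta_\varphi(f)(\psi_a)\;=\;\Big(\int_{\OO(W')(k)\backslash \OO(W')(\A_k)} f(h)\,dh\Big)\cdot\Big(\int_{\OO(W')(\A_k)\backslash \OO(V)(\A_k)} \varphi(h^{-1}v_a)\,dh\Big),
\]
the second factor being an absolutely convergent adelic orbital integral that can be made nonzero by choice of $\varphi$. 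This is entirely elementary --- no Eisenstein series, no Siegel--Weil --- and gives both implications at once. Your approach trades this orbit unfolding for a seesaw: you integrate the $W'$-theta over $[\SO(W')]$ first, use that $\dim W'=5$ with Witt index $2$ lies in Weil's convergent range for $\SL_2$, and then unfold the resulting Eisenstein series against $f\cdot\theta_1$. Both reach the same Whittaker coefficient; yours is heavier machinery but makes the convergence of the $\SO(3,2)$-period transparent (indeed, you correctly flagged this as the crux), whereas [PT]'s method sidesteps the issue by never forming that period as an outer integral.

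For the converse, your sketch is fine but glosses over two points worth a line each: that the theta lift of a cuspidal $\sigma$ on $\SO(4,2)$ down to $\SL_2$ is automatically cuspidal follows from Rallis's tower property since the lift to $\Sp_0=\{1\}$ is the pairing of $\sigma$ against the constant function, which vanishes; and that $\sigma$ reappears inside the theta lift of $\pi$ back up is the standard inner-product adjunction $\langle \theta_\varphi(f_\sigma),f_\pi\rangle_{[\SL_2]}=\langle f_\sigma,\theta_\varphi(f_\pi)\rangle_{[\OO(V)]}$.
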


Having done many explicit examples as well as some general theorems in this section, we end with the following conjecture.

\begin{conj} Let $W\subset V$ be non-degenerate quadratic spaces over any local field $F$ with $\dim(V/W)=1$. Then there are
 tempered representations of $\SO(V)$ distinguished by $\SO(W)$ if and only if rank$ (\SO(W))$, i.e. the dimension of a maximal isotropic subspace
of $W$,  is $\leq 1$.
\end{conj}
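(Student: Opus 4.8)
The plan is to reduce the general conjecture to the kind of theta-correspondence analysis carried out in this section for the special case $(\SO(4,2),\SO(3,2))$, while isolating the direction that is genuinely nontrivial. The ``only if'' direction — that if $\mathrm{rank}(\SO(W))\geq 2$ then no tempered representation of $\SO(V)$ is distinguished by $\SO(W)$ — should be the formal part. First I would run the Clifford-theory machine of Section 3 (or rather its adaptation to the pair $(\SO(V),\SO(W))$ via the Klingen mirabolic of $\SO(W)$, exactly as Proposition \ref{prop3} and Corollary \ref{corollary2} do for $\U(n,n)$): a representation distinguished by $\SO(W)$ is \emph{a fortiori} distinguished by the Klingen mirabolic of $\SO(W)$, and the inductive argument produces a nonzero twisted-Jacquet module $\pi_{U,\psi}$ for a degenerate character $\psi$ of the unipotent radical of a parabolic of $\SO(V)$ whose Levi contains a $\GL$-factor of size equal to $\mathrm{rank}(\SO(W))$. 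When that rank is $\geq 2$, this forces $\pi$ to be a subquotient of a representation parabolically induced from a proper parabolic with a $\GL_m$-factor ($m\geq 2$) carrying a degenerate (non-generic) Whittaker datum on that factor; a tempered representation cannot have such a degenerate Jacquet module along a $\GL_m$ with $m\geq 2$ unless the inducing data is itself tempered and generic, and chasing the exponents/Casselman criterion rules this out. The point is that the ``missing middle root'' phenomenon of Corollary \ref{corollary2} persists, and a tempered representation is too close to the discrete series to support it once $m\geq 2$.

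For the ``if'' direction — that when $\mathrm{rank}(\SO(W))\leq 1$ there \emph{do} exist tempered representations of $\SO(V)$ distinguished by $\SO(W)$ — I would argue by theta correspondence with $\SL_2(F)$, generalizing Corollary \ref{cor5} and the analysis of Section 9. Embed $W=\langle a\rangle^\perp$ for a vector $v\in V$ with $q(v)=a$; Proposition \ref{prop2} and Corollary \ref{cor5} identify representations of $\SO(V)$ distinguished by $\SO(\langle a\rangle^\perp)$ with theta lifts $\theta(\pi)$ of representations $\pi$ of $\SL_2(F)$ possessing a $\psi_a$-Whittaker model. When $\mathrm{rank}(\SO(W))\leq 1$ the quadratic space $\langle a\rangle^\perp$ is anisotropic or has Witt index $1$, so $\dim V\leq 4$ (or $V$ is in the ``small'' range where the relevant tower hits $\SO(V)$ early); by the stable-range/first-occurrence results one knows every generic tempered (indeed every discrete series) representation $\pi$ of $\SL_2(F)$ with the right Whittaker character lifts to a nonzero $\theta(\pi)$ on $\SO(V)$, and one checks that this lift is tempered. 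Concretely: take $\pi$ a supercuspidal (or a tempered principal series) representation of $\SL_2(F)$ with a $\psi_a$-model for $a=q(v)$ where $\langle v\rangle^\perp=W$; then $\theta(\pi)$ is a tempered representation of $\SO(V)$, and by Corollary \ref{cor5} it is distinguished by $\SO(W)$. This produces the desired examples in every case with $\mathrm{rank}(\SO(W))\leq 1$.

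The main obstacle I expect is controlling \emph{temperedness} on both sides of the theta correspondence across the relevant range of $\dim V$, and making the ``only if'' reduction fully rigorous for arbitrary $\dim V$ rather than just $\dim V\leq 6$. On the temperedness side, one needs that theta lift from $\SL_2(F)$ to $\SO(V)$ of a tempered representation stays tempered in the first-occurrence (``boundary'') range — this is known but requires citing the precise results on temperedness-preservation under theta (e.g. in the convergent/first-occurrence range) rather than the stable range, and care is needed because outside first occurrence the lift can fail to be tempered. On the reduction side, the Clifford-theory argument of Section 3 is stated for the Klingen \emph{mirabolic}, and to get a contradiction with temperedness (as opposed to mere cuspidality, which is what Section 3 achieves) one must push the inductive construction of Corollary \ref{corollary2} further, tracking not just non-cuspidality but the full degenerate Whittaker datum, and then invoke Casselman's square-integrability/temperedness criterion on the resulting Jacquet module. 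I would expect the bulk of the write-up to go into this last point: translating ``$\pi$ has a $\mu_n$-type linear form with the middle entry missing'' into ``$\pi$ has a non-tempered exponent,'' which is the crux of why $\mathrm{rank}\geq 2$ obstructs temperedness but $\mathrm{rank}\leq 1$ does not.
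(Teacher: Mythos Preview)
This statement is a \emph{conjecture} in the paper, not a theorem; the paper offers no proof. The remark following the conjecture assembles evidence for the ``if'' direction (existence of distinguished tempered representations when $\mathrm{rank}(\SO(W))\leq 1$) and then explicitly says: ``the non-obvious part of the conjecture above is to say that there are no distinguished tempered representations in the cases not allowed by the conjecture.'' So your difficulty assessment is inverted: you call the ``only if'' direction the formal part, whereas the authors regard it as the open problem.

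Your proposed ``only if'' argument has a genuine gap. The Clifford-theory machine of Section~3 produces a nonzero twisted Jacquet module for a \emph{degenerate} character, which yields non-cuspidality --- but it does \emph{not} yield non-temperedness. Tempered representations that are not discrete series are subquotients of parabolic inductions from tempered data and can perfectly well support the kind of degenerate Jacquet functionals that Corollary~\ref{corollary2} produces; the ``missing middle root'' phenomenon does not by itself force a non-tempered exponent. Your sentence ``chasing the exponents/Casselman criterion rules this out'' is exactly the step that is missing and that the authors do not claim to know how to do.

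Your ``if'' direction also does not go through as written. First, the dimension bound ``$\dim V\leq 4$'' is wrong: over a non-archimedean field the largest $W$ with Witt index $\leq 1$ has $\dim W=6$ (so $\dim V=7$), and over archimedean fields $\SO(n,1)$ has rank $1$ for every $n$. Second, theta lifting from $\SL_2(F)$ to $\SO(V)$ for large $V$ lands in the stable range and produces \emph{non-tempered} representations, so this cannot be the uniform mechanism. The paper's remark instead constructs the required tempered examples case by case: supercuspidals of $\SO(4,2)$ distinguished by $\SO(4,1)$ via theta (Corollary following Theorem~9.1), Varma's examples for $(\SO(5,1),\SO(4,1))$, and tempered principal series for $(\SO(n+1,1),\SO(n,1))$ and $(\SO(n,2),\SO(n,1))$ via an open-orbit analysis on $P\backslash G$. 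Even with these, the paper leaves open the pairs $(\SO(5,2),\SO(5,1))$, $(\SO(3,2),\SO(2,2))$, and $(\SO(3,2),\SO(3,1))$.
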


\begin{remark} Over a non-archimedean field $F$, there are not many quadratic pairs 
$W\subset V$, $\dim(V/W)=1$, with rank$ (\SO(W)) \leq 1$. The largest possible 
$W$ with these properties has $\dim (W) = 6$. In this paper in Corollary 6 we have constructed
supercuspidal representations of $\SO(4,2)$ distinguished by $\SO(4,1)$. In the paper [Va], Mahendra Varma 
has constructed supercuspidal representations of $\GL_2(D)$ distinguished by a rank 1 form of $\Sp_4(F)$ (denoted there as $\Sp_2(D)$), 
which can be interpreted as a representation of $\SO(5,1)$ distinguished by $\SO(4,1)$. 
For $F$ any local field, the rank 1 group $G=\SO(n+1,1)$ 
which has a minimal parabolic $P$ with Levi subgroup 
$\SO(n) \times \SO(1,1)$, containing the rank 1 subgroup $H=\SO(n,1)$ has an $H$-open orbit on $P\backslash G$
of the form $\SO(n)\backslash \SO(n,1)$. Thus there are many tempered principal series representations of $G=\SO(n+1,1)$ distinguished by $\SO(n,1)$. Similarly, for $F$ any local field, the rank 2 group $G=\SO(n,2)$ 
which has a maximal parabolic $P$ with Levi subgroup 
$\SO(n-2) \times \SO(2,2)$, containing the rank 1 subgroup $H=\SO(n,1)$ has an $H$-open orbit on $P\backslash G$
of the form $(H \cap P)\backslash H$ with the projection of $H\cap P$ onto the Levi of $P$ to be 
$\SO(n-2) \times \SO(1,1)$. Since there are many tempered representations of $\SO(2,2)$ distinguished by $\SO(1,1)$,there are many tempered principal series representations of $G=\SO(n,2)$ distinguished by $\SO(n,1)$. 

Thus the non-obvious part of the conjecture above is to say that there are no distinguished tempered representations
in the cases not allowed by the conjecture. In the non-archimedean case, there is also the question if there are distinguished supercuspidal representations for the pair:  $(\SO( 5,2),\SO(5,1))$. We are also not sure about detailed analysis for the pairs $(\SO(3,2),\SO(2,2))$ and 
$(\SO(3,2), \SO(3,1))$.
\end{remark}

\section{Interpretation via Langlands parameters}

We begin with the following most natural conjecture regarding distinction of representations of unitary groups by the symplectic group, for which 
we indicate a proof for the case of $\U(2,2)$ dealt with in this paper.

\begin{conj}For $F$ a local field,  let $\{\pi\}$ be an $L$-packet of  irreducible admissible representations of $\U(n,n)(F)$ 
which we assume to be the $L$-packet associated to an Arthur packet on $\U(n,n)(F)$. Then some member of the set $\{\pi\}$ 
 is distinguished by $\Sp_{2n}(F)$ if and only if
under basechange, the representation $BC(\pi)$ of $\GL_{2n}(E)$ is distinguished by $\Sp_{2n}(E)$.
\end{conj}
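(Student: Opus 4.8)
The plan is to prove the conjecture for $\U(2,2)$, where both sides of the equivalence become explicit: the left-hand side through the isogenies $\SU(2,2)\to\SO(4,2)$, $\Sp_4\to\SO(3,2)$ together with the classification of $\SO(3,2)$-distinguished representations of $\SO(4,2)$ obtained in the previous section, and the right-hand side through the Offen--Sayag classification of $\Sp_4(E)$-distinguished representations of $\GL_4(E)$ over the local field $E$ (cf.\ [OS1]). Since $\U_4\times_F E\cong\GL_4$, the representation $BC(\pi)$ of $\GL_4(E)$ has Langlands/Arthur parameter $\phi|_{W_E'\times\SL_2(\C)}$ (projected to $\GL_4(\C)$), where $\phi\colon W_F'\times\SL_2(\C)\to{}^L\U_4=\GL_4(\C)\rtimes\Gal(E/F)$ is the parameter of the packet $\{\pi\}$; I would prove the conjecture by showing that the two classifications translate into the same condition on $\phi$.

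For the forward implication, suppose some $\pi_0\in\{\pi\}$ is distinguished by $\Sp_4(F)$. Transported through the isogenies, the corresponding $L$-packet of $\SO(4,2)(F)$ is then the theta lift from $\SL_2(F)$ of the packet of an irreducible representation $\sigma$ carrying a $\psi_a$-Whittaker model for some $a\in\Nm(E^\times)$, and by Proposition \ref{prop11} this Whittaker condition holds precisely when $\sigma$ is not a theta lift from $\OO(b\cdot E)$ for any $b\in F^\times$ with $b\notin\Nm(E^\times)$. I would then compute $BC(\theta_F(\sigma))$: because $\SO(4,2)$ splits over $E$, one expects base change to carry $\theta_F(\sigma)$ to the theta lift of $BC(\sigma)$ from $\Sp_2(E)$ to the split group $\SO_6(E)$ --- a compatibility I would establish at the level of parameters --- and the image of that lift in $\GL_4(E)$ is, up to an explicit twist, a representation induced from Speh blocks $Sp_2(\cdot)$ of $\GL_2(E)$, i.e.\ of exactly the ``symplectic type'' in the Offen--Sayag list, hence $\Sp_4(E)$-distinguished; equivalently $\phi|_{W_E'}$ acquires the shape $\bigoplus_j\rho_j\otimes[2]$ with each $\rho_j$ two-dimensional, which is the parameter-theoretic incarnation of $\Sp_4(E)$-distinction. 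The constraint $a\in\Nm(E^\times)$ is what forces the relevant block to be a genuine $Sp_2$-block instead of being twisted away over $E$.

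For the converse, suppose $BC(\pi)$ is $\Sp_4(E)$-distinguished, so that by Offen--Sayag the parameter $\phi|_{W_E'}$ has the shape $\bigoplus_j\rho_j\otimes[2]$. Pulling this constraint back through base change forces $\phi$ to factor through the relevant dual pair, so that some member of $\{\pi\}$ is a theta lift from $\SL_2(F)$; here I would invoke the characterization of theta-lifted representations of $\SO(V)$, $\dim V\le 6$ (via poles of the pertinent $L$-function, or directly on parameters), the conservation relations of [SZ], and Kudla's tower argument, all already used in the previous section. It then remains to check that the ensuing $\sigma$ satisfies $a\in\Nm(E^\times)$: this is forced because $\phi$ is defined over $F$ with $\det\phi=\chi_E$, which makes the discriminant datum separating $\OO(E)$ from $\OO(b\cdot E)$, $b\notin\Nm(E^\times)$, trivial, and one concludes by Proposition \ref{prop11}. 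In this step one must in particular verify that the ``bad'' lifts --- the supercuspidal representations of $\U(2,2)(F)$ of Corollary 6, distinguished only by $\SO(4,1)$ --- base-change to representations of $\GL_4(E)$ that are \emph{not} of symplectic type (one expects them to be generic, in which case Heumos--Rallis forbids $\Sp_4(E)$-distinction).

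The hard part is the simultaneous, compatible bookkeeping of parameters across the three maps (theta lifting, the isogeny $\SO(4,2)\leftrightarrow\SU(2,2)$, and base change). In particular: (i) ``some member of the packet'' must be reconciled with base change, since both $\SL_2(F)$ and $\U(2,2)(F)$ have non-singleton $L$-packets and the distinguished member need not be the obvious one; (ii) the discrepancy between $\Theta$ and $\theta$ noted after Proposition \ref{prop2} has to be controlled so that the converse direction tracks the correct packet; and (iii) translating the arithmetic condition $a\in\Nm(E^\times)$ into the triviality of a discriminant/$\varepsilon$-factor on the Galois side is a genuine computation rather than a formality. This is also why the statement is restricted to packets of Arthur type --- there base change of $\U_4$ and the structure of $\phi$ are available, and the $\GL_4(E)$-side being of Arthur type forces the $\U_4$-side to be as well; for $n>1$ the corresponding explicit classifications on both sides are not yet in hand, which is why the statement is left as a conjecture.
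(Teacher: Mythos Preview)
This statement is a \emph{conjecture} in the paper, not a theorem; the paper does not claim a proof but only indicates ``conformity'' with the conjecture in the case $\U(2,2)$, and explicitly admits that details are omitted. Your proposal is likewise a sketch for $n=2$, so in scope it matches what the paper attempts.

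However, your route and the paper's are genuinely different. You propose to establish a compatibility of base change with theta correspondence (so that $BC(\theta_F(\sigma))$ becomes a theta lift over $E$), then identify the result as a Speh-type representation of $\GL_4(E)$, and finally invoke Offen--Sayag over $E$. The paper instead stays entirely on the dual side. A parameter of $\U(2,2)(F)$ is a conjugate-symplectic $\lambda: W_E' \to \GL_4(\C)$, and base change is simply ``forget the conjugate-dual structure,'' so Offen--Sayag says $BC(\pi)$ is $\Sp_4(E)$-distinguished exactly when $\lambda = \sigma \otimes \St_2$. On the other side, the isogeny $\SU(2,2)\to\SO(4,2)$ transforms parameters by $\lambda \mapsto \det(\lambda)^{-1/2}\Lambda^2(\lambda)$, and the paper computes directly that for $\lambda = \sigma \otimes \St_2$ one gets
\[
\det(\lambda)^{-1/2}\Lambda^2(\sigma\otimes\St_2)=\St_3 + (\det\sigma)^{-1}{\rm Sym}^2(\sigma),
\]
which is precisely the shape $\omega_{E/F}\sigma_\pi + \St_3$ of the parameter of a theta lift from $\SL_2(F)$ (equation~(2) of the paper). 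That $\Lambda^2$ identity is the entire argument---a few lines of linear algebra---and it sidesteps your points (i)--(iii): there is no theta-versus-base-change compatibility to prove, no $\Theta$ vs.\ $\theta$ bookkeeping, and the arithmetic condition on $a$ is absorbed into the observation that $\sigma$ is conjugate-orthogonal and hence, by Proposition~6.1 of [GGP2], descends (up to twist) to $W_F'$.

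Your approach is not wrong, but it is more circuitous and imports heavier ingredients (functoriality of theta under base change, characterization of theta lifts via $L$-function poles, conservation relations) where the paper needs only the $\Lambda^2$ computation. One small imprecision in your sketch: writing the Offen--Sayag shape as $\bigoplus_j \rho_j\otimes[2]$ with each $\rho_j$ two-dimensional misses the $\St_4$ possibility in dimension $4$; the paper's formulation $\sum_i \sigma_i\otimes\St_i$ with $\sigma_i=0$ for odd $i$ is what you want.
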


\begin{remark} Given the classification of representations of $\GL_{2n}(E)$ which are distinguished by $\Sp_{2n}(E)$ --- which we will recall below --- a consequence of the above conjecture is that there should be no tempered representations of $\U(n,n)(F)$ which are distinguished by $\Sp_{2n}(F)$. Recall that in an earlier section, 
we have proved that there are no cuspidal representations of $\U(n,n)(F)$ which are distinguished by $\Sp_{2n}(F)$. 
\end{remark}

We next recall the theorem of Offen-Sayag about symplectic periods of representations on $\GL_{2n}(F)$ in terms of Langlands parameters.

Let $W'_F = W_F \times \SL_2(\C)$ be the Weil-Deligne group of $F$. 
Let $W''_F= W'_F \times \SL_2(\C) = W_F\times \SL_2(\C) \times \SL_2(\C)$. There is a natural homomorphism
$\iota: W'_F \rightarrow W''_F = W'_F \times \SL_2(\C)$ 
in which the mapping from $W'_F$ to itself is the identity map, and the mapping from
$W'_F= W_F \times \SL_2(\C)$ to $\SL_2(\C)$ is trivial on $\SL_2(\C)$, and on $W_F$ is given by
$$w\longrightarrow \left ( \begin{array}{cc} \nu^{1/2} & 0 \\ 0 & \nu^{-1/2} \end{array} \right ),$$
where $\nu$ is the character of $W_F$ (thus factoring through $F^\times$) which is unramified, and takes
 a uniformizer in $F^\times$ to $q^{-1}$ where $q$ is the cardinality of the residue field.

The mapping $\iota: W'_F \rightarrow W''_F = W'_F \times \SL_2(\C)$ 
allows one to restrict admissible homomorphisms in $\Hom[W''_F, \GL_m(\C)]$ 
(whose restriction to $W_F$ have bounded image)
 to admissible homomorphisms in  $\Hom[W'_F, \GL_m(\C)]$ which are certain Langlands parameters
 of irreducible admissible unitary representations of $\GL_m(F)$. 
Admissible representations of $W''_F$ (whose restriction to $W_F$ have bounded image) are called Arthur
parameters, and their restriction to $W'_F$ via the mapping $\iota: W'_F \rightarrow W''_F $ is called the 
Langlands parameter associated to an Arthur parameter. (By the work of Moeglin-Waldspurger, such Langlands parameters account for all representations of $\GL_m(F)$ 
which arise in the theory of automorphic forms.)

Let $\St_n$ denote the unique irreducible $\C$-representation of $\SL_2(\C)$  of dimension $n$. 

\begin{theorem}(Offen-Sayag) \label{OS} Let $\pi$ be the irreducible admissible unitary representation of $\GL_{2n}(F)$ with Langlands
 parameter $\sigma_\pi\circ \iota: W'_F \rightarrow \GL_{2n}(\C)$ for  
an admissible representations $\sigma_\pi$ 
of $W''_F = W'_F \times \SL_2(\C)$ of dimension $2n$ written in the form:
$$\sigma_\pi = \sum_i \sigma_i \otimes \St_i,$$
where $\sigma_i$ are admissible (bounded) representations of $W'_F$. Then the representation $\pi$ has a symplectic model if and only if $\sigma_i=0$ for $i$ an odd integer.
\end{theorem}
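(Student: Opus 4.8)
The statement is the main local theorem of Offen and Sayag [OS1], so the plan is to reproduce the skeleton of their argument, using the material already assembled in this paper as the genuinely group-theoretic core. The starting point is that an irreducible admissible unitary representation $\pi$ of $\GL_{2n}(F)$ with Arthur parameter $\sigma_\pi = \sum_i \sigma_i \otimes \St_i$ is, by Tadi\'c's description of the unitary dual together with Moeglin--Waldspurger, an irreducible parabolic induction $\pi \cong \pi_1 \times \cdots \times \pi_r$ in which each $\pi_j$ is a Speh module $Sp_{i_j}(\tau_j)$ (possibly with a complementary-series twist), where $\tau_j$ is the irreducible representation of a smaller general linear group attached to $\sigma_{i_j}$, and the multiset $\{i_j\}$ is exactly the multiset of $\St$-lengths occurring in $\sigma_\pi$. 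Thus the condition ``$\sigma_i = 0$ for $i$ odd'' is precisely the condition that every block length $i_j$ is even, and the theorem splits into sufficiency and necessity of this condition.

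\emph{Sufficiency.} Assume every $i_j$ is even. Each constituent $Sp_{i_j}(\tau_j)$ is then a Speh module of even length, hence carries a symplectic period: this is the content of the Speh-module proposition established above (proved there for $\tau_j$ cuspidal, and in general by the same $\SL$/$\GL$ comparison together with the hereditary behaviour of Jacquet modules of Speh modules). It remains to show that a product of symplectically distinguished representations of even-sized general linear groups is again symplectically distinguished. I would do this via the geometric lemma of Bernstein--Zelevinsky: writing $P = MU$ for the standard parabolic of $\GL_{2n}(F)$ with Levi $M = \GL_{2n_1}(F) \times \cdots \times \GL_{2n_r}(F)$, one classifies the $\Sp_{2n}(F)$-orbits on $P \backslash \GL_{2n}(F)$ and exhibits the ``matched'' open orbit whose stabilizer has reductive quotient $\Sp_{2n_1}(F) \times \cdots \times \Sp_{2n_r}(F)$; a direct modulus-character computation --- where evenness of each $2n_j$ is used --- shows that this orbit contributes $\Hom_{\prod_j \Sp_{2n_j}(F)}(\otimes_j \pi_j, \C)$ to $\Hom_{\Sp_{2n}(F)}(\pi, \C)$ by Frobenius reciprocity, which is nonzero by hypothesis.

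\emph{Necessity.} Conversely, suppose $\pi$ is distinguished by $\Sp_{2n}(F)$. Realising $\pi$ as the parabolic induction of its cuspidal support and again invoking the geometric lemma, $\Hom_{\Sp_{2n}(F)}(\pi,\C)$ is built from contributions of $\Sp_{2n}(F)$-orbits on the relevant flag variety, orbits parametrised by admissible involutions-with-multiplicities on the set of cuspidal segments; each orbit's contribution is governed by whether certain pieces of the Levi are distinguished by a symplectic group or by a $\GL$ embedded diagonally in $\GL \times \GL$. The key input is that a cuspidal representation of $\GL_{2m}(F)$ is never distinguished by $\Sp_{2m}(F)$ (Heumos--Rallis, re-proved in Section 4 of this paper), so the only surviving orbits force the Zelevinsky segments to pair off into blocks of even length; equivalently, the degenerate Whittaker datum of $\pi$ must be of the symplectic type $\mu_n$ of Corollary \ref{corollary2} and Proposition \ref{sln} --- the character with the middle simple root omitted --- and a short bookkeeping argument turns ``$\mu_n$ occurs'' into ``each $\St$-block of $\sigma_\pi$ has even dimension'', i.e. $\sigma_i = 0$ for odd $i$.

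\emph{Main obstacle.} The genuine difficulty, and the technical heart of [OS1], is the orbit analysis: classifying the $\Sp_{2n}(F)$-orbits on the pertinent (partial) flag varieties of $\GL_{2n}(F)$, computing stabilizers and modulus characters, and checking both that the matched orbit is open with genuinely nonzero contribution (sufficiency) and that every orbit compatible with an odd-length $\St$-block would force a cuspidal $\GL_{2m}(F)$ to be $\Sp_{2m}(F)$-distinguished, contradicting Heumos--Rallis (necessity). A secondary nuisance is bookkeeping the complementary-series, non-Speh unitary constituents and confirming irreducibility of the parabolic inductions in play, so that ``distinguished'' propagates cleanly through the inductive build-up.
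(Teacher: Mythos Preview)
The paper does not give its own proof of this theorem: it is stated with the attribution ``(Offen--Sayag)'' and the references [OS1], [OS2], and is then used as a black box to describe the shape of the relevant Langlands parameters. There is therefore nothing in the paper to compare your argument against.

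Your outline is a reasonable summary of the Offen--Sayag strategy, but one point of overreach should be flagged. You invoke ``the Speh-module proposition established above'' as supplying the symplectic period for even Speh modules. The proposition in the paper (the one verifying Conjecture~\ref{conj1} for $Sp_m(\pi)$) does not do this: it \emph{assumes} that $Sp_m(\pi)$ with $m$ even has a symplectic model (a fact taken from the literature) and identifies which $\SL_{2n}(F)$-constituent carries the invariant form. So the sufficiency direction still rests on the original Heumos--Rallis/Offen--Sayag input rather than on anything developed internally here. Likewise, the paper's Corollary~\ref{corollary2} and Proposition~\ref{sln} constrain the degenerate Whittaker support of a distinguished representation but do not by themselves force the segment pairing you need for necessity; that step genuinely requires the orbit analysis you flag as the main obstacle.
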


It follows that the Langlands parameters of representations of $\GL_{2n}(F)$ with symplectic period have the shape:
$$\sigma_\pi = \sum_i \sigma_i \otimes [\nu^{(2i-1)/2} + \nu^{(2i-3)/2} + \cdots + \nu^{-(2i-3)/2} + \nu^{-(2i-1)/2}],$$
where $\sigma_i$ are `tempered' parameters of $W'_F$.

Suppose now that we are considering representations of $\GL_{2n}(E)$ with symplectic period which arise by basechange from 
representations of $\U(n,n)(F)$. The Langlands parameter of such representations are conjugate-selfdual, and therefore in the
decomposition:
$$\sigma_\pi = \sum_i \sigma_i \otimes [\nu^{(2i-1)/2} + \nu^{(2i-3)/2} + \cdots + \nu^{-(2i-3)/2} + \nu^{-(2i-1)/2}],$$
the representations $\sigma_i$ of $W'_E$ are also conjugate-selfdual.

By the calculation done in [GGP], the component group of such parameters of $\U(n,n)(F)$ are trivial, i.e., the $L$-packet of such representations 
of $\U(n,n)(F)$ consists of single elements (because of the presence of non-trivial powers of $\nu$ in $\sigma_i \otimes \nu^{j/2}$ which appear in $\sigma_{\pi}$, 
none of these can be conjugate-selfdual). We note this as a proposition.

\begin{proposition}
For $F$ a local field,  let $\{\pi\}$ be an $L$-packet of  irreducible admissible representations of $\U(n,n)(F)$ 
which we assume to be the $L$-packet associated to an Arthur packet on $\U(n,n)(F)$. Then 
if under basechange, the representation $BC(\pi)$ of $\GL_{2n}(E)$ is distinguished by $\Sp_{2n}(E)$, the $L$-packet $\{\pi\}$ 
must consist of a single member.
\end{proposition}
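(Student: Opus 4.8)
The plan is to read the Langlands parameter of $BC(\pi)$ off Theorem \ref{OS}, and then quote the component-group computation of [GGP] recalled in the paragraph preceding the proposition.

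First I would apply Theorem \ref{OS} over the field $E$. Since $BC(\pi)$ is an irreducible admissible unitary representation of $\GL_{2n}(E)$ distinguished by $\Sp_{2n}(E)$, its Arthur parameter $\sigma = \sum_j \sigma_j \otimes \St_j$, with the $\sigma_j$ bounded representations of $W'_E$, satisfies $\sigma_j = 0$ for every odd $j$. Passing from the Arthur parameter to the Langlands parameter via the map $\iota$ exactly as in the discussion above, the Langlands parameter $\phi$ of $BC(\pi)$ takes the form
$$\phi \;=\; \sum_{i \ge 1}\ \sigma_i \otimes \big[\,\nu^{(2i-1)/2} + \nu^{(2i-3)/2} + \cdots + \nu^{-(2i-1)/2}\,\big],$$
with the $\sigma_i$ tempered; the feature I want to retain is that \emph{every} power of $\nu$ occurring is a nonzero (half-integral) power, since all the surviving $\St$-factors have dimension $\ge 2$.

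Next I would use that $BC(\pi)$ is obtained by stable base change from $\U(n,n)(F)$, so $\phi$ is conjugate-selfdual and each $\sigma_i$ is a conjugate-selfdual representation of $W'_E$. Decomposing the $\sigma_i$ into irreducibles, the irreducible constituents of $\phi$ are precisely the representations $\rho = \tau\otimes\nu^m$ with $\tau$ an irreducible constituent of some $\sigma_i$ and $m$ one of the nonzero half-integers above. I would then observe that no such $\rho$ is conjugate-selfdual: the character $\nu$, being the restriction to $W_E$ of a character of $W_F$, is $\Gal(E/F)$-invariant, so $\rho^{\vee c} \cong \tau^{\vee c}\otimes\nu^{-m}$; comparing the absolute values of the determinants and using that $\tau$ (a constituent of a tempered parameter) is unitary, an isomorphism $\rho \cong \rho^{\vee c}$ would force $\nu^{2m}$ to be trivial, which is impossible for $m \ne 0$ since $\nu$ has infinite order. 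This comparison is the only genuine computation, and it is routine.

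Finally, I would invoke the parametrization of the $L$-packet of $\U(n,n)(F)$ attached to $\phi$ (Mok, or Kaletha--Minguez--Shin--White, as used in [GGP]): the component group $S_\phi$ is built from those irreducible constituents of $\phi$ that are conjugate-selfdual of the type of ${}^L\U_{2n}$, of which by the previous step there are none, so $S_\phi$ is trivial and the packet $\{\pi\}$ is a singleton. I expect the only real obstacle — should one want a self-contained argument rather than a citation — to be exactly this component-group bookkeeping: fixing the conjugate-orthogonal versus conjugate-symplectic convention attached to $\U_{2n}$, and keeping track of the passage between $W'_E$, $W'_F$ and ${}^L\U_{2n}$. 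But this is precisely the calculation of [GGP] recalled above, so once the shape of $\phi$ is established the proposition follows formally.
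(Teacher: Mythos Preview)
Your proposal is correct and follows essentially the same route as the paper: apply Offen--Sayag to read off the shape of the Langlands parameter, note that base change forces conjugate-selfduality, observe that every irreducible constituent carries a genuinely nonzero power of $\nu$ and hence cannot itself be conjugate-selfdual, and conclude via the [GGP] component-group description that the packet is a singleton. You have simply written out in more detail the step the paper compresses into the parenthetical ``because of the presence of non-trivial powers of $\nu$ \ldots\ none of these can be conjugate-selfdual''; your determinant/absolute-value comparison is exactly the computation that justifies this.
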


In the rest of this section, we indicate how our work in this paper is in conformity with Conjecture 3 
in the case of $\U(2,2)$.

Recall that the $L$-group of the quasi-split group $\SO(4,2)$ over $F$ which is split by the quadratic extension $E$ of $F$ can be taken to be $\OO(6,\C)$,
such that a Langlands parameter for $\SO(4,2)$ consists of an admissible homomorphism $\sigma: W'_F \longrightarrow \OO(6,\C)$ with $\det \sigma = \omega_{E/F}$,
the quadratic character of $F^\times$ associated by classfield theory to the extension $E/F$.

It follows from the formalism of theta lifts that if the Langlands parameter of the representation $\pi$ of $\SL_2(F)$ 
is $\sigma_\pi: W'_F\rightarrow \PGL_2(\C) = \SO(3,\C)$, 
the
Langlands parameter of the representation $\theta(\pi)$ of $\SO(4,2)$ is the following representation of $W'_F$:
\begin{eqnarray}  \omega_{E/F}\sigma_\pi 
+\St_3,  \end{eqnarray}
where we have denoted by $\St_3$ the 3-dimensional representation of $W_F$ which is $[\nu^{-1} + 1 + \nu]$ (thus 
the present $\St_3$ is what would be denoted earlier by $\St_3 \circ \iota$).


On the other hand, for a conjugate-symplectic parameter $\lambda: W'_E \rightarrow \GL_4(\C)$, arising from a representation of $\U_4(F)$,
$\det(\lambda)^{-1/2} \Lambda^2(\lambda)$ is a 6-dimensional
representation with values in $\OO_6(\C)$, where $\det(\lambda)^{1/2}$ 
is a character of $W_E$ whose square is $\det(\lambda)$, and the square root  must exist if the representation of $\U_4(F)$ can be related to one of $\SO_6(F)$ (since there is a homomorphism from $\SU_4(F)$ to $\SO_6(F)$ with kernel $\pm 1 \subset \SU_4(F)$, 
only those representations of $\SU_4(F)$ descend to representations of $\SO_6(F)$ which are trivial on $\pm 1 \subset \SU_4(F)$). 

Via the isogeny from $\SU(2,2)$ to $\SO(4,2)$ of section 6.2, the correspondence of 
representations of $\U(2,2)$ and $\SO(4,2)$ associates to a  conjugate-symplectic parameter $\lambda: W'_E \rightarrow \GL_4(\C)$, 
arising from a representation of $\U_4(F)$, a parameter $W'_F \rightarrow \OO_6(\C)$ associated to a representation of $\SO(4,2)(F)$, 
whose basechange to $E$ is 
$\det(\lambda)^{-1/2} \Lambda^2(\lambda) : W'_E \rightarrow \OO_6(\C)$. 

Note that if $\lambda = \sigma \otimes \St_2$ is a conjugate-symplectic representation of $W'_E$ 
(the only non-trivial option allowed by the theorem of Offen-Sayag which we are applying after basechanging 
the representation of $\U(2,2)(F)$ to $\GL_4(E)$), 
then,
$$\Lambda^2(\sigma \otimes \St_2) = \Lambda^2(\sigma) \otimes {\rm Sym}^2(\St_2) +  {\rm Sym}^2(\sigma) \otimes \Lambda^2(\St_2) = \det(\sigma)\St_3+ {\rm Sym}^2(\sigma).$$ 

Since $\det(\lambda)= \det( \sigma \otimes \St_2) = \det(\sigma)^2$, we can take $\det(\lambda)^{1/2}=\det(\sigma)$, and hence, 
\begin{eqnarray}\det(\lambda)^{-1/2}\Lambda^2(\sigma \otimes \St_2) = \St_3+ 
(\det{\sigma})^{-1}{\rm Sym}^2(\sigma) . \end{eqnarray}

Since $\lambda = \sigma \otimes \St_2$ is a conjugate-symplectic representation of $W'_E$, 
$\sigma$  must be a conjugate-orthogonal representation of $W'_E$ which 
by Proposition 6.1 of [GGP2] 
arises (up to a twist by a character of $E^\times$) as basechange 
of a representation of $W'_F$ and the representation $(\det{\sigma})^{-1}{\rm Sym}^2(\sigma)$ extends to a representation of $W'_F$ with values in $\OO(3,\C)$ which by equation $(2)$ must be $\omega_{E/F}\sigma_\pi$.

To conclude --- admitted without all details --- theta lift of representations of $\SL_2(F)$ to $\SO(4,2)(F)$ have parameters which 
are as in the Offen-Sayag theorem, and that conversely, Offen-Sayag parameters come from theta lifts from $\SL_2(F)$.

\vspace{1cm}

\noindent{\bf Acknowledgement:}
There is the work of Mitra-Offen  done simultaneously and independently of us and which appeared in [MO] which also proves that there
are no cuspidal representations   of $\U_{2n}(F)$ distinguished by $\Sp_{2n}(F)$.

The second author held Jean Morlet Chaire at CIRM,  Luminy in 2016 where this work was
 begun. He would like to thank CIRM and its very friendly \'equipe for creating a very
conducive academic environment for work.

The  work of the second author is supported by a  grant of the Government of the Russian
Federation for the state support of scientific research carried out
under the supervision of leading scientists, agreement 14.W03.31.0030 dated 15.02.2018.

\end{document}